\def\e{{\epsilon}}
\def\norm#1{\|#1\|} 
\definecolor{cred}{RGB}{139,37,0}
\definecolor{cblue}{rgb}{0.0, 0.45, 0.73}
\definecolor{cgreen}{rgb}{0.2, 0.8, 0.2}
\definecolor{cyellow}{rgb}{1.0, 0.97, 0.0}
\definecolor{corange}{rgb}{1.0, 0.8, 0.0}
\def\genbox#1#2#3#4#5#6{
    \leavevmode\raise#4bp\hbox to#5bp{\vrule height#5bp depth0bp width0bp
    \pdfliteral{q .5 w \csname #2COLOR\endcsname\space RG
                       \csname #3PDF\endcsname{#5}{#6} S Q
             \ifx1#1 q \csname #2COLOR\endcsname\space rg 
                       \csname #3PDF\endcsname{#5}{#6} f Q\fi}\hss}}
\def\circbox    #1#2{\genbox{#1}{#2}  {circ}     {0}   {5}    {2.5}}
\newcommand{\beq} {\begin{equation}}
\newcommand{\eeq} {\end{equation}}
\newcommand{\bdm} {\begin{displaymath}}
\newcommand{\edm} {\end{displaymath}}
\newcommand{\bit}{\begin{itemize}}
\newcommand{\eit}{\end{itemize}}
\newcommand{\bde}{\begin{description}}
\newcommand{\ede}{\end{description}}
\newcommand{\bce}{\begin{center}}
\newcommand{\ece}{\end{center}}
\newcommand{\ben} {\begin{enumerate}}
\newcommand{\een} {\end{enumerate}}
\newcommand{\bea} {\begin{eqnarray}}
\newcommand{\eea} {\end{eqnarray}}
\newcommand{\barr} {\begin{array}}
\newcommand{\earr} {\end{array}}
\newcommand{\bean} {\begin{eqnarray*}}
\newcommand{\eean} {\end{eqnarray*}}
\newcommand{\edoc} {





\newcommand{\project}[2]{{\mathbb{P}^{#1} \left( #2 \right)}}
\newcommand{\interp}[2]{{\mathbb{I}^{#1} \left( #2 \right)}}
\newcommand{\interpN}[1]{{\interp{N}{#1}}}
\newcommand{\interpNm}[1]{{\interp{\Nm}{#1}}}
 
\newcommand{\utilde}{{\tilde u}}
\newcommand{\Kstar}{{{\K}^{\star}}}
\newcommand{\oneb}{{\bf 1}}
\newcommand{\Jmat}{{ \bf J}}
\newcommand{\Mmat}{{ \bf M}}
\newcommand{\Smat}{{ \bf S}}
\newcommand{\Umat}{{ \bf U}}
\newcommand{\Dmat}{{ \bf D}}
\newcommand{\Hmat}{{ \bf H}}
\newcommand{\Qmat}{{\bf{Q}}}
\newcommand{\Rmat}{{\bf{R}}}
\newcommand{\RmatInv}{{\bf{R}^{-1}}}
\newcommand{\MassMatrix}{{ \bf M}}
\newcommand{\InvMassMatrix}{{ \bf M}^{-1}}

\newcommand{\ones}{{\bf{1}}} 
\newcommand{\fhat}{{\widehat{\f}}} 
\newcommand{\fbhat}{{\widehat{\fb}}} 

\newcommand{\enfunc}{{\mc{S}}}
\newcommand{\enflux}{{\mc{F}}}
\newcommand{\enp}{{\psi}}
\newcommand{\env}{{\varphi}}

\newcommand{\Nbh}{ {\EuScript{N} } }
\newcommand{\envbs}{ {{\bs \varphi}^{\LRc{S}}} }
\newcommand{\envbf}{ {{\bs \varphi}^{\LRc{F}}} }
\newcommand{\gams}{{\gamma^{\LRc{S}}}} 
\newcommand{\gamf}{{\gamma^{\LRc{F}}}} 
\newcommand{\Rbs}{\Rb^{\LRc{S}}} 
\newcommand{\Rbf}{\Rb^{\LRc{F}}} 
\newcommand{\fbs}{{{\bf f}^{\LRc{S}}}} 
\newcommand{\fbf}{{{\bf f}^{\LRc{F}}}}
\newcommand{\qbs}{\qb^{\LRc{S}}}
\newcommand{\qbf}{\qb^{\LRc{F}}}
\newcommand{\Qbs}{\Qb^{\LRc{S}}}
\newcommand{\Qbf}{\Qb^{\LRc{F}}}
\newcommand{\af}{{a^{\LRc{F}}}}
\newcommand{\as}{{a^{\LRc{S}}}}
\newcommand{\atf}{{\tilde{a}^{\LRc{F}}}}
\newcommand{\ats}{{\tilde{a}^{\LRc{S}}}}
\newcommand{\ccf}{{{c}^{\LRc{F}}}}
\newcommand{\ccs}{{{c}^{\LRc{S}}}}
\newcommand{\ctf}{{\tilde{c}^{\LRc{F}}}}
\newcommand{\cts}{{\tilde{c}^{\LRc{S}}}}
\newcommand{\ssf}{{s^{\LRc{F}}}}
\newcommand{\sss}{{s^{\LRc{S}}}}
\newcommand{\bbf}{{b^{\LRc{F}}}}
\newcommand{\bbs}{{b^{\LRc{S}}}}

\newcommand{\averageM}[1]{\ensuremath{\LRc{\hspace{-2pt}\LRc{#1}\hspace{-2pt}}}}

\newcommand{\tb}{{\bf t}}

\newcommand{\Ne}[1]{{N_{E_#1}}}
\newcommand{\ephih}{{\veps^h_\phi}}
\newcommand{\ephiI}{{\veps^I_\phi}}
\newcommand{\ephihh}{{\veps^h_{\phih}}}
\newcommand{\ephihI}{{\veps^I_{\phih}}}
\newcommand{\evelh}{{\bs{\veps}^h_{\vel}}}
\newcommand{\evelhh}{{\bs{\veps}^h_{\Ubh}}}
\newcommand{\evelI}{{\bs{\veps}^I_{\vel}}}
\newcommand{\evelhI}{{\bs{\veps}^I_{\Ubh}}}
\newcommand{\Proj}{{\mathbb{P}}}

\newcommand{\rhoAn}[2]{{ (\rho )^{#2}_{1,{#1}} }}
\newcommand{\rhoBn}[2]{{ (\rho )^{#2}_{2,{#1}} }}
\newcommand{\rhouAn}[2]{{ (\rho u_1)^{#2}_{({#1})} }}
\newcommand{\rhouBn}[2]{{ (\rho u_2)^{#2}_{({#1})} }}
\newcommand{\rhowAn}[2]{{ (\rho w_1)^{#2}_{({#1})} }}
\newcommand{\rhowBn}[2]{{ (\rho w_2)^{#2}_{({#1})} }}
\newcommand{\rhousA}[1]{{ (\rho u)^*_{1,{#1}} }}
\newcommand{\rhousB}[1]{{ (\rho u)^*_{2,{#1}} }}
\newcommand{\rhouA}[1]{{ (\rho u)_{1,{#1}} }}
\newcommand{\rhouB}[1]{{ (\rho u)_{2,{#1}} }}
\newcommand{\rhoA}[1]{{ \rho_{1_{,#1}} }}
\newcommand{\rhoB}[1]{{ \rho_{2_{,#1}} }}
\newcommand{\qhfluxA}[1]{{ \hat{q}_{1_{,#1}} }}
\newcommand{\qhfluxB}[1]{{ \hat{q}_{2_{,#1}} }}
\newcommand{\qfluxA}[1]{{ q_{1_{,#1}} }}
\newcommand{\qfluxB}[1]{{ q_{2_{,#1}} }}
\newcommand{\tempA}[1]{{ \temp_{1_{,#1}} }}
\newcommand{\tempB}[1]{{ \temp_{2_{,#1}} }}
\newcommand{\tempAn}[2]{{ \temp^{#2}_{1_{,#1}} }}
\newcommand{\tempBn}[2]{{ \temp^{#2}_{2_{,#1}} }}
\newcommand{\zerob}{{\bf 0 }}
\newcommand{\Nze}[1]{{ N_{ze_{#1}} }}
\newcommand{\Nxe}[1]{{ N_{xe_{#1}} }}
\newcommand{\Nye}[1]{{ N_{ye_{#1}} }}
\newcommand{\lrhsb}{{\bf L }}
\newcommand{\rhsb}{{\bf R }}
\newcommand{\tempb}{{\bf \temp}}
\newcommand{\jmhalf}{{j-\half}}
\newcommand{\jphalf}{{j+\half}}
\newcommand{\jp}{{i+1}}
\newcommand{\jm}{{i-1}}
\newcommand{\imhalf}{{i-\half}}
\newcommand{\iphalf}{{i+\half}}
\newcommand{\ip}{{i+1}}
\newcommand{\im}{{i-1}}
\newcommand{\dz}{{\triangle z}}
\newcommand{\temp}{{u}}
\newcommand{\Uhat}{\hat{\U}}
\newcommand{\Vhat}{\hat{\V}}
\newcommand{\What}{\hat{\W}}
\newcommand{\pOmegah}{{\pOmega_h}}
\newcommand{\Omegah}{{\Omega_h}}
\newcommand{\Omegat}{{\Omega_t}}
\newcommand{\Vscript}{\mathscr{V}}
\newcommand{\vtest}{{\bf v}}
\newcommand{\vhtest}{{\hat{\bf v}}}
\newcommand{\Np} {\ensuremath{{N_\text{p}}}}
\newcommand{\qbh}{\hat{\mb{\q}}}
\newcommand{\tila}{\tilde{a}}
\newcommand{\tilb}{\tilde{b}}
\newcommand{\tilc}{\tilde{c}}
\newcommand{\xb}{{\bf x}}
\newcommand{\hm}{h^-}
\newcommand{\hp}{h^+}
\newcommand{\s}{s}
\newcommand{\GammaD}{\Gamma_D}
\newcommand{\cGammaD}{\overline{\Gamma}_D}
\newcommand{\GammaN}{\Gamma_N}
\newcommand{\cGammaN}{\overline{\Gamma}_N}
\newcommand{\Fb}{{\bf F}}
\newcommand{\FbN}{ {{\bf F}_{ \mc{N} } } }
\newcommand{\Fbs}{{{\bf F}^*}}
\newcommand{\Fbh}{{\hat{\bf F}}}
\newcommand{\Gbs}{{{\bf G}^*}}
\newcommand{\Gbh}{{\hat{\bf G}}}
\newcommand{\Ub}{{\bf U}}
\newcommand{\Ubh}{\hat{\Ub}}
\newcommand{\Ube}{\Ub^e}
\newcommand{\nb}{{\bf n}}
\newcommand{\U}{U}
\newcommand{\W}{W}
\newcommand{\Scal}{\eta}
\newcommand{\Fcal}{\mathcal{F}}
\newcommand{\Fcalh}{\hat{\mathcal{F}}}
\newcommand{\Fcals}{\mathcal{F}^*}
\newcommand{\Acal}{\mathcal{A}}
\newcommand{\rvec}{{ \bf \hat{r}}}
\newcommand{\dtt}{\triangle t}
\newcommand{\Qb}{{\bf Q}}
\newcommand{\Qbi}{{\bf Q}^{(i)}}
\newcommand{\Qbip}{{\bf Q}^{(i)}_+}
\newcommand{\Qbim}{{\bf Q}^{(i)}_-}
\newcommand{\Qbj}{{\bf Q}^{(j)}}
\newcommand{\Qbh}{\hat{{\bf Q}}} 
\newcommand{\Qbhi}{\hat{{\bf Q}}^{(i)}}
\newcommand{\Qbhj}{\hat{{\bf Q}}^{(j)}}
\newcommand{\dQbh}{\delta\hat{{\bf Q}}} 
\newcommand{\dQb}{{\delta\bf Q}}
\newcommand{\Lcal}{\mathcal{L}}
\newcommand{\NLcal}{\mathcal{NL}}
\newcommand{\Ncal}{\mathcal{N}}

\newcommand{\ab}{{\bf a}}
\newcommand{\bb}{{\bf b}}
\newcommand{\Res}{{\mathcal Res}}
\newcommand{\Flx}{{\mathcal Flx}}
\newcommand{\tauh}{\hat{\tau}}
\newcommand{\ulon}{\u_\lambda}
\newcommand{\ulat}{\u_\theta}
\newcommand{\uinf}{\u_\infty}
\newcommand{\mass}{\text{mass}}
\newcommand{\energy}{\text{energy}}
\newcommand{\Uhatb}{\hat{\mb{U}}}
\newcommand{\phibar}{{\Phi}}
\newcommand{\Ubar}{\overline{U}}
\newcommand{\phin}{\phi^n}
\newcommand{\phinp}{\phi^{n+1}}
\newcommand{\phihat}{{\hat{\phi}}}
\newcommand{\psin}{\psi^{n}}
\newcommand{\psinp}{\psi^{n+1}}
\newcommand{\psihat}{{\hat{\psi}}}
\newcommand{\Unp}{\U^{n+1}}
\newcommand{\Un}{\U^{n}}
\newcommand{\Uhatn}{\Uhat^{n}}
\newcommand{\Uhatnp}{\Uhat^{n+1}}
\newcommand{\Ustarn}{{\U^{*n}}}
\newcommand{\Ulstarn}{{\U_L^{*n}}}

\newcommand{\dt}{{\triangle t}}
\newcommand{\dx}{{\triangle x}}
\newcommand{\dy}{{\triangle y}}

\newcommand{\kkb}{\mathbf{\kappa}}
\newcommand{\kb}{{\bf k}}
\newcommand{\Thetap}{{\Theta^\prime}}
\newcommand{\Thetahat}{\hat{\Theta}}
\newcommand{\Thetahatp}{\hat{\Theta}^\prime}
\newcommand{\Thetabar}{\bar{\Theta}}
\newcommand{\thetap}{{\theta^\prime}}
\newcommand{\thetabar}{\bar{\theta}}
\newcommand{\presp}{{p^\prime}}
\newcommand{\presbar}{\bar{p}}

\newcommand{\rhohatp}{\hat{\rho}^\prime}
\newcommand{\rhow}{\rho w}
\newcommand{\rhot}{\rho \theta}
\newcommand{\rhop}{{\rho^\prime}}
\newcommand{\rhobar}{\bar{\rho}}
\newcommand{\uref}{{u_{0}}}
\newcommand{\xref}{{x_{0}}}
\newcommand{\pref}{{p_{0}}}
\newcommand{\tref}{{t_{0}}}
\newcommand{\Mref}{{M_{0}}}
\newcommand{\Frref}{{Fr_{0}}}
\newcommand{\Reref}{{Re_{0}}}
\newcommand{\rhoref}{{\rho_{0}}}

\newcommand{\Lambdamat}{{\boldsymbol{\Lambda}}}
\newcommand{\pres}{{{p}}}
\newcommand{\udotn}{{{\ub\cdot \nb}}}
\newcommand{\rhoInv}{{{\rho^{-1}}}}
\newcommand{\aInv}{{{a^{-1}}}}
\newcommand{\mgamma}{{{\tilde{\gamma} }}}
\newcommand{\enthalpy}{{{H}}}
\newcommand{\quarter}{{{\frac{1}{4}}}}
\newcommand{\ienergy}{{{e}}} 
\newcommand{\tenergy}{{{E}}} 
\newcommand{\ddx}[1]{{{  \LRp{#1}_{,x}   }}}
\newcommand{\qbold}{{\bf{q}}}

\newcommand{\Ical}{\mathcal{I}}

\newcommand{\Nb}{{\bf N }}
\newcommand{\Kcal}{{\mc{K}}}
\newcommand{\Rgas}{{R}}
\newcommand{\Temper}{{T}}
\newcommand{\at}{\tilde{a}}
\newcommand{\Ht}{\tilde{H}}
\newcommand{\nx}{{n_x}}
\newcommand{\ny}{{n_y}}
\newcommand{\nz}{{n_z}}
\newcommand{\tx}{{t_x}}
\newcommand{\ty}{{t_y}}
\newcommand{\tz}{{t_z}}
\newcommand{\sx}{{s_x}}
\newcommand{\sy}{{s_y}}
\newcommand{\sz}{{s_z}}

\newcommand{\gammam}{{\tilde{\gamma}}}
\newcommand{\mm}{\LRp{\frac{\gamma-1}{a^2}}}

\newtheorem{theorem}{Theorem}[section]
\newtheorem{corollary}{Corollary}[theorem]
\newtheorem{proposition}[theorem]{Proposition}

\newtheorem{remark}{Remark}

\newcommandx{\question}[2][1=]{\todo[linecolor=red,backgroundcolor=red!25,bordercolor=red,#1]{#2}}
\newcommandx{\change}[2][1=]{\todo[linecolor=blue,backgroundcolor=blue!25,bordercolor=blue,#1]{#2}}
\newcommandx{\add}[2][1=]{\todo[linecolor=OliveGreen,backgroundcolor=OliveGreen!25,bordercolor=OliveGreen,#1]{#2}}
\newcommandx{\improve}[2][1=]{\todo[linecolor=orange,backgroundcolor=orange!25,bordercolor=orange,#1]{#2}}
\newcommandx{\remove}[2][1=]{\todo[linecolor=yelllow,backgroundcolor=yellow!10,bordercolor=red,#1]{#2}}
%


\newcommand{\Emil}[1]{{\color{blue}Emil: #1}}
\newcommand{\Hong}[1]{{\color{red}Hong: #1}}
\newcommand{\Shinhoo}[1]{{\color{green}Shinhoo: #1}}
\newcommand{\Rob}[1]{{\color{orange}Rob: #1}}

\bibliographystyle{elsarticle-num}

\begin{document}

\begin{frontmatter}

\title{Entropy-Preserving and Entropy-Stable Relaxation IMEX and Multirate Time-Stepping Methods}
\author[Argonne]{Shinhoo Kang\corref{mycorrespondingauthor}}
\cortext[mycorrespondingauthor]{Corresponding author}
\ead{shinhoo.kang@anl.gov}
\author[Argonne]{Emil M. Constantinescu}
\ead{emconsta@anl.gov}

\address[Argonne]{Mathematics and Computer Science Division, Argonne National Laboratory, Lemont, IL, USA}

\begin{abstract}

We propose entropy-preserving and entropy-stable partitioned Runge--Kutta (RK) methods.
In particular, we extend the explicit relaxation Runge--Kutta methods 
to IMEX--RK methods and a class of explicit second-order multirate methods for stiff problems 
arising from scale-separable or grid-induced stiffness in a system.
The proposed approaches not only mitigate system stiffness
 but also fully support entropy-preserving and entropy-stability properties at a discrete level.
The key idea of the relaxation approach is to adjust the step completion with 
a relaxation parameter so that the time-adjusted solution satisfies the entropy condition at a discrete level. 
The relaxation parameter is computed by solving a scalar nonlinear equation at each timestep in general;
however, as for a quadratic entropy function, 
we theoretically derive the explicit form of the relaxation parameter and numerically confirm that the relaxation parameter works the Burgers equation. 
Several numerical results for ordinary differential equations and the Burgers equation 
are presented to demonstrate the entropy-conserving/stable behavior of these methods. 
We also compare the relaxation approach and the incremental direction technique for the Burgers equation with and without a limiter in the presence of shocks.
  
\end{abstract}

\begin{keyword}
entropy conservation/stability \sep discontinuous Galerkin \sep implicit-explicit \sep multirate integrator \sep Burgers equation 
\end{keyword}

\end{frontmatter}


\section{Introduction}


High-order methods for solving partial differential equations are popular because of their high-order accuracy and low numerical dissipation and dispersion errors, compared with low-order schemes \cite{ainsworth2004dispersive}.  
In terms of numerical robustness, however,
the low-order schemes are still an attractive choice for computational fluid dynamics 
because they are less prone to numerical instability than are high-order methods~\cite{chan2018discretely}. 
To this end, 
further stabilization techniques 
such as artificial viscosity, slope limiting, or filtering are needed in the vicinity of shocks or underresolved features. 

The entropy-conserving and entropy-stable methods are an alternative way to improve robustness
by satisfying the entropy condition at a discrete level. 
Tadmor~\cite{tadmor1987numerical} proposed entropy-conservative/stable finite-volume schemes,
which are extended to high-order methods \cite{jiang1994cell,nordstrom2005well,fjordholm2012arbitrarily,fisher2013high,gassner2013skew,carpenter2014entropy,fernandez2014generalized} with  
two important tools: the summation by parts (SBP) operator and flux-differencing techniques.
\footnote{
  The former mimics the integration by parts at a discrete level, 
   and the latter unveil the mechanism underlying the skew-symmetric formulation.
    The split forms consist of both conservative and nonconservative forms of equations such that 
    the aliasing errors caused by the volume integral terms become minimized.
}
In particular, 
several entropy-stable discontinuous Galerkin (DG) methods have been developed
with collocated points  on quadrilateral and hexagonal meshes~\cite{gassner2016split,wintermeyer2017entropy},  
on triangular meshes~\cite{chen2017entropy}, 
and with general points~\cite{chan2018discretely}
by a hybridized SBP operator. 

From a time discretization perspective, 
Nordstr{\"o}m and Lundquist in \cite{nordstrom2013summation} proposed SBP-based implicit time integrators to have fully discrete entropy-stable schemes.
The work in \cite{boom2015high,ranocha2021new} incorporated SBP in implicit Runge--Kutta (RK) methods. 
Friedrich et al.~\cite{friedrich2019entropy} proposed entropy-stable space-time methods. 
For entropy-stable explicit time integrators, 
Ketcheson~\cite{ketcheson2019relaxation} modified the step completion in 
standard Runge--Kutta methods to guarantee 
the square entropy conservation or stability, namely, $L^2$ stability, which are referred to as relaxation methods.
\footnote{
  Classical explicit RK or linear multistep methods cannot preserve general quadratic invariants \cite{ketcheson2019relaxation}.
}

The relaxation idea stems from the earlier works of Sanz-Serna and Manoranjan~\cite{sanz1982explicit,sanz1983method}, which modified the time step size of the Leapfrog scheme for the {\it Korteweg--de Vries} equation
and nonlinear {\it Schr\"{o}dinger} equations such that the quadratic invariant is conserved at a fully discrete level.  
Ketcheson \cite{ketcheson2019relaxation} revisited this relaxation idea and developed 
relaxation Runge--Kutta methods that guarantee conservation or stability for any inner-product norm. 
Relaxation methods have been further extended to 
the multistep methods \cite{ranocha2020general} and  deferred correction methods \cite{abgrall2021relaxation}  
and studied for Hamiltonian problems \cite{ranocha2020relaxation}, 
compressible Euler, and Navier--Stokes equations \cite{ranocha2020relaxationEuler}.
    
Inspired by the work in \cite{ketcheson2019relaxation}, 
we propose the relaxation methods for partitioned RK methods to tackle stiff problems.
Specifically, we extend the relaxation RK methods to IMEX Runge--Kutta (IMEX RK) 
and the second--order multirate Runge--Kutta (MRK2) method \cite{constantinescu2007multirate}.
Chemical kinetics \cite{stone2013techniques}, 
biochemical reactions \cite{komori2014stochastic},
electrical circuits \cite{bartel2002multirate}, and 
fluid mechanics \cite{kang2019imex} are all examples of  
stiff problems in many engineering and scientific applications.
Partitioned Runge--Kutta (RK) methods define a class of integrators that use different time-stepping algorithms for different problem components. The aim of these methods is to avoid a monolithic algorithm when the problem at hand has components with different dynamical properties, which may require suitable treatment for computational efficiency. Two of the most popular partitioned RK methods are implicit-explicit (IMEX)~\cite{ascher1997implicit} and multirate~\cite{constantinescu2007multirate}.  

IMEX schemes are widely used 
in multiscale problems including atmospheric~\cite{giraldo2010semi,gardner2018implicit}, ocean~\cite{newman2016communication}, 
sea-ice~\cite{lemieux2014second}, shallow-water~\cite{kang2019imex}, and wind turbine models~\cite{streiner2008coupled} and in plasma simulations~\cite{miller2019imex}. 
By treating the fastest waves implicitly, 
IMEX methods overcome the stringent time step size of explicit methods and simplify the fully implicit system solves by using an explicit integrator for the nonstiff components. 
IMEX methods can also handle geometric-induced stiffness arising from mesh refinement 
by treating the fine-grid solution implicitly~\cite{kanevsky2007application}. 
Similarly, 
multirate time integrators are a good candidate to tackle the stiffness issues.
In multirate methods, an original problem is split into several subproblems, 
 allowing different time step sizes on each subproblem. 
\footnote{
  In IMEX methods, the same time step size is used for both sitff and nonstiff parts. 
}
Multirate methods are used 
in various applications such as atmospheric~\cite{SkamarockKlemp08,seny2013multirate} and 
air pollution models~\cite{schlegel2012implementation},
the Burgers equation \cite{constantinescu2007multirate},
Euler equations \cite{wensch2009multirate}, and 
compressible Navier--Stokes equations \cite{mikida2019multi}. 

Our proposed approaches not only alleviate the stiffness in a system 
but also provide entropy-preserving and entropy-stability properties at a fully discrete level. 
While the relaxation method is a straightforward step correction procedure,
users will benefit from having a formula for partitioned Runge--Kutta methods.
The presented methods could be a viable option to improve 
the robustness of stiff simulations.
In particular, our contributions in this paper are as follows.
\begin{enumerate}
  \item We derive the entropy-conserving/stable conditions for relaxation IMEX methods and provide an explicit relaxation expression for a class of IMEX methods. 
  \item We provide a similar result as above for partitioned multirate Runge--Kutta methods. 
  \item We demonstrate entropy stability and inner-product-based conservation on several numerical examples that employ IMEX methods for problems with stiff components and explicit multirate for problems with variable dynamical scales.
\end{enumerate}


This paper is organized as follows.
In Section \secref{model-problems} we describe the model problems and the one-dimensional entropy-conserving/stable discontinuous Galerkin spectral element method \cite{gassner2013skew}.
In Section \secref{ESTimeSplitting} we introduce the entropy-conserving/stable IMEX and MRK2 methods 
and provide a novel analysis for the relaxation parameters of the IMEX-RK and MRK2 methods. 
In Section \secref{NumericalResults} we demonstrate the total mass conservation and  
the entropy conservation/stability of the proposed methods through numerical examples. 
Specifically, for the Burgers equation, 
we compare the relaxation approach and the incremental direction technique 
with and without a limiter in the vicinity of a shock. 
In Section \secref{Conclusion} we present our conclusions. 
 

\subsection{Problem Statement}

Underresolved solutions cause aliasing errors, which can trigger numerical instability. This often happens when sharp gradient solutions are developed with insufficient spatial and temporal resolutions. 
 One idea to maintain stability is to conserve or bound a quantity called entropy at a discrete level, which is a convex functional of the solution. Moreover, some applications require quadratic invariants preservation. This is not possible by directly using methods with explicit partitions. Relaxation methods have been proposed for monolithic, that is, single-partitioned (explicit), methods to overcome this limitation. This study extends the relaxation concept to two different classes of partitioned Runge--Kutta methods.



\subsection{Model Problems and Spatial Discretization Methods}
\seclab{model-problems}

We introduce notation and model problems, along with a choice for the spatial discretization, making the presentation of the new time-stepping algorithms easier to follow.




\subsubsection{Ordinary Differential Equation: Conserved Exponential Entropy }

We consider 
the ordinary differential equation (ODE) example introduced in \cite{ranocha2020relaxationEuler}:
\begin{align}
  \eqnlab{ode-expo-entropy}
  \DD{}{t}
  \begin{pmatrix}
    q_1\\
    q_2
  \end{pmatrix}
  = 
  \begin{pmatrix}
    - \exp (q_2)\\
    \exp (q_1)
  \end{pmatrix}. 
\end{align}
This system preserves the exponential entropy of form
$$ \eta (q) = \exp (q_1) + \exp (q_2).$$

\subsubsection{Ordinary Differential Equation: Nonlinear Pendulum}

We also consider the nonlinear pendulum 
described by the first-order ODE system 
\begin{align}
  \eqnlab{ode-pendulum-entropy}
  \DD{}{t}
  \begin{pmatrix}
    q_1\\
    q_2
  \end{pmatrix}
  = 
  \begin{pmatrix}
    - \sin (q_2)\\
    q_1
  \end{pmatrix}\,,
\end{align} 
with initial condition $q=(1.5,0)^T$ and entropy function 
$\eta(q) =  0.5 q_1^2 -\cos (q_2)$.

\subsubsection{Partial Differential Equation: The Burgers Equation}

We consider the inviscid Burgers equation on the time and space interval $(t,x) \in [0,T]\times\Omega$:
\begin{align}
\eqnlab{pde-burgers-eq}
 \dd{q}{t} + \half\dd{q^2}{x} 
 = 0 \text{ in } [0,T]\times\Omega,
\end{align}
where $q$ is a scalar quantity and $\Omega\subset \R$ is the one-dimensional domain. 
When considering implicit-explicit methods, we will split the spatial operator in two  
by  defining a linearized flux $F_L$ of $F :=\half q^2$ by 
\begin{align*}
  F_L:= \tilde{q} q\,,
\end{align*}
which will be treated implicitly, and the remaining nonlinear flux $\Fcal_{\mc{N}}$ 
\[
  F_{\mc{N}} := \frac{q^2}{2} - \tilde{q}q\,,
\]
with a reference state $\tilde{q}$ (for example, $\tilde{q} = q_n$: the numerical solution at $t_n$), which will be treated explicitly. 
We can now write \eqnref{pde-burgers-eq} as the partitioned problem
\begin{align}
\eqnlab{pde-burgers-eq-split}
 \dd{q}{t} 
 + 
  \dd{}{x}\underbrace{ \LRp{ \tilde{q} q }
  }_{F_L}
   + 
   \dd{}{x}\underbrace{ \LRp{ \frac{q^2}{2} - \tilde{q}q } 
   }_{F_{\mc{N}}} = 0
  \text{ in } \Omega\,.
\end{align}

\subsubsection{Discontinuous Galerkin  Spatial Discretization}

We denote by $\Omegah := \cup_{\ell=1}^{N_E} K_{\ell}$ the mesh containing a finite
collection of non-overlapping elements, $K_{\ell}$, that partition $\Omega$,  
where $N_E$ is the total number of elements.
Let $\pOmega_h := \LRc{\pK:\K\in \Omegah}$ be the collection of the boundaries of all elements.
For two neighboring elements $\Kp$ and $\Km$ that share an interior
interface $\e = \Kp \cap \Km$, we denote by $q^\pm$ the trace of the
solutions on $\e$ from $\K^\pm$.  
We define $\nm$ as the unit outward normal vector on
the boundary $\pK^-$ of element $\Km$ and $\np = -\nm$ as the unit outward
normal of a neighboring element $\Kp$. 
On the interior interfaces $\e$
, we define the mean/average operator $\averageM{v}$,
where $v $ is 
a scalar quantity, by
$\averageM{v}:=\half\LRp{v^- + v^+}$,
and the jump operator 
$\jump{v}:= v^+ \nb^+ + v^- \nb^-$. 

Let ${\Poly^N} \LRp{D}$ denote the space of polynomials of degree at
most $N$ on a domain $D$. Next, we introduce the following discontinuous piecewise polynomial space as
\begin{align*}
\Vh\LRp{\Omega_h} &:= \LRc{v \in L^2\LRp{\Omega_h}:
  \eval{v}_{\K} \in \Poly^N\LRp{\K}, \forall \K \in \Omega_h},
\end{align*}
and similar spaces $\VhK$ by 
replacing $\Omega_h$ with $\K$.
We define $\LRp{\cdot,\cdot}_\K$ as the $L^2$-inner product on an
element $\K$, 
and $\LRa{\cdot,\cdot}_{\pK}$ as the
$L^2$-inner product on the element boundary $\pK$.
We also define the 
inner products as
$\LRp{\cdot,\cdot}_{\Omega_h} :=
\sum_{\K\in \Omega_h}\LRp{\cdot,\cdot}_\K$ and
$\LRa{\cdot,\cdot}_{\pOmega_h} :=
\sum_{\pK\in \pOmega_h}\LRa{\cdot,\cdot}_\pK$.
We define associated norms as 
$\norm{\cdot}:= \norm{\cdot}_{\Omegah}:= \LRp{ \sum_{K\in \Omegah} \norm{\cdot}_{K}^2 }^\half$ ,
where $\norm{\cdot}_{\K}=\LRp{\cdot,\cdot}_{\K}^\half$. 
 
The entropy-conserving/stable DG skew-symmetric formulation \cite{gassner2013skew} of \eqnref{pde-burgers-eq} is as follows:
Seek $q_h \in \Vh\LRp{\Omegah}$ such that 
  \begin{align}
     \eqnlab{gov-burgers1d-u}
   \LRp{\dd{q_h}{t},v}_\Omegah 
     &:= \LRa{\mc{R}(q_h),v},
  \end{align}
  where
  \begin{align*}
    \LRa{\mc{R}(q_h),v} &:= 
    \frac{2}{3} \LRp{\interpN{\frac{q_h^2}{2}},\dd{\v}{x}}_\Omegah \\
 &- \frac{1}{6}  \LRp{\LRp{\interpN{q_h\dd{q_h}{x}},\v}_\Omegah  -  \LRp{q_h,\dd{\interpN{q_h\v}}{x}}_\Omegah }
  - \LRa{\nb \widehat{ \frac{q_h^2}{2}}, \v}_\pOmegah
  \end{align*}
for all $v \in \Vh\LRp{\Omega_h}$. 
Here, $q_h$ is a polynomial approximation to $q$ on each element $K$;
that is, for $x \in K$, 
$
q \approx q_h := \sum_{j=0}^N q_j (t) \ell_j (x)
$
with nodal values of $u_j=u(x_j)$ and Lagrange basis function 
$\ell_j=\Pi_{k=0,k\ne j}^N \LRp{\frac{x - x_j}{x_k - x_j} }$
satisfying $\ell_j(x_i) = \delta_{ij}$ (for $j=0,\cdots, N$); 
$\interpN{}$ is the interpolation operator such that 
$ 
\interpN{f}:= \sum_{j=0}^N f(x_j) \ell_j(x)
$; and $ \fhat=\widehat{ \frac{q_h^2}{2}}$ is a numerical flux. 

For the semi-discrete entropy-conserving formulation, we take the entropy-conserving flux, 
$$\fhat_{EC} := \frac{1}{6}\LRp{(q_h^+)^2 + (q_h^-)^2 + q_h^- q_h^+};$$ and 
for the semi-discrete entropy-stable formulation, we use the Lax--Friedrichs flux, 
$$\fhat_{ES} := \averageM{\frac{q_h^2}{2} } + \frac{\tau}{2} \jump{q_h} $$ 
with $\tau:=\max (|q_h^+|,|q_h^-|)$. 
The Lax--Friedrichs flux with the skew-symmetric formulation yields the energy-stable DG method \cite{gassner2013skew}.

The split form of the energy-conserving/stable DG weak formulation of \eqnref{pde-burgers-eq-split} is as follows: seek $q_h \in \Vh\LRp{\Omegah}$ such that 
\begin{align}
 \eqnlab{gov-burgers1d-split}
 \LRp{\dd{q_h}{t},v}_\Omegah 
   &:= \LRa{Lq_h,v} + \LRa{\mc{N}(q_h),v},
\end{align}
where
\begin{align*}
  \LRa{Lq_h,v} &:= - \half\LRp{\interpN{\dd{\tilde{q}_h q_h }{x}},\v}_\Omegah
  + \half\LRp{\interpN{\tilde{q}_h q_h},\dd{\v}{x}}_\Omegah \\
  &- \LRa{\nb \LRp{\widehat{\tilde{q}_h q_h }-\half \tilde{q}_h q_h}, \v}_\pOmegah, \\
  \LRa{\mc{N}(q_h),v} &:=\LRa{\mc{R}(q_h),v}  - \LRa{Lq_h,v},
\end{align*}
for all $v \in \Vh\LRp{\Omega_h}$. We take 
$ \widehat{\tilde{q}_h q_h} := \average{\tilde{q}_h q_h}$ for the entropy-conserving flux and
$ \widehat{\tilde{q}_h q_h} := \average{\tilde{q}_h q_h} + \half \max(|\tilde{q}_h^\pm|)\jump{
  q_h}$ for the Lax--Friedrichs flux. 
  The reference state $\tilde{q}_h$ is taken as the elementwise mean value of $q_h$ at $t_n$, 
    so that $\tilde{q}_h$ becomes a constant on each element. 
%

\subsubsection{Flux Limiters}


Entropy-conserving/stable schemes are provably stable, 
but it is still not enough to eliminate high-frequency oscillations near a shock region.  
To control the Gibbs phenomenon, we employ 
the limiter proposed by \cite{chen2017entropy}. 
The idea is to construct a linear function based on 
the two modified 
left and right values, $\check{q}^{l}$ and $\check{q}^{r}$ for each element,  
\begin{subequations}
  \eqnlab{shock-limiter-burgers-cs17}
  \begin{align} 
    \check{q}^{l} & = \overline{q_h}_K - m\LRp{\overline{q_h}_K - q^{l},\overline{q_h}_{K+1} - \overline{q_h}_{K}, \overline{q_h}_{K} - \overline{q_h}_{K-1} },\\
    \check{q}^{r} & = \overline{q_h}_K + m\LRp{q^{r} - \overline{q_h}_K,\overline{q_h}_{K+1} - \overline{q_h}_{K}, \overline{q_h}_{K} - \overline{q_h}_{K-1} },\\
    \check{q} &= \overline{q_h}_K + (q_K-\overline{q_h}) \LRp{ \frac{\check{q}^{l} + \check{q}^{r} - 2 \overline{q_h}_K }{ q^{l} + q^{r}  - 2 \overline{q_h}_K  }  }, 
  \end{align}  
\end{subequations}
where $q^{l}:=q_h(x_0)$ and $q^{r}:=q_h(x_N)$ 
are the leftmost and the rightmost values on the $K$th element,  respectively; $\overline{q_h}_K$ is the mean value on the $K$th element; 
and 
$m$ is the minmod function defined by  
\begin{align*}
m(a,b,c) = \begin{cases} s \min (|a|,|b|,|c|), ~\textnormal{if}~ s=\sign{a}=\sign{b}=\sign{c}\\ 0, ~\textnormal{otherwise}~ \end{cases}.
\end{align*}
Once a solution is integrated by one time step, 
we apply the limiter to the updated solution as a postprocessing task.

\section{Entropy-Stable Time-Splitting Methods}
\seclab{ESTimeSplitting}
 
In this section we propose entropy-conserving/stable IMEX and multirate methods by using relaxation methods.

Given a scalar hyperbolic equation, 
\begin{align} 
\eqnlab{scala-hyperboliceq}
\dd{q}{t} + \dd{F}{x}= 0, 
\end{align}
where $x \in \Omega$ with $\Omega$ convex, we define a convex function $\Scal:\Omega \rightarrow \R $ called an entropy function if there exists the entropy flux $\Fcal$ satisfying $\dd{\Scal}{q} \dd{F}{x} = \dd{\Fcal}{x}$
and $\Fcal=\env F - \psi$. Here, $\env:=\dd{\Scal}{q}$ and $\psi$ are the entropy variable and potential flux, respectively.
We multiply the entropy variable to \eqnref{scala-hyperboliceq} and integrate it over the domain, arriving at 
 the tendency of the entropy function, 
 \begin{align*}
  \LRp{\dd{\Scal}{t},1}_\Omegah = - \LRa{\nb \Fcal,1}_\pOmegah.
\end{align*}
\footnote{Here, we have used a chain rule, 
$\dd{\Scal}{t} = \dd{\Scal}{q} \dd{q}{t} = \env \dd{q}{t}$. }
For a dissipative system, the entropy tendency should decrease: 
\begin{align}
\eqnlab{entropy-theory}
  \LRp{\dd{\Scal}{t},1}_\Omegah \le - \LRa{\nb \Fcal,1}_\pOmegah.
\end{align}
With periodic or compactly supported boundary conditions, the term on the right-hand side vanishes; hence, 
the semi-discrete entropy stability is guaranteed. 
For a fully discretized system, we expect 
$$
\LRp{\Scal (q_{h,n+1}),1}_\Omegah \le \LRp{\Scal(q_{h,n}),1}_\Omegah
$$
at a discrete level for a dissipative system; however, in practice 
the entropy condition is not guaranteed for all times. 
Here, $q_{h,n}$ and $q_{h,n+1}$ are approximations to $q$ at $t=t_n$ and $t=t_{n+1}$, respectively.
We will not include the subscript $\Omega_h$ in the inner product and $h$ in $q$ unless it is required explicitly.

\begin{remark}
  For the Burgers equation, 
  with the entropy function $\Scal=\frac{q^2}{2}$ and entropy flux $\Fcal=\frac{q^3}{3}$\cite{carpenter2013high}, 
the semi-discrete form in \eqnref{entropy-theory} yields 
$$ \half \DD{}{t}\norm{q}^2 \le \LRa{q, \mc{R}(q)}, $$
where  $\mc{R}(q)$ is in \eqnref{gov-burgers1d-u}. 
The entropy stability implies 
 $L^2$ stability. 
\end{remark}

\subsection{Relaxation Runge--Kutta Method}
The standard explicit RK methods are 
\begin{align*}
    Q_i &= q_n + \dt \sum_{j=1}^{i-1} a_{ij} R_j,\quad i=1,2,\cdots,s,
    \\ 
    q_{n+1} &= q_n + \dt \sum_{i=1}^s b_{i} R_i,
\end{align*}
where $R_i:=R(Q_i)$ and $a_{ij}$ and $b_i$ are scalar coefficients for $s$-stage RK methods. 
The basic idea of the relaxation Runge--Kutta method \cite{ketcheson2019relaxation,ranocha2020relaxationEuler} is to adjust the step completion with the relaxation parameter $\gamma$, effectively taking a modified step size such that the entropy stability is ensured.
The time-adjusted solution at $t_{n+\gamma}(=t_n+\gamma \dt)$ is
\begin{align*}
    q(t_n + \gamma \dt) \approx q_{n+\gamma} = q_n + \gamma \dt \sum_{i=1}^s \LRp{ b_{i} R_i } = \gamma q_{n+1} + (1-\gamma) q_n.
\end{align*}
\footnote{ 
  This is simply 
  a weighted sum of the current and the next step solutions. 
}
The change in the entropy from $t_n$ to $t_{n+\gamma}$ can be expressed as  
\begin{multline*} 
  \eta (q_{n+\gamma}) - \eta (q_n)
  = 
  \underbrace{ 
    \eta (q_{n+\gamma}) - \eta (q_n) 
    - \gamma \dt \sum_{i=1}^s b_i \LRp{R_i,\env_i}
  }_{:=\theta (\gamma)} \\
  + \gamma \dt \sum_{i=1}^s b_i \LRp{R_i,\env_i}
\end{multline*} 
with $\env_i:=\env (Q_i)$.
The last term on the right-hand side is smaller than or equal to zero, provided by $\gamma>0$ and $b_i\ge 0$. 
With a root $\gamma$ of $\theta(\gamma)=0$,
the total entropy is bounded, 
 $\eta (q_{n+\gamma}) \le \eta (q_n). $ 
Here the nonlinear scalar equation $\theta(\gamma)=0$ can be solved, for example, by 
Brent's method, the Levenberg--Marquard algorithm, or 
Newton's method \cite{ranocha2020relaxationEuler, abgrall2021relaxation}.

\begin{remark}
  The scheme using the $q(t_n + \dt) \approx q_{n+\gamma}$ approximation is referred to as an {\it incremental direction technique} (IDT) method \cite{calvo2006preservation}.
  The work in \cite[Theorem 2.7]{ketcheson2019relaxation} shows that the
  IDT method is one order less accurate than the relaxation approach. 
\end{remark}

\subsection{Relaxation IMEX Methods}
  
Consider a semi-discretized system, 
$$
\dd{q}{t} = R(q) = f(q) + g(q).
$$

Recall $s$-stage IMEX-RK methods \cite{ascher1997implicit,pareschi2005implicit,Kennedy2003additive, giraldo2013implicit},  
\begin{subequations}
   \eqnlab{IMEXRK}
  \begin{align}
   \eqnlab{IMEXRK-qi}
     Q_{n,i} &= q_n
       + \dtt\sum_{j=1}^{i-1}a_{ij} f_j
       + \dtt\sum_{j=1}^{i}\tilde{a}_{ij} g_j,\quad i=1,\hdots,s,\\
   \eqnlab{IMEXRK-qn}
     q_{n+1} &= q_n
       + \dtt\sum_{i=1}^{s}b_{i} f_i
       + \dtt\sum_{i=1}^{s}\tilde{b}_{i} g_i,
  \end{align}
\end{subequations}
where 
$f_i = f\LRp{t_n+c_i\dtt, Q_{n,i}}$, 
$g_i = g\LRp{t_n+\tilde{c}_i\dtt,Q_{n,i}}$,
$q_n = q(t_n)$;
$Q_{n,i}$ is the $i$th intermediate state;
and $\dtt$ is the time step size.
The scalar coefficients $a_{ij}$, $\tila_{ij}$, $b_i$, $\tilb_i$,
$c_i$, and $\tilc_i$ determine all the properties of a given IMEX-RK
scheme. 
For each stage, the intermediate state $Q_{n,i}$ is obtained in general by a nonlinear solve, 
\begin{align*}
    Q_{n,i} - \tila_{ii} \dt g_j = q_n + \dt \sum_{j=1}^{i-1} \LRp{a_{ij} f_j + \tila_{ij} g_j }. 
\end{align*}
\begin{remark}
A practical way to avoid the nonlinear solve is to linearize the flux. 
To that end, we define a linear operator $L(\tilde{q}):=\dd{R}{q}\big\vert_{\tilde{q}}$ 
and choose $f(q):=R(q)-Lq$ and $g(q):=Lq$, where $\tilde{q}$ can be $q_n$ or $Q_{n,i}$, $i=1,\dots,s$. 
Then at each stage the intermediate state $Q_{n,i}$ requires only a linear solve:
\begin{align}
    Q_{n,i} - \tila_{ii} \dt L_i Q_{n,i} = q_n + \dt \sum_{j=1}^{i-1} \LRp{a_{ij} N_j  + \tila_{ij} L_j Q_{n,j} }, 
    \eqnlab{ARK-Qni}
\end{align}
where 
$R_i=R(t_n+c_i\dt,Q_{n,i})$, $L_i=L(t_n+c_i\dt,\tilde{q})Q_{n,i}$, and 
$N_i=R_i - L_i$.
\end{remark}

The relaxation IMEX-RK methods adjust the final time step size by
\begin{align}
    \eqnlab{IMEX-qn-relaxation}
    q(t_n + \gamma \dt) \approx q_{n+\gamma} = q_n + \gamma \dt \sum_{i=1}^s \LRp{ b_{i} f_i + \tilde{b}_i g_i } = \gamma q_{n+1} + (1-\gamma) q_n.
\end{align}

Now, the change in the entropy from $t_n$ to $t_{n+\gamma}$ becomes
\begin{multline} 
\eqnlab{entropy-gamma-computation-imexrk}
  \eta (q_{n+\gamma}) - \eta (q_n)
  = 
  \underbrace{ 
    \eta (q_{n+\gamma}) - \eta (q_n) 
    - \gamma \dt \sum_{i=1}^s \LRp{b_i f_i + \tilde{b}_i g_i,\env_i}
  }_{:=\theta (\gamma)} \\
  + \gamma \dt \sum_{i=1}^s\LRp{ b_i f_i + \tilde{b}_i g_i,\env_i}.
\end{multline}

\begin{proposition}
\theolab{relax-imex-energy-conservation}
The relaxation IMEX-RK methods in \eqnref{IMEXRK-qi}, \eqnref{IMEXRK-qn},
and \eqnref{IMEX-qn-relaxation}
are entropy-conserving/stable 
with an entropy-conserving/stable spatial discretization of $f(q)$ and $g(q)$ and the relaxation parameter such that 
\begin{align}
    \eqnlab{imex-gamma}
\eta (q_{n+\gamma}) - \eta (q_n) 
    - \gamma \dt \sum_{i=1}^s \LRp{b_i f_i + \tilde{b}_i g_i,\env_i} = 0. 
\end{align}
In particular, for energy entropy $\eta(q)=\half \norm{q}^2$, 
the relaxation parameter is explicitly determined by 
\begin{align*}
\gamma = 2\norm{ q_{n+1} - q_n }^{-2} \dt \sum_{i=1}^s  \LRp{ b_i f_i + \tilde{b}_i g_i,Q_{n,i} - q_n  }. 
\end{align*}
\end{proposition}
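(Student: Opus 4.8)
The plan is to split the argument into the two assertions of the proposition: (i) that imposing $\theta(\gamma)=0$ yields entropy conservation/stability, and (ii) the closed-form expression for $\gamma$ in the quadratic case. For (i), I would start directly from the exact identity \eqnref{entropy-gamma-computation-imexrk}, which decomposes $\eta(q_{n+\gamma})-\eta(q_n)$ into $\theta(\gamma)$ plus the stage-weighted production term $\gamma\dt\sum_{i=1}^s\LRp{b_i f_i+\tilde{b}_i g_i,\env_i}$. Enforcing the defining equation \eqnref{imex-gamma}, namely $\theta(\gamma)=0$, collapses the identity to $\eta(q_{n+\gamma})-\eta(q_n)=\gamma\dt\sum_{i=1}^s\LRp{b_i f_i+\tilde{b}_i g_i,\env_i}$, so that the entire change in entropy over the adjusted step is carried by this single term.

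The key step of (i) is then to pin down the sign of that production term. I would invoke the entropy-conserving/stable property of the spatial discretizations of $f$ and $g$: evaluated at each stage state $Q_{n,i}$ with entropy variable $\env_i=\env(Q_{n,i})$, this guarantees $\LRp{f_i,\env_i}\le 0$ and $\LRp{g_i,\env_i}\le 0$ in the stable case (and equality in the conserving case), using that the boundary contributions in \eqnref{entropy-theory} vanish under periodic or compactly supported data. Combined with the sign conditions $\gamma>0$, $\dt>0$, and $b_i,\tilde{b}_i\ge 0$, every summand is nonpositive (respectively zero), whence $\eta(q_{n+\gamma})\le\eta(q_n)$ (respectively equality), establishing discrete entropy stability/conservation.

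For (ii) I would specialize to $\eta(q)=\half\norm{q}^2$, whose entropy variable is simply $\env=q$, so $\env_i=Q_{n,i}$. Writing $d:=q_{n+1}-q_n=\dt\sum_{i=1}^s\LRp{b_i f_i+\tilde{b}_i g_i}$ so that $q_{n+\gamma}=q_n+\gamma d$, I would expand $\eta(q_n+\gamma d)-\eta(q_n)=\gamma\LRp{q_n,d}+\tfrac{\gamma^2}{2}\norm{d}^2$ and substitute into $\theta(\gamma)=0$. Dividing out the trivial root $\gamma=0$ leaves a linear equation in $\gamma$, which solves to $\tfrac{\gamma}{2}\norm{d}^2=\dt\sum_i\LRp{b_i f_i+\tilde{b}_i g_i,Q_{n,i}}-\LRp{q_n,d}$. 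Finally, using bilinearity to rewrite $\LRp{q_n,d}=\dt\sum_i\LRp{b_i f_i+\tilde{b}_i g_i,q_n}$ and absorbing it into the sum produces the stated formula $\gamma=2\norm{q_{n+1}-q_n}^{-2}\dt\sum_i\LRp{b_i f_i+\tilde{b}_i g_i,Q_{n,i}-q_n}$.

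I expect the genuine obstacle to lie entirely in part (i), specifically in justifying the sign of the stage-weighted production term when the explicit and implicit weights differ ($b_i\ne\tilde{b}_i$): the clean bound requires that $f$ and $g$ each be discretized in a separately entropy-stable manner, or that $b_i=\tilde{b}_i$, in which case the sum reduces to $\sum_i b_i\LRp{R_i,\env_i}$ with $R_i=f_i+g_i$. Part (ii) is a routine quadratic computation whose only delicate points are discarding the spurious $\gamma=0$ solution and assuming $q_{n+1}\ne q_n$ so that $\norm{q_{n+1}-q_n}^{-2}$ is well defined.
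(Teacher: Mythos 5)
Your proposal is correct and follows essentially the same route as the paper: impose $\theta(\gamma)=0$ in the decomposition \eqnref{entropy-gamma-computation-imexrk}, use the separate entropy stability of the discretizations of $f$ and $g$ together with $\gamma,\dt>0$ and $b_i,\tilde b_i\ge 0$ to sign the remaining production term, and for $\eta=\half\norm{q}^2$ expand the quadratic, discard the root $\gamma=0$, and absorb $\LRp{q_n,\,q_{n+1}-q_n}$ into the stage sum to obtain the closed form. Your closing remarks on the need for separate stability of $f$ and $g$ (versus the $b_i=\tilde b_i$ case, which the paper defers to its Corollary) and on requiring $q_{n+1}\ne q_n$ are accurate but do not change the argument.
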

 
\begin{proof}
By solving  \eqnref{imex-gamma} for $\gamma$, 
the first,  second, and  third terms in \eqnref{entropy-gamma-computation-imexrk} vanish. 
    With an entropy-conserving/stable spatial discretization of $f$ and $g$, 
    the last term in \eqnref{entropy-gamma-computation-imexrk} 
    becomes nonpositive, that is, 
$\LRp{f_i,\env_{i}} \le 0 $ and 
$\LRp{g_i,\env_{i}} \le 0$ for $i=1,\cdots,s$, and hence 
$\eta (q_{n+\gamma}) \le \eta (q_n)$. 
  
  By substituting $\eta$ with 
  the inner-product norm $\half\norm{q}^2$ and by using \eqnref{IMEXRK-qn}, 
\eqnref{imex-gamma} 
  can be written as 
\begin{multline*}    
    \norm{q_{n+\gamma}}^2 - \norm{q_{n}}^2 
    - 2\gamma \dt \sum_{i=1}^s \LRp{b_i f_i + \tilde{b}_i g_i,Q_{n,i}} \\
    = \norm{ \gamma\LRp{q_{n+1} - q_n} + q_n  }^2 - \norm{q_{n}}^2 
    - 2\gamma \dt \sum_{i=1}^s \LRp{b_i f_i + \tilde{b}_i g_i,Q_{n,i}} \\
    = \gamma^2 \norm{ q_{n+1} - q_n }^{2} + 2\gamma \LRp{q_{n+1}-q_n,q_n} 
    - 2\gamma \dt \sum_{i=1}^s \LRp{b_i f_i + \tilde{b}_i g_i,Q_{n,i}} \\
    = \gamma^2 \norm{ q_{n+1} - q_n }^{2}  
    - 2\gamma \dt \sum_{i=1}^s \LRp{b_i f_i + \tilde{b}_i g_i,Q_{n,i}-q_n} = 0.
\end{multline*}
Rearranging a nonzero $\gamma$ leads to the desired result.
\end{proof}
 

\begin{corollary}
\theolab{relax-ark-entropy-conservation}
The relaxation IMEX-RK methods with $b_i = \tilde{b}_i$ in \eqnref{IMEXRK-qi}, \eqnref{IMEXRK-qn},
and \eqnref{IMEX-qn-relaxation}
are entropy conserving/stable with an entropy-conserving/stable spatial discretization $R(q)=f(q)+g(q)$ and the relaxation parameter 
\begin{align}
    \eqnlab{ark-gamma-general}
\eta (q_{n+\gamma}) - \eta (q_n) 
    - \gamma \dt \sum_{i=1}^s b_i\LRp{R_i,\env_i} = 0. 
\end{align}
In particular, for the energy entropy $\eta(q)=\half \norm{q}^2$ and nonstationary solution, 
the relaxation parameter is explicitly determined by 
\begin{align*}
\gamma = 2\norm{ q_{n+1} - q_n }^{-2} \dt \sum_{i=1}^s  b_i\LRp{R_i,Q_{n,i} - q_n  }. 
\end{align*}
\end{corollary}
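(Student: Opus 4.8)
The plan is to obtain this statement directly from the preceding proposition by specializing to the case $b_i = \tilde{b}_i$ and exploiting the fact that the partition then recombines into the unsplit operator $R$. First I would observe that with $b_i = \tilde{b}_i$ the final update \eqnref{IMEXRK-qn} collapses, since $f_i + g_i = f(Q_{n,i}) + g(Q_{n,i}) = R(Q_{n,i}) = R_i$, into
\[
q_{n+1} = q_n + \dt \sum_{i=1}^s b_i R_i .
\]
Substituting $b_i = \tilde{b}_i$ into the relaxation condition \eqnref{imex-gamma} and pairing $f_i$ with $g_i$ under the common weight $b_i$ turns $\sum_{i=1}^s \LRp{b_i f_i + \tilde{b}_i g_i, \env_i}$ into $\sum_{i=1}^s b_i \LRp{R_i, \env_i}$, which is exactly \eqnref{ark-gamma-general}; hence the two relaxation conditions coincide in this regime.

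For the entropy-stability claim I would return to the decomposition \eqnref{entropy-gamma-computation-imexrk}. Once $\gamma$ solves \eqnref{ark-gamma-general}, the bracketed term $\theta(\gamma)$ vanishes and only the residual $\gamma \dt \sum_{i=1}^s b_i \LRp{R_i, \env_i}$ remains. The structural point relative to the proposition is that, because the fluxes recombine, I now need only the combined semi-discrete operator $R(q) = f(q) + g(q)$ to be entropy stable, i.e.\ $\LRp{R_i, \env_i} \le 0$, instead of the two separate bounds $\LRp{f_i, \env_i} \le 0$ and $\LRp{g_i, \env_i} \le 0$. With $\gamma > 0$ and $b_i \ge 0$ this residual is nonpositive, giving $\eta (q_{n+\gamma}) \le \eta (q_n)$.

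For the explicit formula in the quadratic case $\eta(q) = \half \norm{q}^2$ I would mirror the algebraic expansion used in the proof of the proposition, carrying $R_i$ in place of $b_i f_i + \tilde{b}_i g_i$ throughout and using that the entropy variable of the quadratic entropy is $\env = q$, so that $\env_i = Q_{n,i}$. Writing $q_{n+\gamma} = \gamma\LRp{q_{n+1} - q_n} + q_n$ and inserting $q_{n+1} - q_n = \dt \sum_{i=1}^s b_i R_i$, the condition \eqnref{ark-gamma-general} reduces, after the cross term $2\gamma\LRp{q_{n+1}-q_n, q_n}$ cancels against $-2\gamma \dt \sum_{i=1}^s b_i \LRp{R_i, q_n}$, to
\[
\gamma^2 \norm{q_{n+1} - q_n}^2 - 2\gamma \dt \sum_{i=1}^s b_i \LRp{R_i, Q_{n,i} - q_n} = 0 .
\]
Dividing by the nonzero $\gamma$ --- nonzero precisely because the nonstationarity hypothesis guarantees $q_{n+1} \neq q_n$ --- yields the stated closed form.

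The computation is routine, so there is no serious obstacle; the only genuine content, and the point I would emphasize, is the recognition in the first step that the choice $b_i = \tilde{b}_i$ makes the split operator collapse back to $R$. This is what relaxes the spatial-discretization hypothesis from the pair of one-sided bounds on $f$ and $g$ required by the proposition to the single bound $\LRp{R_i, \env_i} \le 0$ on the combined residual.
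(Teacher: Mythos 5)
Your proposal is correct and follows exactly the route the paper intends: the corollary is stated without proof precisely because it is the specialization $b_i=\tilde b_i$ of Proposition \ref{theo:relax-imex-energy-conservation}, under which $b_i f_i+\tilde b_i g_i$ recombines into $b_i R_i$ and the quadratic-entropy algebra carries over verbatim. Your observation that the hypothesis on the spatial discretization correspondingly weakens from separate bounds on $f$ and $g$ to the single bound $\LRp{R_i,\env_i}\le 0$ is the right (and only) substantive point, and your handling of the nonzero root and the nonstationarity condition matches the paper's treatment in the proposition's proof.
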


\subsection{Relaxation Multirate Runge--Kutta Method}

We apply the relaxation approach to the second-order multirate Runge--Kutta  method \cite{constantinescu2007multirate}.
The MRK2 method is based on a partitioned Runge--Kutta method 
where the second-order strong-stability-preserving Runge--Kutta \cite{shu1988efficient} serves as the base method; further details are given in \cite{constantinescu2007multirate}. 

Multirate methods can be applied in different contexts. To simplify the exposition and without the loss of generality, however, we focus here on geometric-induced stiffness. 
We consider that some parts of a domain are spatially refined with a fixed 2:1 balancing ratio; that is, the ratio of an element size to its adjacent element size is at most 2. 
In the following, we first consider a two-level decomposition and then  generalize the idea to an arbitrary-level domain decomposition.

\subsubsection{Two-Level Decomposition}

A domain is decomposed into two subdomains: coarse and fine regions with the ratio of a 2:1 grid size.
Depending on the grid size and the location, the fast, the buffer, and the slow zones are identified as shown in Figure \figref{mrk2-lev2-illustration}.  
The fine region is considered the fast zone. 
The coarse regions are composed of the buffer zone and the slow zone. 
The buffer next to the fast zone is the fast buffer,
 while the buffer next to the slow zone is the slow buffer.

\begin{figure} 
    \centering
    \includegraphics[trim=0.0cm 0.0cm 0.0cm 0.0cm, width=0.95\columnwidth]{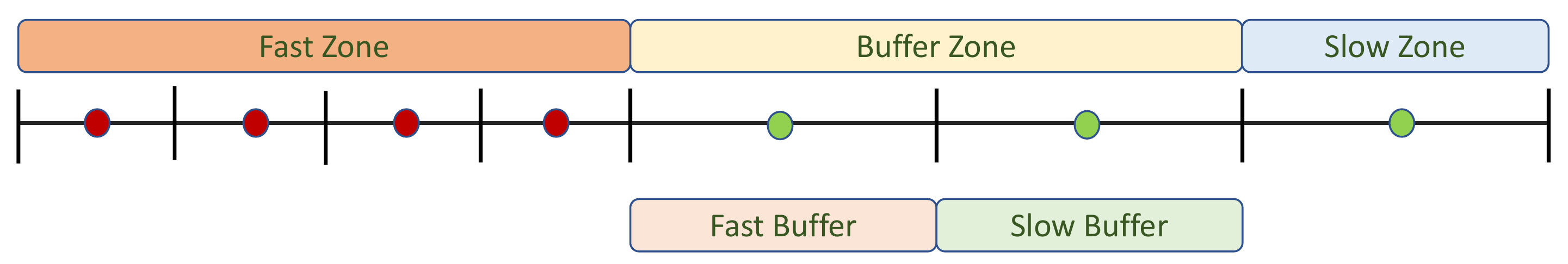}
    \caption{Illustration of MRK2 with a two-level decomposition: 
    a domain is decomposed into  fine (\circbox1{cred}) and  coarse regions (\circbox1{cgreen}). 
    Depending on the grid size and the location, the fast, the buffer, and the slow zones are identified. 
    The fine region is considered the fast zone. 
    The coarse regions are composed of the buffer zone and the slow zone. 
    The buffer next to the fast zone is the fast buffer,
    while the buffer next to the slow zone is the slow buffer.
    }
    \figlab{mrk2-lev2-illustration}
\end{figure}

Table \tabref{mrk2-butcher} shows the Butcher tableau for MRK2 with a two-level decomposition.
There are four global stages in all ($s=4$).
The solution on each element is updated depending on 
what region the element belongs to: the fast zone, the fast buffer, the slow buffer, and the slow zone. 
We assign zone number 1 for the fast zone, 2 for the fast buffer, 3 for the slow buffer, and 4 for the slow zone.
The intermediate states and the next step solution for each zone number $z$ are 
\begin{subequations}
    \begin{align}     
    \eqnlab{mrk2-qi-twolevel}
    Q_{n,i}^{\LRc{z}} &= q_n^{\LRc{z}} + \dt \sum_{j=1}^{i-1} a_{ij}^{\LRc{z}} R_j^{\LRc{z}}, \quad i=1,2,\cdots, s,\\
    \eqnlab{mrk2-qn-twolevel}
    q_{n+1}^{\LRc{z}} &= q_n^{\LRc{z}} + \dt \sum_{i=1}^s b_i^{\LRc{z}} R_i^{\LRc{z}} 
    \end{align} 
\end{subequations}
for $z=1,\cdots, 4$. 
  
\begin{table}[ht]
    \caption{Butcher tableau for MRK2 with a two-level decomposition. }
    \tablab{mrk2-butcher} 
    \begin{center}
    \begin{subtable}[c]{0.36\textwidth}    
        \begin{equation*}
            \begin{array}{c|cccc}
                  0 &     &     &     & \\
                1/2 & 1/2 &     &     & \\
                1/2 & 1/4 & 1/4 &     & \\
                  1 & 1/4 & 1/4 & 1/2 & \\
                \hline
                    & 1/4 & 1/4 & 1/4 & 1/4
            \end{array}    
        \end{equation*}
        \caption{Fast zone/buffer}
    \end{subtable}
    \begin{subtable}[c]{0.34\textwidth}    
        \begin{equation*}
            \begin{array}{c|cccc}
                0 &   &   &   & \\
                1 & 1 &   &   & \\
                0 & 0 & 0 &   & \\
                1 & 0 & 0 & 1 & \\
                \hline
                & 1/4 & 1/4 & 1/4 & 1/4
            \end{array}    
        \end{equation*}
        \caption{Slow buffer}
    \end{subtable}
    \begin{subtable}[c]{0.26\textwidth}    
        \begin{equation*}
            \begin{array}{c|cccc}
                0 &   &   &   & \\
                1 & 1 &   &   & \\
                0 & 0 & 0 &   & \\
                1 & 1 & 0 & 0 & \\
                \hline
                & 1/2 & 0 & 0 & 1/2
            \end{array}    
        \end{equation*}
        \caption{Slow zone}
    \end{subtable}
    \end{center}
\end{table}

The relaxation MRK2 for a two-level decomposition is 
\begin{align}
    \eqnlab{mrk2-qn-twolevel-relaxation}
    q^{\LRc{z}}(t_n + \gamma \dt) \approx q^{\LRc{z}}_{n+\gamma} = q^{\LRc{z}}_n + \gamma \dt \sum_{i=1}^s b_{i} R_i^{\LRc{z}} = \gamma q^{\LRc{z}}_{n+1} + (1-\gamma) q^{\LRc{z}}_n 
\end{align}
for $z=1,\cdots,4$.

\begin{proposition}
    \theolab{relax-mrk2-twolevel-energy-conservation}
    The relaxation MRK2 method for a two-level decomposition in \eqnref{mrk2-qi-twolevel}, \eqnref{mrk2-qn-twolevel},
    and \eqnref{mrk2-qn-twolevel-relaxation}
    are entropy conserving/stable with an entropy-conserving/stable spatial discretization $R$ and the relaxation parameter satisfying 
    \begin{align}
        \eqnlab{mrk2-twolevel-gamma}
    \eta (q_{n+\gamma}) - \eta (q_n) 
        - \gamma \dt \sum_{z=1}^4 \sum_{i=1}^s b_{i}^{\LRc{z}} \LRp{ R_i^{\LRc{z}},\env_{n,i}^{\LRc{z}}} = 0. 
    \end{align}
    For the quadratic invariant $\half\norm{q}^2$ and a dynamic solution ($q_{n+1}^{\LRc{z}} \ne q_n^{\LRc{z}}$), the relaxation parameter is explicitly determined: \begin{align*} 
    \gamma = 2 \LRp{\sum_{z=1}^4\norm{ q_{n+1}^{\LRc{z}} - q_n^{\LRc{z}} }^{2}}^{-1} 
    \LRp{(\dt)^2 \sum_{z=1}^4 \sum_{i=1}^s b_i^{\LRc{z}}  \LRp{
    R_i^{\LRc{z}},\frac{\LRp{Q_{n,i}^{\LRc{z}} - q_n^{\LRc{z}} } }{\dt}  } }. 
    \end{align*}
\end{proposition}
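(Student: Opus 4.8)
The plan is to follow the template established for the relaxation IMEX--RK methods above, since the multirate update \eqnref{mrk2-qn-twolevel} has the same additive structure as the IMEX update \eqnref{IMEXRK-qn}, only now the stage weights carry a zone label and the whole-domain entropy is assembled from the four zones. First I would write the multirate analog of the entropy-change decomposition \eqnref{entropy-gamma-computation-imexrk}: split $\eta(q_{n+\gamma}) - \eta(q_n)$ into the bracketed quantity $\theta(\gamma) := \eta(q_{n+\gamma}) - \eta(q_n) - \gamma\dt\sum_{z=1}^4\sum_{i=1}^s b_i^{\LRc{z}}\LRp{R_i^{\LRc{z}},\env_{n,i}^{\LRc{z}}}$ plus the directional remainder $\gamma\dt\sum_{z}\sum_{i} b_i^{\LRc{z}}\LRp{R_i^{\LRc{z}},\env_{n,i}^{\LRc{z}}}$, using that the global time-adjusted state $q_{n+\gamma}=\gamma q_{n+1}+(1-\gamma)q_n$ is assembled zone by zone through \eqnref{mrk2-qn-twolevel-relaxation}.

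For the stability half of the statement, choosing $\gamma$ to satisfy the defining equation \eqnref{mrk2-twolevel-gamma} is exactly the condition $\theta(\gamma)=0$, so the bracketed term is annihilated and only the remainder survives. I would then argue this remainder is nonpositive: the stage weights are nonnegative in every zone---reading Table \tabref{mrk2-butcher} one has $b^{\LRc{z}} = (1/4,1/4,1/4,1/4)$ for the fast and slow-buffer zones and $(1/2,0,0,1/2)$ for the slow zone---and an entropy-conserving/stable spatial discretization of $R$ supplies the stagewise sign condition $\LRp{R_i^{\LRc{z}},\env_{n,i}^{\LRc{z}}}\le 0$; together with $\gamma>0$ this yields $\eta(q_{n+\gamma})\le\eta(q_n)$, with equality in the entropy-conserving case.

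For the explicit formula I would specialize to $\eta=\half\norm{\cdot}^2$, for which $\env=q$ and $\env_{n,i}^{\LRc{z}}=Q_{n,i}^{\LRc{z}}$. The essential structural fact is that the four zones occupy disjoint parts of $\Omega_h$, so the global square norm splits additively as $\norm{q}^2 = \sum_{z=1}^4\norm{q^{\LRc{z}}}^2$. Inserting the per-zone convex combination $q_{n+\gamma}^{\LRc{z}}=\gamma q_{n+1}^{\LRc{z}}+(1-\gamma)q_n^{\LRc{z}}$ expands each summand into $\gamma^2\norm{q_{n+1}^{\LRc{z}}-q_n^{\LRc{z}}}^2 + 2\gamma\LRp{q_{n+1}^{\LRc{z}}-q_n^{\LRc{z}},q_n^{\LRc{z}}}$, and \eqnref{mrk2-qn-twolevel} lets me replace $q_{n+1}^{\LRc{z}}-q_n^{\LRc{z}}$ by $\dt\sum_i b_i^{\LRc{z}}R_i^{\LRc{z}}$. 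Substituting into \eqnref{mrk2-twolevel-gamma} cancels the $q_n^{\LRc{z}}$ pieces against the $\gamma\dt\sum_z\sum_i b_i^{\LRc{z}}\LRp{R_i^{\LRc{z}},Q_{n,i}^{\LRc{z}}}$ contribution---exactly as in the IMEX proof, leaving only $Q_{n,i}^{\LRc{z}}-q_n^{\LRc{z}}$---so one obtains the scalar quadratic $\gamma^2\sum_z\norm{q_{n+1}^{\LRc{z}}-q_n^{\LRc{z}}}^2 = 2\gamma\dt\sum_z\sum_i b_i^{\LRc{z}}\LRp{R_i^{\LRc{z}},Q_{n,i}^{\LRc{z}}-q_n^{\LRc{z}}}$. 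Dividing by the nonzero $\gamma$, which the dynamic-solution hypothesis $q_{n+1}^{\LRc{z}}\ne q_n^{\LRc{z}}$ guarantees by keeping the denominator $\sum_z\norm{q_{n+1}^{\LRc{z}}-q_n^{\LRc{z}}}^2$ from vanishing, gives the stated expression (the $(\dt)^2$ and $1/\dt$ in the displayed formula simply recombine to the single factor $\dt$).

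The main obstacle I anticipate lies in the stability half rather than in the algebra of the explicit formula. Unlike the IMEX proposition, where a single global stage state $Q_{n,i}$ makes the nonpositivity an immediate consequence of semi-discrete entropy stability, the multirate tableaus of Table \tabref{mrk2-butcher} advance different zones with different abscissae and different weights $b_i^{\LRc{z}}$, so at a fixed stage index $i$ the states $Q_{n,i}^{\LRc{z}}$ correspond to different internal times and the assembled directional term $\sum_z b_i^{\LRc{z}}\LRp{R_i^{\LRc{z}},\env_{n,i}^{\LRc{z}}}$ is not visibly a single nonpositive entropy-production integral. The crux is therefore to show that the buffer-interface contributions, where neighboring zones exchange numerical flux, combine so that the zone-summed term is nonpositive; I would handle this by appealing to the conservative, entropy-consistent coupling flux of Section \secref{model-problems}, so that what one zone loses across an interface the adjacent zone gains, which is precisely the property that collapses the four zone budgets into one global nonpositive quantity.
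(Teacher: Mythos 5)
Your proposal is correct and follows essentially the same route as the paper's proof: the same $\theta(\gamma)$ decomposition of the entropy change, annihilation of the bracketed term via \eqnref{mrk2-twolevel-gamma}, sign control of the remaining directional term through nonnegative weights and the semi-discrete entropy inequality, and the identical quadratic expansion of $\norm{q_{n+\gamma}}^2$ over the disjoint zones followed by division by the nonzero $\gamma$. The subtlety you flag in your final paragraph---that the zone-summed term $\sum_z b_i^{\LRc{z}}\LRp{R_i^{\LRc{z}},\env_{n,i}^{\LRc{z}}}$ is not stagewise a single entropy-production integral and requires the conservative, entropy-consistent interface coupling to make the four zone budgets collapse into one nonpositive quantity---is a legitimate point that the paper's proof passes over with the bare assertion that the last term ``becomes again nonpositive,'' so your treatment is, if anything, slightly more careful on that step.
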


\begin{proof}
    The change in the entropy from $t_n$ to $t_{n+\gamma}$ becomes
    \begin{multline} 
    \eqnlab{entropy-gamma-computation-mrk2}
      \eta (q_{n+\gamma}) - \eta (q_n)
      = 
      \underbrace{ 
        \eta (q_{n+\gamma}) - \eta (q_n) 
        - \gamma \dt \sum_{z=1}^4 \sum_{i=1}^s b_{i}^{\LRc{z}} \LRp{ R_i^{\LRc{z}},\env_{n,i}^{\LRc{z}}}
      }_{:=\theta (\gamma)} \\
      + \gamma \dt \sum_{z=1}^4 \sum_{i=1}^s b_{i}^{\LRc{z}} \LRp{ R_i^{\LRc{z}},\env_{n,i}^{\LRc{z}}}.
    \end{multline} 

By solving \eqnref{mrk2-twolevel-gamma} for $\gamma$, 
the first,  second, and third terms in \eqnref{entropy-gamma-computation-mrk2} vanish just as in the IMEX case. 
    With an entropy-conserving/stable spatial discretization of $R$, 
    the last term in \eqnref{entropy-gamma-computation-mrk2} 
    becomes again nonpositive.
  
  By substituting $\eta$ with 
  the inner-product norm $\half\norm{q}^2$ and by using \eqnref{mrk2-qn-twolevel}, 
  \eqnref{mrk2-twolevel-gamma} becomes
  \begin{multline*}    
        \norm{q_{n+\gamma}}^2 - \norm{q_{n}}^2 
        - 2\gamma \dt \sum_{z=1}^4 \sum_{i=1}^s b_{i}^{\LRc{z}} \LRp{ R_i^{\LRc{z}},Q_{n,i}^{\LRc{z}}} \\
        = \gamma^2 \norm{q_{n+1} - q_n}^2 
        + 2\gamma \sum_{z=1}^4 \LRp{ q_{n+1}^{\LRc{z}} - q_{n}^{\LRc{z}} ,q_{n}^{\LRc{z}}} 
        - 2\gamma \dt \sum_{z=1}^4 \sum_{i=1}^s b_{i}^{\LRc{z}} \LRp{ R_i^{\LRc{z}},Q_{n,i}^{\LRc{z}}} \\
        = \gamma^2 \norm{q_{n+1} - q_n}^2 
        - 2\gamma \dt \sum_{z=1}^4 \sum_{i=1}^s b_{i}^{\LRc{z}} \LRp{ R_i^{\LRc{z}},Q_{n,i}^{\LRc{z}} - q_{n}^{\LRc{z}} } = 0.
    \end{multline*}
    Rearranging the last equation yields an explicit form for $\gamma$. 
\end{proof}

At each stage, communication occurs 
between the fast zone and the fast buffer and 
between the fast buffer and the slow buffer. 
However, communication happens only at the first and the last stages between the slow buffer and the slow zone. 
After exchanging the interface data at the fourth stage, 
the right-hand side of the slow buffer at the second stage is evaluated.
Based on this observation, 
we group the fast zone, the fast buffer, and the slow buffer by a level block 
that has four stages, which we call a cycle.
We will use the level block notation for multilevel decomposition in the next section.

\subsubsection{Beyond Two-Level Decomposition}

We start by defining a level block. 
A level block ($B$) is formed by consecutive elements with the same multirate level ($\ell$), 
each of which is assigned to a zone number ($z$). 
That means a level block ($B$) consists of a fast zone ($fz$), fast buffer ($fb$), and slow buffer ($sb$). 
\footnote{
    We view the fast zone of  level $\ell$ as the slow zone with respect to  level $\ell+1$. 
    For instance, the level blocks with level 0 and level 1 in Figure \figref{mrk2-lev3-illustration} 
    correspond to the fast zone, the buffer zone, and the slow zone in Figure \figref{mrk2-lev2-illustration}.
}
A level block ($B$) can have a neighbor level block ($\Nbh(B)$) that has $\ell\pm1$ multirate level. 
We let $L_{\max}$ be the maximum level and $0$ be the minimum (root) level. 
We let $s_G:=2^{L_{\max}+1}$ be the total number of global stages 
and let $s^{\LRc{0}}=2$ and $s^{\LRc{\ell(B)}}=4$ (for $\ell = 1,2,\cdots$) be the total number of local stages of a level block ($B$).
We also let $m(B):=2^{\LRc{\ell(B)}-1}$ 
be the number of substeps and $\dt^{\LRc{\ell(B)}}:= \LRp{\half}^{\ell(B)-1}\dt$ be the local time step size of a level block ($B$)
so that $m(B) \dt^{\LRc{\ell(B)}} = \dt$ if $\ell(B)>0$.  
When $\ell(B)=0$, we take $m(B)=1$. 
We assume that each fast buffer ($fz$) and slow buffer ($sb$) consist of one element. 
 
The intermediate states and the next step solution of a level block ($B$) and a zone number $(z)$ 
are written as
\begin{subequations}
    \eqnlab{mrk2-multilevel}
    \begin{align}     
    \eqnlab{mrk2-qi-multilevel}
    Q_{n+\frac{k}{m(B)},i }^{\LRc{B,z}} 
        &=  q_{n+\frac{k}{m(B)}}^{\LRc{B,z}} 
         + \dt^{\LRc{\ell(B)}} \sum_{j=1}^{i-1} a_{ij}^{\LRc{z}} R_{n+\frac{k}{m(B)},j}^{\LRc{B,z}}, \\
    \eqnlab{mrk2-qn-multilevel}   
    q_{n+1 }^{\LRc{B,z}} 
    &= q_{n}^{\LRc{B,z}}  
        + \dt^{\LRc{\ell(B)}} 
        \sum_{r=1}^{m(B)}\sum_{i=1}^{s^{\LRc{\ell(B)}}} b_{i}^{\LRc{z}} R_{n+\frac{r-1}{m(B)},i}^{\LRc{B,z}}, 
    \end{align} 
\end{subequations}
where 
\begin{align*}
q_{n+\frac{k}{m(B)}}^{\LRc{B,z}}  
        &=q_{n}^{\LRc{B,z}} + \dt^{\LRc{\ell(B)}} \sum_{r=1}^{k}\sum_{i=1}^{s^{\LRc{\ell(B)}}} b_{i}^{\LRc{z}} R_{n+\frac{r-1}{m(B)},i}^{\LRc{B,z}},    
\end{align*}
for $k=0,1,\cdots,m(B)-1$ and $i=1,\cdots, s^{\LRc{\ell(B)}}$. 

At the first global stage, all level blocks are activated, 
which means that the intermediate states of all the level blocks are updated 
and exchanged between adjacent active level blocks. 
At the second and the third global stages, the level blocks that have the maximum level $L_{\max}$ are activated. 
At the fourth global stage, the level blocks that have $L_{\max}$ and $L_{\max-1}$ levels are activated. This implies that after one cycle, these level blocks are synchronized. This process is repeated until all the level blocks are synchronized at the last global stage, $s_G$. We construct the activation table in Algorithm \algref{MRK2-acvtable} to control the synchronization.
That is, according to the activation table, certain level blocks are activated at a given global stage. 
 
\begin{algorithm}[h!t!b!]
  \begin{algorithmic}[1]
    \ENSURE Given the maximum level ($L_{\max}$), construct the activation table ($actvTable$) of the size $s_G \times N_B$. Here, $N_B$ is the total number of level blocks. 
    
    \STATE{$ actvTable[:,:]=0$ }
    \FOR{$B$ in $\LRc{1:N_B}$}
      \STATE $nActv \gets 2^{\ell(B)}$
      \STATE $d \gets 2^{(L_{\max}+1-\ell(B))}$
      \FOR {$i=1:nActv$}
          \STATE {$actvTable[~1   + d(i-1),B] \gets 1$}
          \STATE {$actvTable[s_G - d(i-1),B] \gets 1$}
      \ENDFOR
    \ENDFOR
  \end{algorithmic}
  \caption{Activation Table for Level Blocks}
  \alglab{MRK2-acvtable}
\end{algorithm}


\begin{figure} 
    \centering
    \includegraphics[trim=0.0cm 0.0cm 0.0cm 0.0cm, width=0.95\columnwidth]{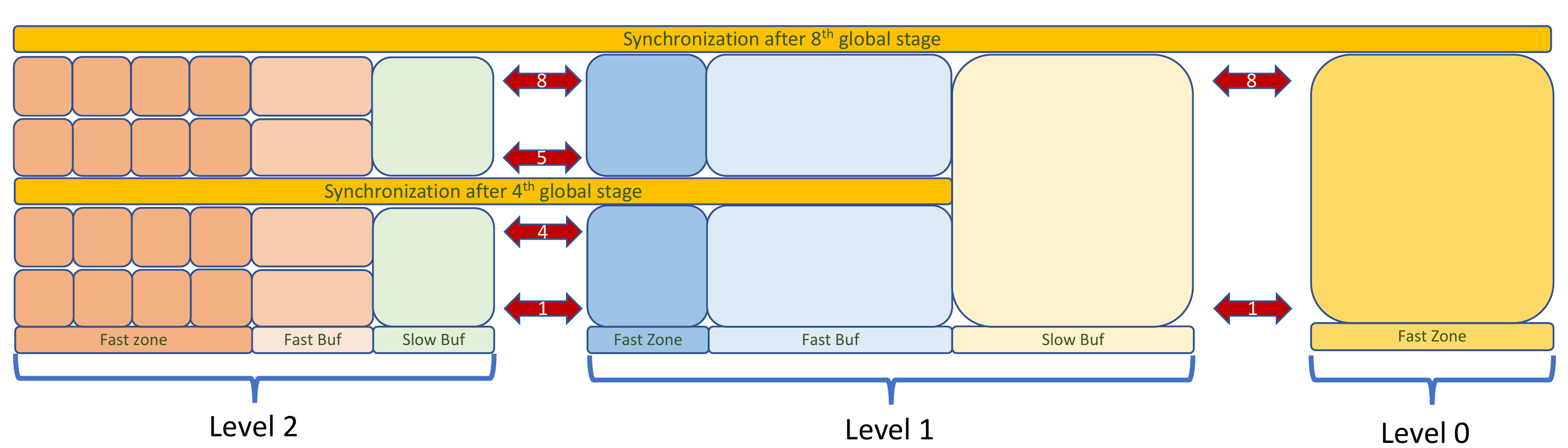}
    \caption{Illustration of MRK2 with a three-level decomposition: 
    three level blocks ($B_1$, $B_2$, and $B_3$) have $0$, $1$, and $2$ multirate levels, respectively. 
    Each level block is composed of the fast zone, the fast buffer, and the slow buffer. 
    The maximum level is two, $L_{\max} = 2$. 
    The number of global stages (the depth of MRK2) is $8(=2^{L_{\max}+1})$, 
    and the number of local stage of each level is $4$ (except level 0). 
    The $B_3$ level block communicates with $B_2$ level block at four stages (i.e., 1, 4, 5, and 8 global stages), 
    whereas the $B_2$ level block exchanges interface data with the $B_1$ level block at two stages (i.e., 1 and 8 global stages).
    }
    \figlab{mrk2-lev3-illustration}
\end{figure}

We give an example with a three-level decomposition in Figure \figref{mrk2-lev3-illustration}, 
where three level blocks ($B_1$, $B_2$, and $B_3$) have $0$, $1$, and $2$ multirate levels, respectively. 
The maximum level is two, $L_{\max} = 2$; thus the total number of global stage becomes $s_G=8$. 
$B_1$ and $B_2$ have one subcycle ($m(B_1)=m(B_2)=1$), and 
$B_3$ has two subcycles ($m(B_3)=2$). 
At every subcycle, a level block ($B$) needs to be synchronized with its neighbors ($\mc{N}(B)$). 
The $B_3$ level block communicates with the $B_2$ level block at four stages (i.e., 1, 4, 5, and 8 global stages), 
whereas the $B_2$ level block exchanges the interface data with the $B_1$ level block at two stages (i.e., 1 and 8 global stages).

The intermediate states and the next step solution corresponding to Figure \figref{mrk2-lev3-illustration} yield
  \begin{align*}     
    Q_{n,i }^{\LRc{B_1,z}}             &= q_{n}^{\LRc{B_1,z}} + 2\dt \sum_{j=1}^{2} a_{ij}^{\LRc{z}} R_{n,j}^{\LRc{B_1,z}} \quad i=1,2, \\
    Q_{n+\frac{0}{1},i }^{\LRc{B_2,z}} &= q_n^{\LRc{B_2,z}}   + \dt \sum_{j=1}^{4} a_{ij}^{\LRc{z}} R_{n,j}^{\LRc{B_2,z}} \quad i=1,\cdots,4,\\
    Q_{n+\frac{0}{2},i }^{\LRc{B_3,z}} &= q_n^{\LRc{B_3,z}}   + \frac{\dt}{2} \sum_{j=1}^{4} a_{ij}^{\LRc{z}} R_{n,j}^{\LRc{B_3,z}} \quad i=1,\cdots,4,\\
    Q_{n+\frac{1}{2},i }^{\LRc{B_3,z}} &= q_{n+\half}^{\LRc{B_3,z}} + \frac{\dt}{2} \sum_{j=1}^{4} a_{ij}^{\LRc{z}} R_{n+\half,j}^{\LRc{B_3,z}} \quad i=1,\cdots,4, 
  \end{align*} 
and 
    \begin{align*}     
    q_{n+1}^{\LRc{B_1,z}} &= q_{n}^{\LRc{B_1,z}} + 2\dt \sum_{i=1}^{2} b_{i}^{\LRc{z}} R_{n,i}^{\LRc{B_1,z}},\\
    q_{n+1}^{\LRc{B_2,z}} &= q_{n}^{\LRc{B_2,z}} +  \dt \sum_{i=1}^{4} b_{i}^{\LRc{z}} R_{n,i}^{\LRc{B_2,z}},\\
    q_{n+1}^{\LRc{B_3,z}} &= q_{n}^{\LRc{B_3,z}} +  \frac{\dt}{2} \sum_{r=1}^2\sum_{i=1}^{4} b_{i}^{\LRc{z}} R_{n+\frac{r-1}{2},i}^{\LRc{B_3,z}},
    \end{align*} 
where 
$$
    q_{n+\half}^{\LRc{B_3,z}} = q_{n}^{\LRc{B_3,z}} + \frac{\dt}{2} \sum_{i=1}^{4} b_{i}^{\LRc{z}} R_{n,i}^{\LRc{B_3,z}}, 
$$  
for $z=1,2,3$.


The relaxation MRK2 for multilevel decomposition is 
\begin{align}     
  \eqnlab{mrk2-qn-multilevel-relaxation}
   q_{n+\gamma }^{\LRc{B,z}} =  q_{n}^{\LRc{B,z}}  
    + \gamma \dt^{\LRc{\ell(B)}} \sum_{r=1}^{m(B)}\sum_{i=1}^{s^{\LRc{\ell(B)}}} b_{i}^{\LRc{z}} R_{n+\frac{r-1}{m(B)},i}^{\LRc{B,z}}  
\end{align} 
for a level block $B$ and a zone number $z$. 

\begin{proposition}
    \theolab{relax-mrk2-multilevel-entropy-conservation}
    Let $N_B$ be the number of level blocks. 
    The relaxation MRK2 method for multilevel decomposition in 
    \eqnref{mrk2-qi-multilevel}, 
    \eqnref{mrk2-qn-multilevel},
    and \eqnref{mrk2-qn-multilevel-relaxation}
    are entropy conserving/stable 
    with an entropy-conserving/stable spatial discretization $R(q)$ and the relaxation parameter
    \begin{multline}
        \eqnlab{mrk2-multilevel-gamma}
    \eta (q_{n+\gamma}) - \eta (q_n)  \\
        - \gamma \sum_{B=1}^{N_B} \dt^{\LRc{\ell(B)}} \sum_{z=1}^3 
        \sum_{r=1}^{m(B)}\sum_{i=1}^{s^{\LRc{\ell(B)}}} 
        b_i^{\LRc{z}} 
        \LRp{ 
          R_{n+\frac{r-1}{m(B)},i}^{\LRc{B,z}},
          \env_{n+\frac{r-1}{m(B)},i}^{\LRc{B,z}} 
        } = 0. 
    \end{multline}
    For the quadratic invariant $\half\norm{q}^2$, the relaxation parameter is explicitly determined: 
    \begin{multline*}
    %
    %
    \gamma = 2 \LRp{\sum_{B=1}^{N_B}\sum_{z=1}^3 \norm{ q_{n+1}^{\LRc{B,z}} - q_n^{\LRc{B,z}} }^{2}}^{-1} \\
    \LRp{\sum_{B=1}^{N_B} \dt^{\LRc{\ell(B)}} \sum_{z=1}^3 \sum_{r=1}^{m(B)}\sum_{i=1}^{s^{\LRc{\ell(B)}}} b_i^{\LRc{z}}  \LRp{ R_{n+\frac{r-1}{m(B)},i}^{\LRc{B,z}},
      Q_{n+\frac{r-1}{m(B)},i}^{\LRc{B,z}} - q_n^{\LRc{B,z}}  } }. 
    \end{multline*}
\end{proposition}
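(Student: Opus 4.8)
The plan is to run the same three-move argument used for the two-level case, the only new ingredient being the bookkeeping across level blocks $B$ and their substeps $r$. First I would rewrite the entropy increment as the defining quantity $\theta(\gamma)$ plus a residual,
\[
\eta(q_{n+\gamma}) - \eta(q_n) = \theta(\gamma) + \gamma \sum_{B=1}^{N_B} \dt^{\LRc{\ell(B)}} \sum_{z=1}^3 \sum_{r=1}^{m(B)} \sum_{i=1}^{s^{\LRc{\ell(B)}}} b_i^{\LRc{z}} \LRp{R_{n+\frac{r-1}{m(B)},i}^{\LRc{B,z}}, \env_{n+\frac{r-1}{m(B)},i}^{\LRc{B,z}}},
\]
where $\theta(\gamma)$ is exactly the left-hand side of \eqnref{mrk2-multilevel-gamma}. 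Choosing $\gamma$ to be a root of $\theta(\gamma)=0$ annihilates the first group; the surviving residual is nonpositive because $R$ is discretized in an entropy-conserving/stable way, so $\LRp{R,\env}\le 0$ block-by-block (together with $\gamma>0$ and $b_i^{\LRc{z}}\ge 0$), giving $\eta(q_{n+\gamma}) \le \eta(q_n)$. This part is a verbatim transcription of the two-level proof.

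For the explicit formula in the quadratic case I would set $\eta = \half\norm{q}^2$, so that $\env = q$ and the entropy variable at each stage reduces to the intermediate state $Q_{n+\frac{r-1}{m(B)},i}^{\LRc{B,z}}$. Multiplying \eqnref{mrk2-multilevel-gamma} by two and using $q_{n+\gamma} = \gamma(q_{n+1}-q_n) + q_n$ gives
\[
\norm{q_{n+\gamma}}^2 - \norm{q_n}^2 = \gamma^2 \norm{q_{n+1}-q_n}^2 + 2\gamma \LRp{q_{n+1}-q_n, q_n}.
\]
The key per-block identity is the telescoping update \eqnref{mrk2-qn-multilevel}, namely $q_{n+1}^{\LRc{B,z}} - q_n^{\LRc{B,z}} = \dt^{\LRc{\ell(B)}} \sum_{r}\sum_{i} b_i^{\LRc{z}} R_{n+\frac{r-1}{m(B)},i}^{\LRc{B,z}}$. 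Substituting this into the cross term $2\gamma\LRp{q_{n+1}-q_n, q_n}$ and combining with the $\LRp{R,Q}$ sum collapses everything to
\[
\gamma^2 \norm{q_{n+1}-q_n}^2 - 2\gamma \sum_{B=1}^{N_B} \dt^{\LRc{\ell(B)}} \sum_{z=1}^3 \sum_{r=1}^{m(B)} \sum_{i=1}^{s^{\LRc{\ell(B)}}} b_i^{\LRc{z}} \LRp{R_{n+\frac{r-1}{m(B)},i}^{\LRc{B,z}}, Q_{n+\frac{r-1}{m(B)},i}^{\LRc{B,z}} - q_n^{\LRc{B,z}}} = 0,
\]
and cancelling one factor of $\gamma$ (nonzero for a dynamic solution) yields the stated relaxation parameter.

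The step I expect to be the real obstacle is not the algebra, which duplicates the two-level computation once the notation is in place, but justifying that the global $\Omegah$ inner product splits as an orthogonal direct sum over the pairs $(B,z)$ with $z\in\LRc{1,2,3}$. Because the degrees of freedom are elementwise and discontinuous, this reduces to showing that ranging $z$ over $\LRc{1,2,3}$ (fast zone, fast buffer, slow buffer) across all $N_B$ level blocks visits each element exactly once; here I would invoke the convention that the fast zone of level $\ell$ plays the role of the slow zone of level $\ell+1$, so that no element is double-counted at a fast/slow interface and the root level (carrying only two stages) supplies the coarsest contribution. Establishing this partition, using the activation table and synchronization structure, is what legitimizes both $\norm{q_{n+1}-q_n}^2 = \sum_{B}\sum_{z}\norm{q_{n+1}^{\LRc{B,z}} - q_n^{\LRc{B,z}}}^2$ and the block-by-block splitting of the cross term. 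Once it is in hand, the general entropy case follows from the sign of $\LRp{R,\env}$ under an entropy-stable discretization, and the quadratic case from the displayed cancellation.
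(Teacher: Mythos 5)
Your proposal is correct and follows essentially the same route as the paper's proof: define $\theta(\gamma)$ as the left-hand side of \eqnref{mrk2-multilevel-gamma}, kill it by the root condition, use the sign of $\LRp{R,\env}$ for stability, and in the quadratic case expand $\norm{\gamma(q_{n+1}-q_n)+q_n}^2$ and cancel the cross term against the $\LRp{R,Q}$ sum via \eqnref{mrk2-qn-multilevel}. Your additional remark about verifying that the pairs $(B,z)$ partition the elements so the global norm decomposes blockwise is a point the paper's proof takes for granted rather than argues, but it does not change the substance of the argument.
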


\begin{proof}
    
     The change in the entropy from $t_n$ to $t_{n+\gamma}$ becomes
        \begin{multline} 
        \eqnlab{entropy-gamma-computation-mrk2-multilevel}
          \eta (q_{n+\gamma}) - \eta (q_n)
          =  \\
          \underbrace{ 
            \eta (q_{n+\gamma}) - \eta (q_n)
            - \gamma \sum_{B=1}^{N_B} \dt^{\LRc{\ell(B)}} \sum_{z=1}^3 
        \sum_{r=1}^{m(B)}\sum_{i=1}^{s^{\LRc{\ell(B)}}} 
        b_i^{\LRc{z}} 
        \LRp{ 
          R_{n+\frac{r-1}{m(B)},i}^{\LRc{B,z}},
          \env_{n+\frac{r-1}{m(B)},i}^{\LRc{B,z}} 
        }
          }_{:=\theta (\gamma)} \\
          + \gamma \sum_{B=1}^{N_B} \dt^{\LRc{\ell(B)}} \sum_{z=1}^3 
        \sum_{r=1}^{m(B)}\sum_{i=1}^{s^{\LRc{\ell(B)}}} 
        b_i^{\LRc{z}} 
        \LRp{ 
          R_{n+\frac{r-1}{m(B)},i}^{\LRc{B,z}},
          \env_{n+\frac{r-1}{m(B)},i}^{\LRc{B,z}} 
        }.
        \end{multline} 
    
    By solving \eqnref{mrk2-multilevel-gamma} for $\gamma$, 
    the first,  second, and  third terms in \eqnref{entropy-gamma-computation-mrk2-multilevel} vanish as before. 
        With an entropy-conserving/stable spatial discretization of $R$, 
        the last term in \eqnref{entropy-gamma-computation-mrk2-multilevel} 
        becomes nonpositive as expected.
  
  By substituting $\eta$ with 
  the inner-product norm $\half\norm{q}^2$ and using \eqnref{mrk2-qn-multilevel}, 
  \eqnref{mrk2-multilevel-gamma} becomes
    \begin{multline*}    
        \norm{q_{n+\gamma}}^2 - \norm{q_{n}}^2 
        - 2\gamma \sum_{B=1}^{N_B} \dt^{\LRc{\ell(B)}} \sum_{z=1}^3 
        \sum_{r=1}^{m(B)}\sum_{i=1}^{s^{\LRc{\ell(B)}}} 
        b_i^{\LRc{z}} 
        \LRp{ 
          R_{n+\frac{r-1}{m(B)},i}^{\LRc{B,z}},
          Q_{n+\frac{r-1}{m(B)},i}^{\LRc{B,z}} 
        } \\
        =
        \gamma^2 \norm{q_{n+1} - q_n }^2 
        + 2 \gamma \sum_{B=1}^{N_B} \sum_{z=1}^3 \LRp{ q_{n+1}^{\LRc{B,z} } - q_{n}^{\LRc{B,z} }, q_n^{\LRc{B,z} }} \\
        - 2\gamma \sum_{B=1}^{N_B} \dt^{\LRc{\ell(B)}} \sum_{z=1}^3 
        \sum_{r=1}^{m(B)}\sum_{i=1}^{s^{\LRc{\ell(B)}}} 
        b_i^{\LRc{z}} 
        \LRp{ 
          R_{n+\frac{r-1}{m(B)},i}^{\LRc{B,z}},
          Q_{n+\frac{r-1}{m(B)},i}^{\LRc{B,z}} 
        } \\
        =
        \gamma^2 \norm{q_{n+1} - q_n }^2 \\
        - 2\gamma \sum_{B=1}^{N_B} \dt^{\LRc{\ell(B)}} \sum_{z=1}^3 
        \sum_{r=1}^{m(B)}\sum_{i=1}^{s^{\LRc{\ell(B)}}} 
        b_i^{\LRc{z}} 
        \LRp{ 
          R_{n+\frac{r-1}{m(B)},i}^{\LRc{B,z}},
          Q_{n+\frac{r-1}{m(B)},i}^{\LRc{B,z}}  - q_{n}^{\LRc{B,z}}
        } =0.  
    \end{multline*}
   Rearranging the last equation yields the explicit $\gamma$. 
\end{proof}

 
For implementation, 
first we balance the multirate level of each element 
so that all the level blocks have a 2:1 local time step size ratio to their adjacent level blocks according to Algorithm \algref{MRK-balancing}. 
Next  we construct the activation table in Algorithm \algref{MRK2-acvtable}. 
Then we compute the entropy-conserving/stable solutions 
according to \eqnref{mrk2-multilevel}. 
 
 \begin{algorithm}[h!t!b!]
    \begin{algorithmic}[1]
      \ENSURE Let $K$ be the total number of elements. Let $LE \in \R$ be the initial multirate levels of all elements. 
      Let $BLE \in \R$ be the 2:1 balanced multirate levels of all elements.
      That is, a level jump between adjacent elements is at most one. 
      Given $LE$,  $BLE$ is created. 
      Let $bs$ be the buffer size of two. 
      We assume that $bs+1$ left/right boundary elements have the same level, 
      $LE_{(1:bs+1)}= \ell_{left}$ and $LE_{((K-bs):K)}= \ell_{right}$. 
       
      \STATE{Compute level difference, $\delta LE_{(1:K-1)}= LE_{(2:K)} - LE_{(1:K-1)} $} 
      \STATE{$idx \leftarrow Find (\delta LE \ne 0)$ }
      \STATE{$cond=\textbf{false}$; $BLE \gets LE$}
      \WHILE{$!cond$}
        \STATE{$cond=\textbf{true}$}
        \FOR{$i$ in $\LRc{1:length(idx)}$}
          \STATE{$iK_1 \leftarrow idx[i]$}
          \STATE{$iK_2 \leftarrow idx[i+1]$}
          \IF{$(\delta LE_{(iK_1)} == -1)$ and $(\delta LE_{(iK_2)} == -1)$ } 
            \IF{$(iK_2 - iK_1) < (bs+1) $}
              \STATE{$cond=\textbf{false}$}
              \STATE{$nc = (bs + 1) - (iK_2-iK_1)$}
              \STATE{$BLE_{(iK_2+1:iK_2+nc)} = \max(BLE_{(iK_2+1:iK_2+nc)}, LE_{(iK_2)})$ } 
            \ENDIF
          \ELSIF{$(\delta LE_{(iK_1)} == -1)$ and $(\delta LE_{(iK_2)} == 1)$} 
            \IF{$(iK_2 - iK_1) < (bs+1) $}
              \STATE{$cond=\textbf{false}$}
              \STATE{$BLE_{(iK_1+1:iK_2)} = \max(BLE_{(iK_2)},LE_{(iK_1)})$}
            \ENDIF
          \ELSIF{$(\delta LE_{(iK_1)} == 1)$ and $(\delta LE_{(iK_2)} == 1)$} 
            \STATE{$cond=\textbf{false}$}
            \IF{$(iK_2 - iK_1) < (bs+1) $}
              \STATE{$cond=\textbf{false}$}
              \STATE{$nc = (bs + 1) - (iK_2-iK_1)$}
              \STATE{$BLE_{(iK_1+1-nc:iK_1)} = \max(BLE_{(iK_1+1-nc:iK_1)}, LE_{(iK_2)})$ } 
            \ENDIF
          \ENDIF
        \ENDFOR
        \STATE{Update level of elements, $LE \leftarrow BLE$}
        \STATE{Compute level difference, $\delta LE_{(1:K-1)}:= LE_{(2:K)} - LE_{(1:K-1)} \in \R^{K-1}$ } 
        \STATE{$idx \leftarrow Find (\delta LE \ne 0)$ }
      \ENDWHILE
    \end{algorithmic}
    \caption{Balancing Multirate Level of Elements}
    \alglab{MRK-balancing}
  \end{algorithm}

\section{Numerical Results}
\seclab{NumericalResults}

In this section we present several numerical experiments to demonstrate 
the entropy-conserving/stable properties of the proposed IMEX methods and the multirate methods. 
We compare standard methods, relaxation approaches, and incremental direction techniques for both IMEX and multirate methods. 
For IMEX methods, we use additive Runge--Kutta (ARK) methods \cite{Kennedy2003additive}, 
and call them Relaxation-ARK and IDT-ARK for their relaxation 
and incremental direction techniques, respectively. 
For multirate methods, 
we employ the second-order partitioned multirate Runge--Kutta (MRK2) methods 
\cite{constantinescu2007multirate}, 
which we refer to as Relaxation-MRK2 and IDT-MRK2 for their relaxation and incremental direction techniques, respectively.
We use the IMEX methods for handling scale-separable stiffness on a uniform mesh and the multirate method for dealing with geometric-induced stiffness on nonuniform meshes.
We measure the $L^2$ error of $q$ by $\norm {q - q_r}$, 
 where $q_r$ is either an exact solution or a reference solution. 
 The total entropy difference and the total mass difference are denoted by 
 $|\eta(t) - \eta(0)|$ and $|\textnormal{mass}(t) - \textnormal{mass}(0)|$ at time $t$, 
where $\textnormal{mass}(t):= \LRp{q,1}$.

\subsection{Entropy-Preserving IMEX for ODEs}
\subsubsection*{Conserved Exponential Entropy}
We take the initial condition of $q=(1,0.5)^T$ for \eqnref{ode-expo-entropy} 
and run the simulations for $t\in[0,5]$ with $\dt=0.1$. 
We plot the time series of the exponential entropy in Figure \figref{ode-exponential-entropy}.
We observe that the total entropy differences for both the Relaxation-ARK and IDT-ARK are below $\mathcal{O}(10^{-13})$, 
whereas the standard ARK counterpart shows a difference of orders of magnitude, such as $\mathcal{O}(10^{-3})$, as expected.

\begin{figure} 
  \centering
      \includegraphics[trim=1.2cm 0.2cm 0.2cm 5.2cm,
      width=0.95\columnwidth]{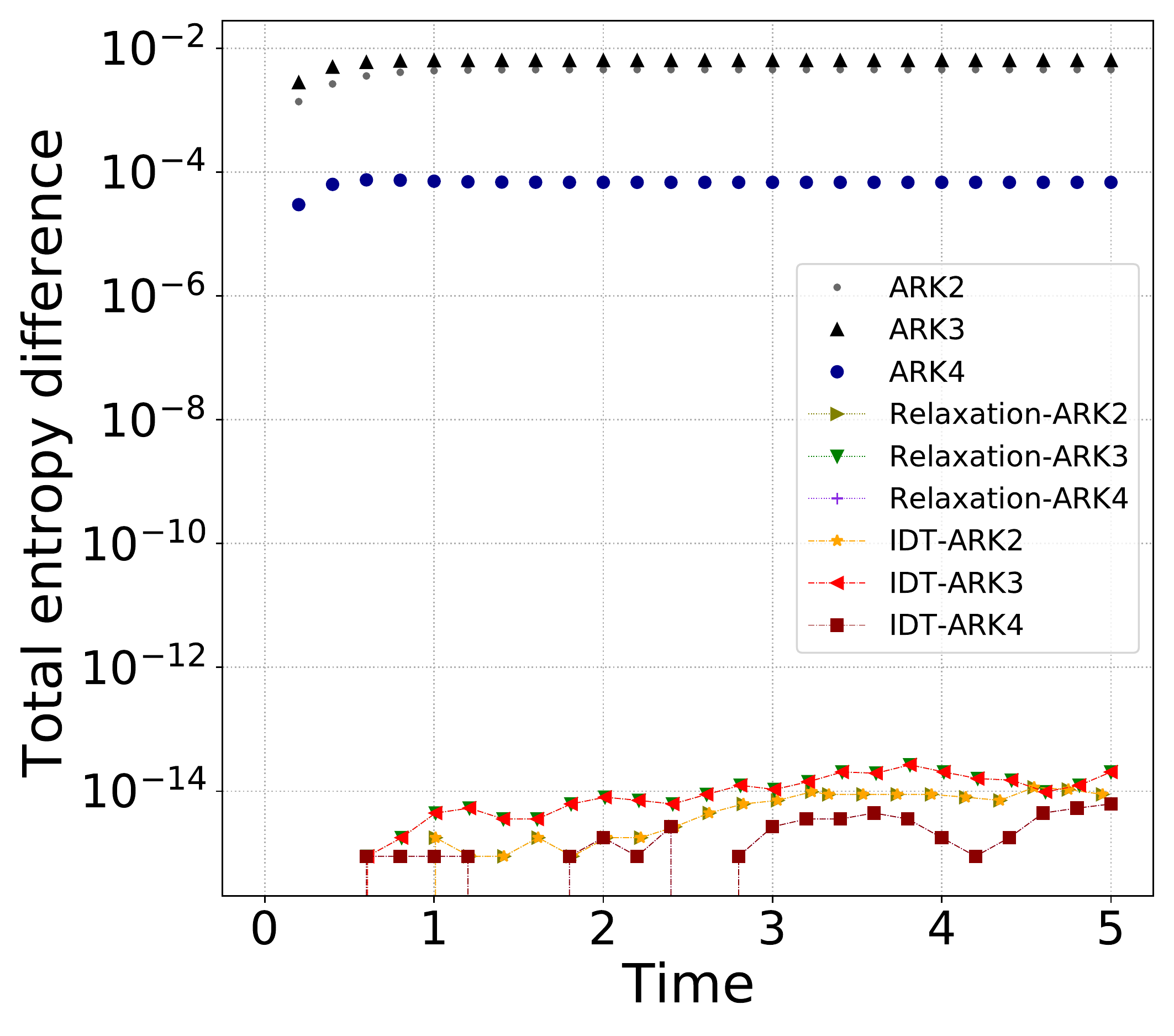}
  \caption{ODE: conserved exponential entropy. 
   Total entropy differences for both the Relaxation-ARK and IDT-ARK are bounded within $\mathcal{O}(10^{-13})$, 
   whereas the ARK counterpart shows a difference of orders of magnitude, $\mathcal{O}(10^{-3})$. }
   \figlab{ode-exponential-entropy}
\end{figure}

\subsubsection*{Nonlinear Pendulum}
 
For \eqnref{ode-pendulum-entropy} we examine the entropy behavior and the solution trajectory over time in Figure \figref{ode-nonlinear-pendulum}.
We take $\dt=0.9$ and run the simulations for $t\in[0,1000]$. 
Both the Relaxation-ARK and IDT-ARK keep the pendulum in a track, 
but standard ARK methods cannot hold the pendulum in the path. 
The total entropy difference for both the Relaxation-ARK and IDT-ARK are bounded within $\mathcal{O}(10^{-13})$;
however, as expected, standard ARK methods have $\mathcal{O}(1)$ entropy difference during the simulation.

\begin{figure} 
  \centering
  \begin{subfigure}[Trajectory]{0.5\linewidth}
      \includegraphics[trim=1.5cm 0.0cm 2.2cm 5.5cm,
      width=0.95\columnwidth]{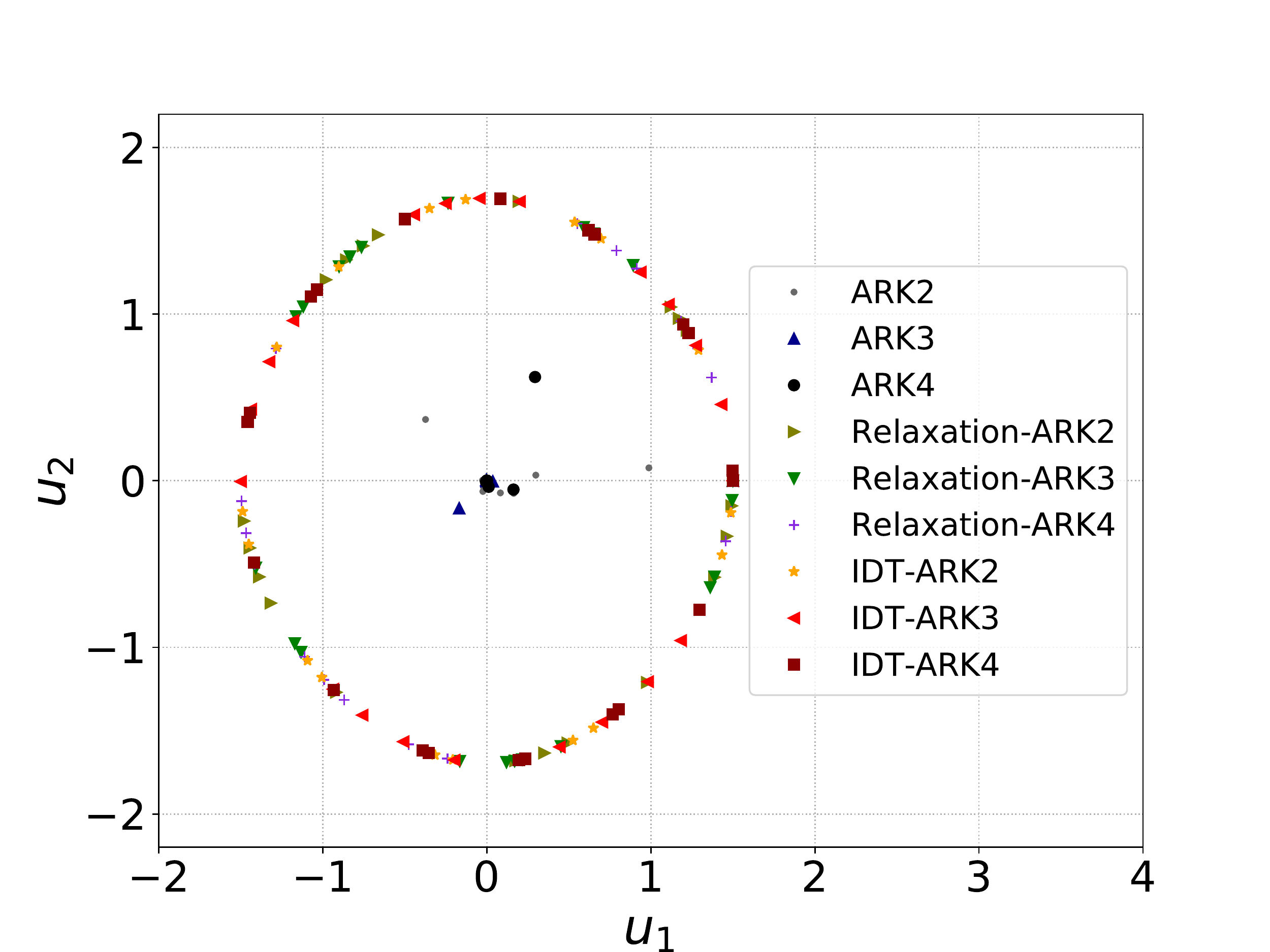}
      \caption{Trajectory}
      \figlab{ode-nonlinear-pendulum-trajectory}
  \end{subfigure} %
  \begin{subfigure}[Energy difference]{0.45\linewidth}    
      \includegraphics[trim=1.5cm 0.5cm 2.2cm 5.5cm,
      width=0.9\columnwidth]{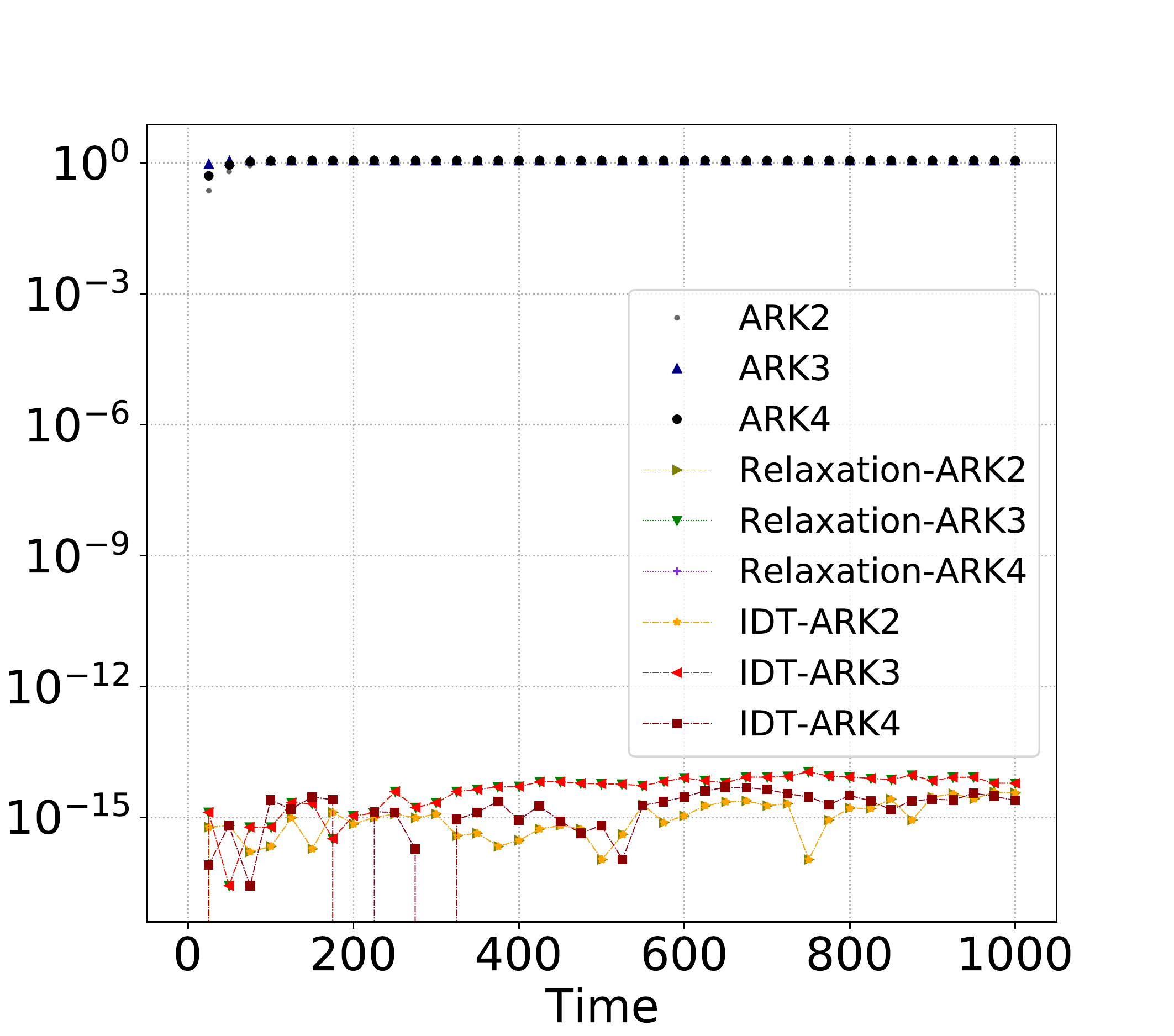}
      \caption{Entropy difference}
      \figlab{ode-nonlinear-pendulum-entropyloss}
  \end{subfigure} 
  \caption{ODE: nonlinear pendulum. 
  Both the Relaxation-ARK and IDT-ARK keep the pendulum in a track, 
  but standard ARK methods cannot hold the pendulum on the exact path. 
  The total entropy difference for both the Relaxation-ARK and IDT-ARK is bounded below $\mathcal{O}(10^{-13})$; 
  however, standard ARK methods have $\mathcal{O}(1)$ entropy difference during the simulation.
  }
   \figlab{ode-nonlinear-pendulum}
\end{figure}

\subsection{Entropy-Stable IMEX for the Burgers Equation on a Uniform Mesh} 
 
We consider a Gaussian initial profile, which develops a shock as time passes for the Burgers equation.
The initial condition is given as 
\begin{alignat*}{2}
  q(t=0) &= \exp(-10 x^2)
\end{alignat*}
on $x\in[-1,1]$. 
A  periodic boundary condition is applied. 

We first perform temporal convergence studies with entropy-conserving (EC) and entropy-stable (ES) fluxes 
for the ARK, Relaxation-ARK, and IDT-ARK methods.
In particular, we use the IMEX methods based on the linearized flux in \eqnref{gov-burgers1d-split}.
We take the RK4 solution (with $\dt = 5\times 10^{-6}$, $N=3$, and $N_E=100$) as the ``ground truth" solution 
and measure the relative errors at $t = 0.2$ (before forming a shock) in Table \tabref{burgers-tconv-gaussian-ark2} and Table \tabref{burgers-tconv-gaussian-ark3}.

In Table \tabref{burgers-tconv-gaussian-ark2}
we observe the second-order rate of convergence for both ARK2 and Relaxation-ARK2 with EC and ES fluxes. IDT-ARK2, however, shows the first-order rate of convergence.
This is a consequence of the time discretization error of the IDT approach. 
Similarly, in Table \tabref{burgers-tconv-gaussian-ark3},
 IDT-ARK3 shows a second-order rate of convergence, 
which is one degree less accurate than that of its ARK3 and Relaxation-ARK3 counterparts. 
As shown in both Table \tabref{burgers-tconv-gaussian-ark2} and Table \tabref{burgers-tconv-gaussian-ark3}, the relative error of Relaxation-ARK methods is slightly lower than that of naive ARK methods.
  
 \begin{table}[t] 
  \caption{Gaussian example: temporal convergence study of ARK2 methods conducted 
  on a uniform mesh of $N=3$ and $K=100$.  
  Time step sizes are chosen as $\dt=0.00125\LRc{1, 1/2, 1/4, 1/8, 1/16}$ with EC flux; 
  and $\dt=0.005\LRc{1, 1/2, 1/4, 1/8, 1/16}$ with ES flux. 
  By taking the RK4 solution with $\dt=5.0\times 10^{-6}$ as the  ``ground truth" solution, 
  we measure the relative errors of the ARK2, Relaxation-ARK2, and IDT-ARK2 methods at $t=0.2$. } 
  \tablab{burgers-tconv-gaussian-ark2} 
  \begin{center} 
    \begin{tabular}{*{1}{c}|*{1}{c}|*{2}{c}|*{2}{c}|*{2}{c}} 
      \hline 
      \multirow{2}{*}{$flux$}
      & \multirow{2}{*}{$dt$}
      & \multicolumn{2}{c}{ARK2} 
      & \multicolumn{2}{c}{Relaxation-ARK2 } 
      & \multicolumn{2}{c}{IDT-ARK2} \tabularnewline 
      & & Error & Order &Error & Order &Error & Order \tabularnewline 
    \hline\hline 
    &1.250e-03&       1.60E-05 &      $-$&      1.48E-05 &     $-$&       1.30E-04 & $-$\tabularnewline
    &6.250e-04&       4.00E-06 &    2.00&       3.71E-06 &    2.00&       6.65E-05 &    0.97\tabularnewline
 EC &3.125e-04&       1.00E-06 &    2.00&       9.29E-07 &    2.00&       3.37E-05 &    0.98\tabularnewline
    &1.563e-04&       2.51E-07 &    2.00&       2.32E-07 &    2.00&       1.69E-05 &    0.99\tabularnewline
    &7.813e-05&       6.27E-08 &    2.00&       5.81E-08 &    2.00&       8.49E-06 &    1.00\tabularnewline
 \tabularnewline 
    &5.000e-03&       2.51E-04 &      $-$&      2.32E-04 &     $-$&       4.80E-04 & $-$\tabularnewline
    &2.500e-03&       6.36E-05 &    1.98&       5.88E-05 &    1.98&       2.50E-04 &    0.94\tabularnewline
 ES &1.250e-03&       1.60E-05 &    1.99&       1.48E-05 &    1.99&       1.30E-04 &    0.94\tabularnewline
    &6.250e-04&       4.00E-06 &    2.00&       3.71E-06 &    2.00&       6.65E-05 &    0.97\tabularnewline
    &3.125e-04&       1.00E-06 &    2.00&       9.29E-07 &    2.00&       3.36E-05 &    0.98\tabularnewline
 \hline\hline 
     \end{tabular} 
  \end{center}     
\end{table} 
 
\begin{table}[t] 
  \caption{Same as \tabref{burgers-tconv-gaussian-ark2}, except the third-order accurate methods.} 
  \tablab{burgers-tconv-gaussian-ark3} 
  \begin{center} 
    \begin{tabular}{*{1}{c}|*{1}{c}|*{2}{c}|*{2}{c}|*{2}{c}} 
      \hline 
      \multirow{2}{*}{$flux$}
      & \multirow{2}{*}{$\dt$}
      & \multicolumn{2}{c}{ARK3} 
      & \multicolumn{2}{c}{Relaxation-ARK3 } 
      & \multicolumn{2}{c}{IDT-ARK3} \tabularnewline 
      & & Error & Order &Error & Order &Error & Order \tabularnewline 
    \hline\hline 
    &1.250e-03&       4.76E-07 &     $-$&       4.53E-07 &     $-$&       1.02E-05 & $-$\tabularnewline
    &6.250e-04&       6.03E-08 &    2.98&       5.76E-08 &    2.98&       2.54E-06 &    2.00\tabularnewline
 EC &3.125e-04&       7.61E-09 &    2.99&       7.29E-09 &    2.98&       6.34E-07 &    2.00\tabularnewline
    &1.563e-04&       9.58E-10 &    2.99&       9.18E-10 &    2.99&       1.58E-07 &    2.00\tabularnewline
    &7.813e-05&       1.20E-10 &    2.99&       1.15E-10 &    2.99&       3.96E-08 &    2.00\tabularnewline
 \tabularnewline 
    &5.000e-03&       2.76E-05 &     $-$&       2.58E-05 &     $-$&       1.65E-04 & $-$\tabularnewline
    &2.500e-03&       3.61E-06 &    2.93&       3.41E-06 &    2.92&       4.09E-05 &    2.01\tabularnewline
 ES &1.250e-03&       4.59E-07 &    2.97&       4.36E-07 &    2.97&       1.02E-05 &    2.01\tabularnewline
    &6.250e-04&       5.79E-08 &    2.99&       5.51E-08 &    2.99&       2.54E-06 &    2.00\tabularnewline
    &3.125e-04&       7.26E-09 &    3.00&       6.92E-09 &    2.99&       6.34E-07 &    2.00\tabularnewline
 \hline\hline     
      \end{tabular} 
  \end{center}     
\end{table}

To investigate the entropy-conserving properties of ARK methods, 
we conduct the numerical experiments for $t\in \LRs{0,2}$ with a uniform mesh of $N=3$ and $N_E=800$. 
The time step size of RK2 is taken as $\dt_{RK}=3.125\times 10^{-5}$, 
whereas the time  step sizes of ARK, Relaxation-ARK, and IDT-ARK have $5\times \dt_{RK}$.
\footnote{
  RK2 with $2\times \dt_{RK}$ leads to blow up its numerical solution.
}
Figure \figref{pde-burgers-ec-totalenergyhistory-ark} shows the time series of the total energy and its difference 
for the RK2,
ARK2, Relaxation-ARK2, IDT-ARK2, 
ARK3, Relaxation-ARK3, and IDT-ARK3 methods. 
The second- and the third-order Relaxation-ARK and IDT-ARK methods conserve their total energies within $\mc{O}(10^{-13})$ differences,
whereas ARK2 and ARK3 show a slightly decreasing trend of total energy. 
This is because IMEX methods act as a high-frequency filter by treating the fast-varying dynamics implicitly \cite{kang2019imex,GiraldoKellyConstantinescu13}.
As a result, energy-stable behavior is observed for the standard ARK methods. 
RK2, however, does not have any filter functionality, so its total energy shows an increasing trend.
 
\begin{figure} 
  \centering
  \begin{subfigure}[Total energy history (EC)]{0.45\linewidth}
      \includegraphics[trim=0.2cm 0.2cm 0.2cm 0.2cm, width=0.95\columnwidth]{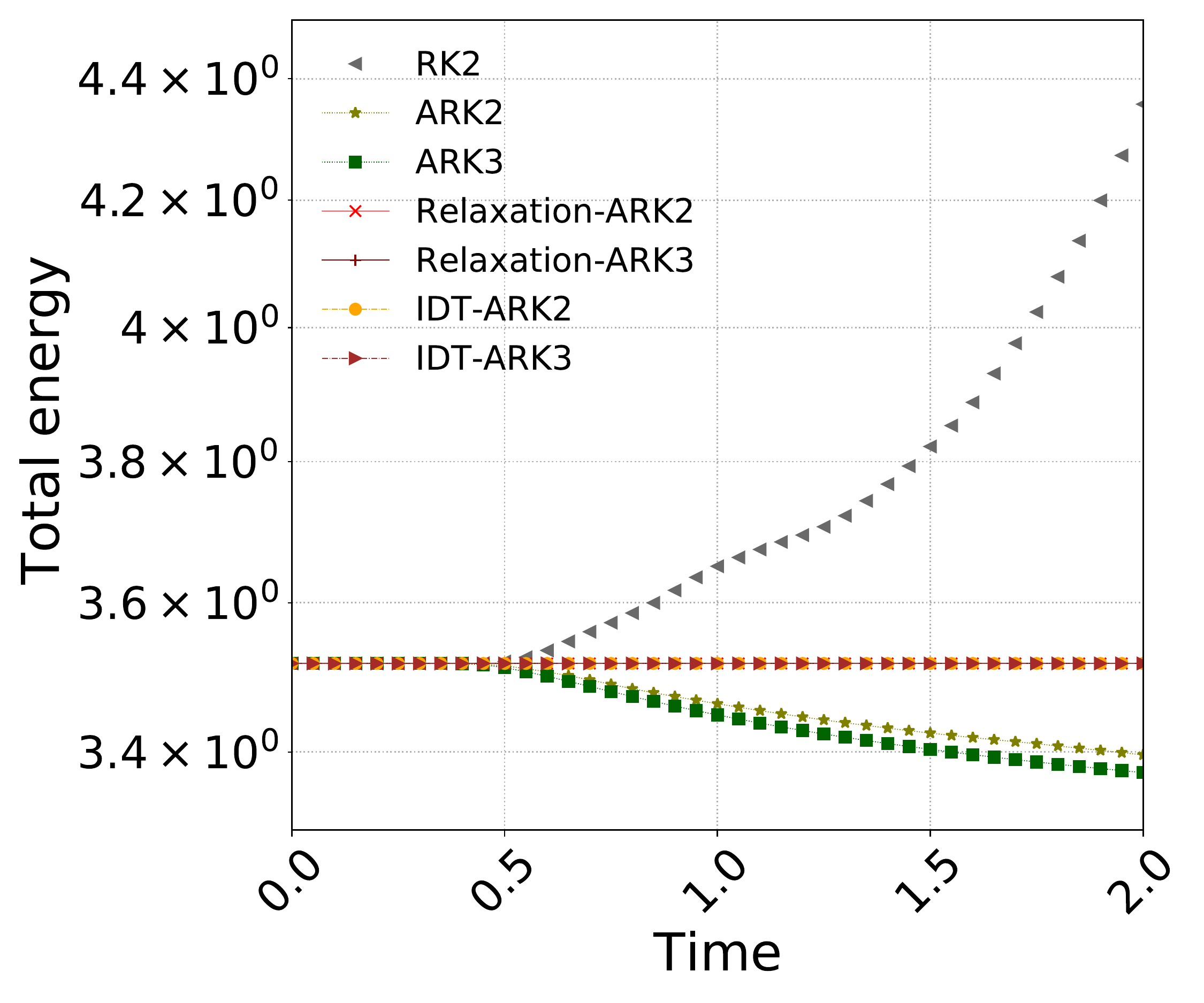}
      \caption{Total energy history (EC)}
      \figlab{pde-burgers-ec-energyloss-ark}
  \end{subfigure} %
  \begin{subfigure}[Total energy difference (EC)]{0.45\linewidth}    
      \includegraphics[trim=0.2cm 0.2cm 0.2cm 0.2cm, width=0.95\columnwidth]{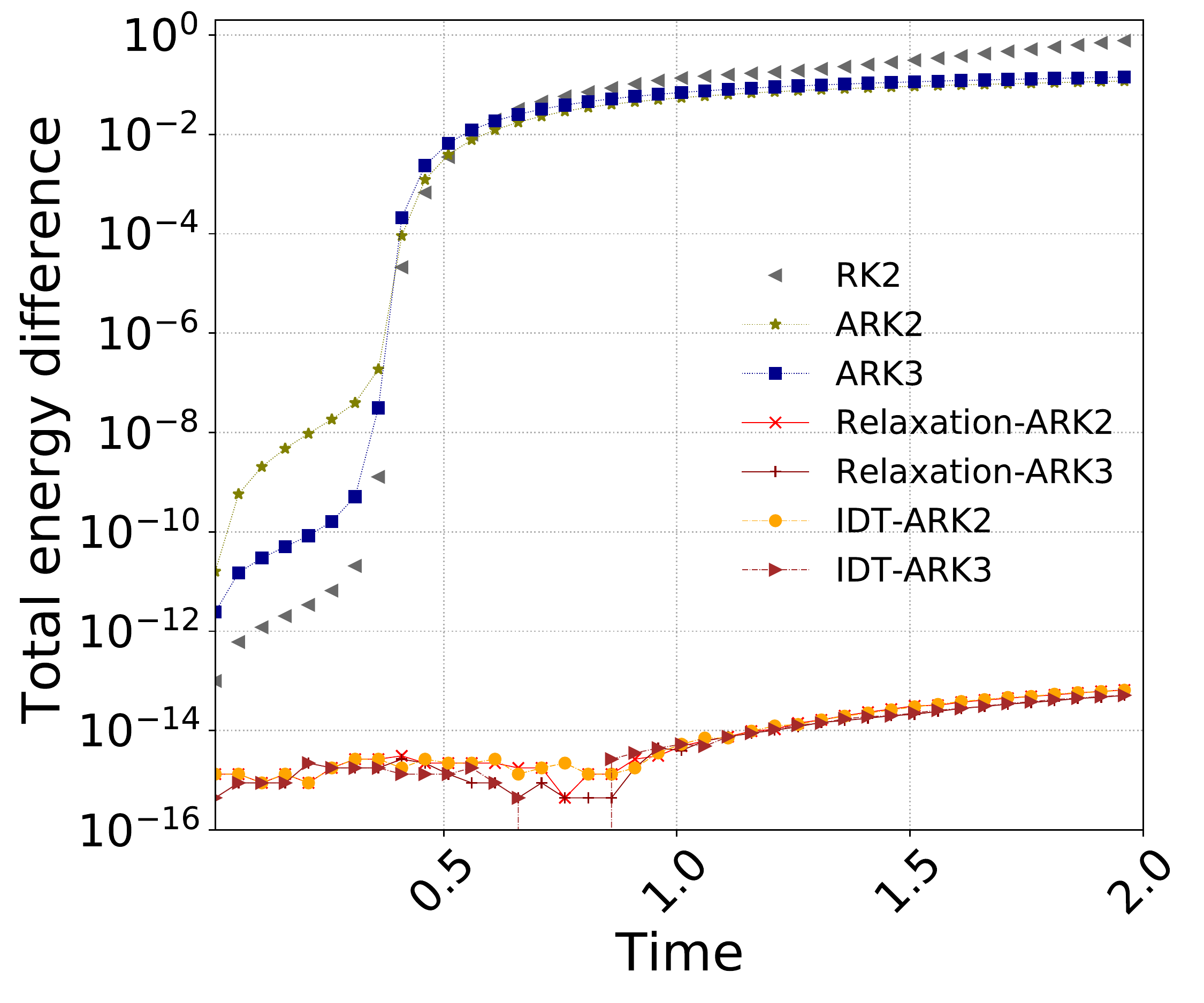}
      \caption{Total energy difference (EC)}
      \figlab{pde-burgers-ec-energyhistory-ark}
  \end{subfigure} 
  \caption{Histories of total entropy and its difference of Gaussian example for the Burgers equation with EC flux: 
      the relaxation methods with EC flux conserve the total energy within $\mc{O}(10^{-13})$.  }
   \figlab{pde-burgers-ec-totalenergyhistory-ark}
\end{figure}

We show snapshots at $t=1$ in Figure \figref{pde-burgers-ec-ss-ark-comparison}. 
All numerical solutions suffer from high-frequency noise arising from the Gibbs phenomenon 
in the presence of a shock. 
However, the numerical solutions do not blow up thanks to the skew-symmetric formulation \cite{gassner2013skew}.
Compared with RK2, ARK2 dramatically eliminates the high-frequency oscillation. 
Relaxation-ARK2 and IDT-ARK2 also reduce the high-frequency oscillation but not as significantly as ARK2. 

\begin{figure} 
  \centering  
  \begin{subfigure}[snapshot]{0.45\linewidth}
      \includegraphics[trim=1.5cm 1.0cm 1.5cm 1.0cm, width=0.9\columnwidth]{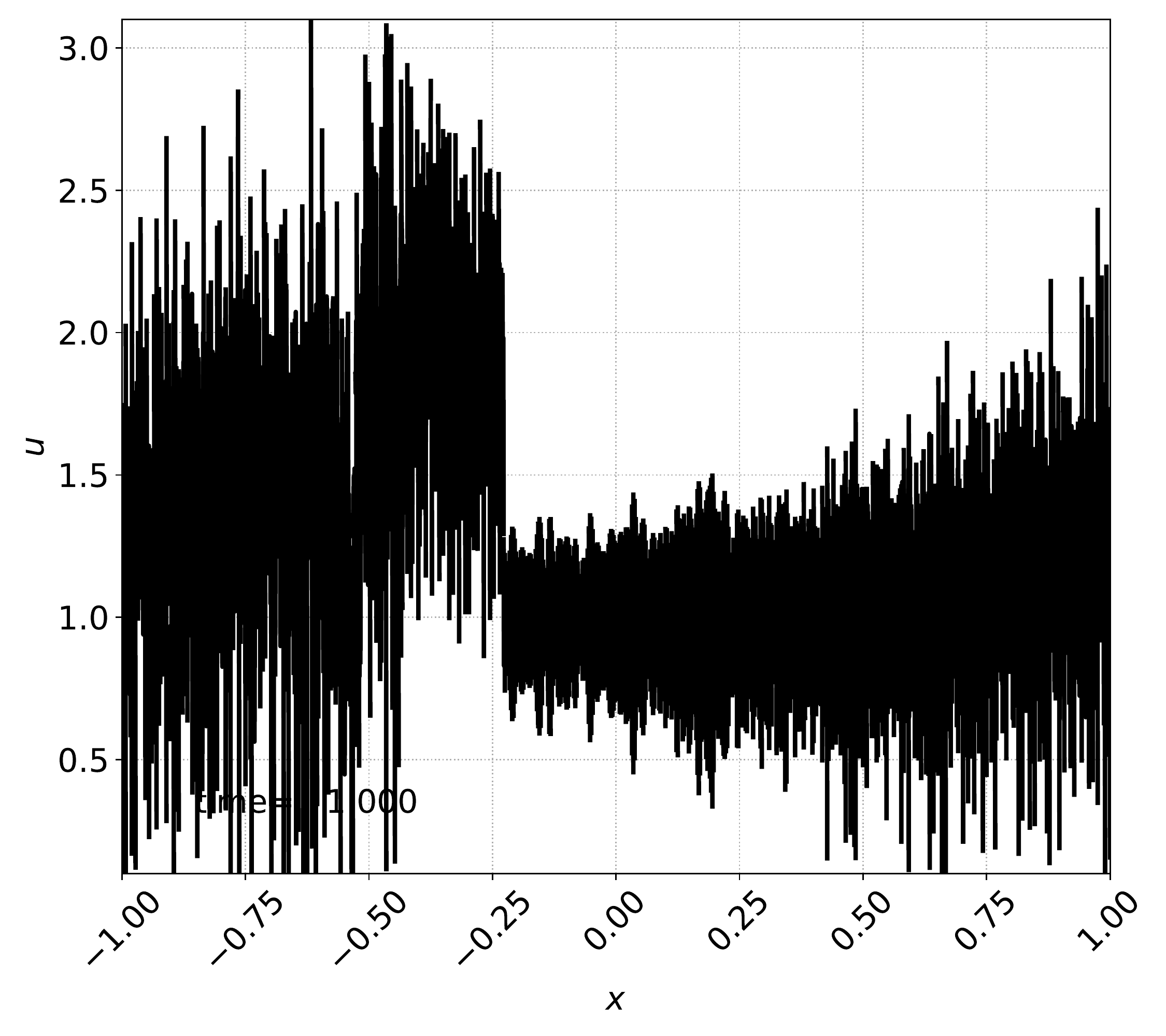}
      \caption{RK2 }
      \figlab{pde-burgers-ss-ec-ark-t1-rk2}
  \end{subfigure} %
  \begin{subfigure}[snapshot]{0.45\linewidth}
    \includegraphics[trim=1.5cm 1.0cm 1.5cm 1.0cm, width=0.9\columnwidth]{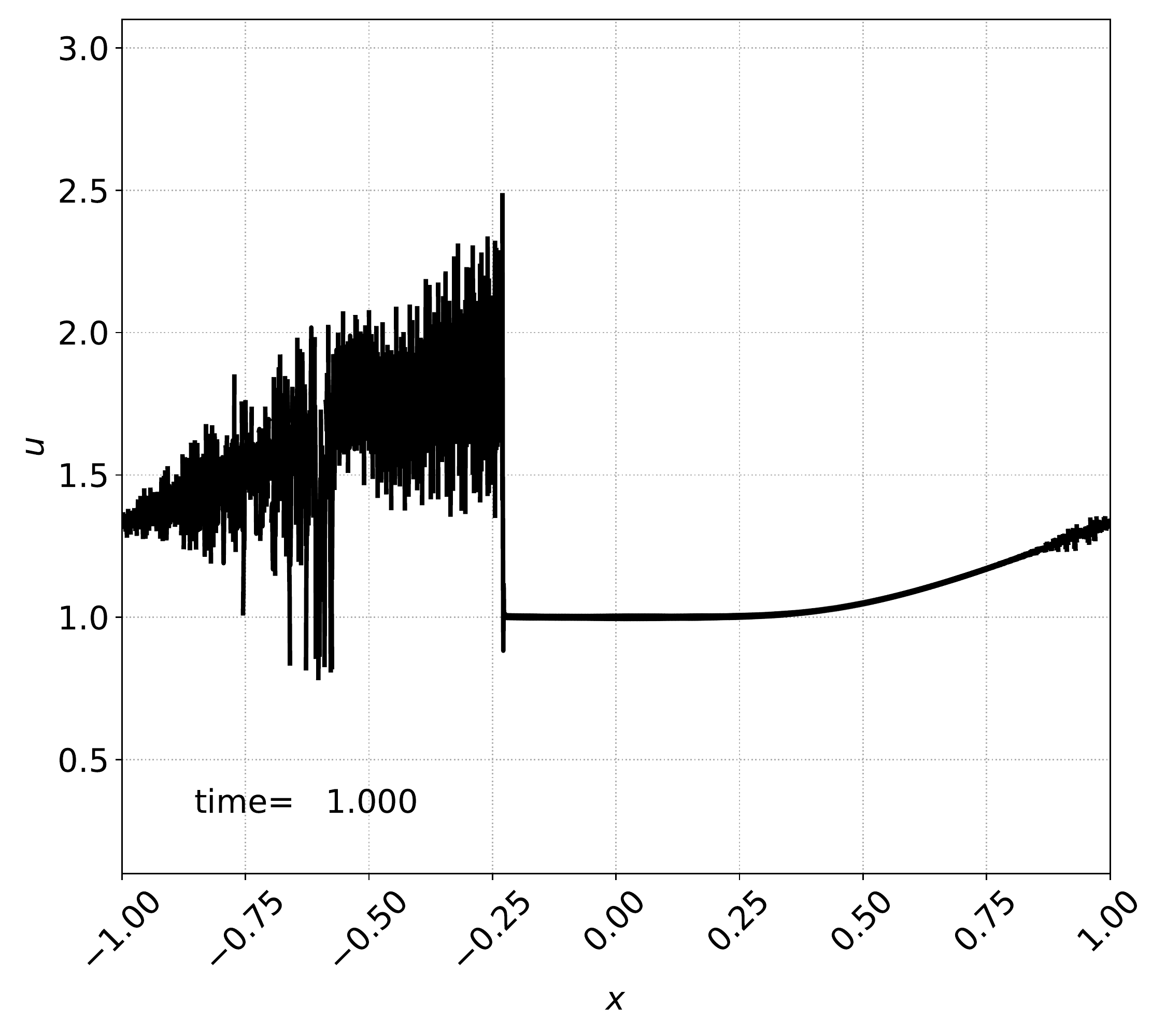}
    \caption{ARK2 }
    \figlab{pde-burgers-ss-ec-ark-t1-ark2}
  \end{subfigure} %
  \begin{subfigure}[snapshot]{0.45\linewidth}
    \includegraphics[trim=1.5cm 1.0cm 1.5cm 0.0cm, width=0.9\columnwidth]{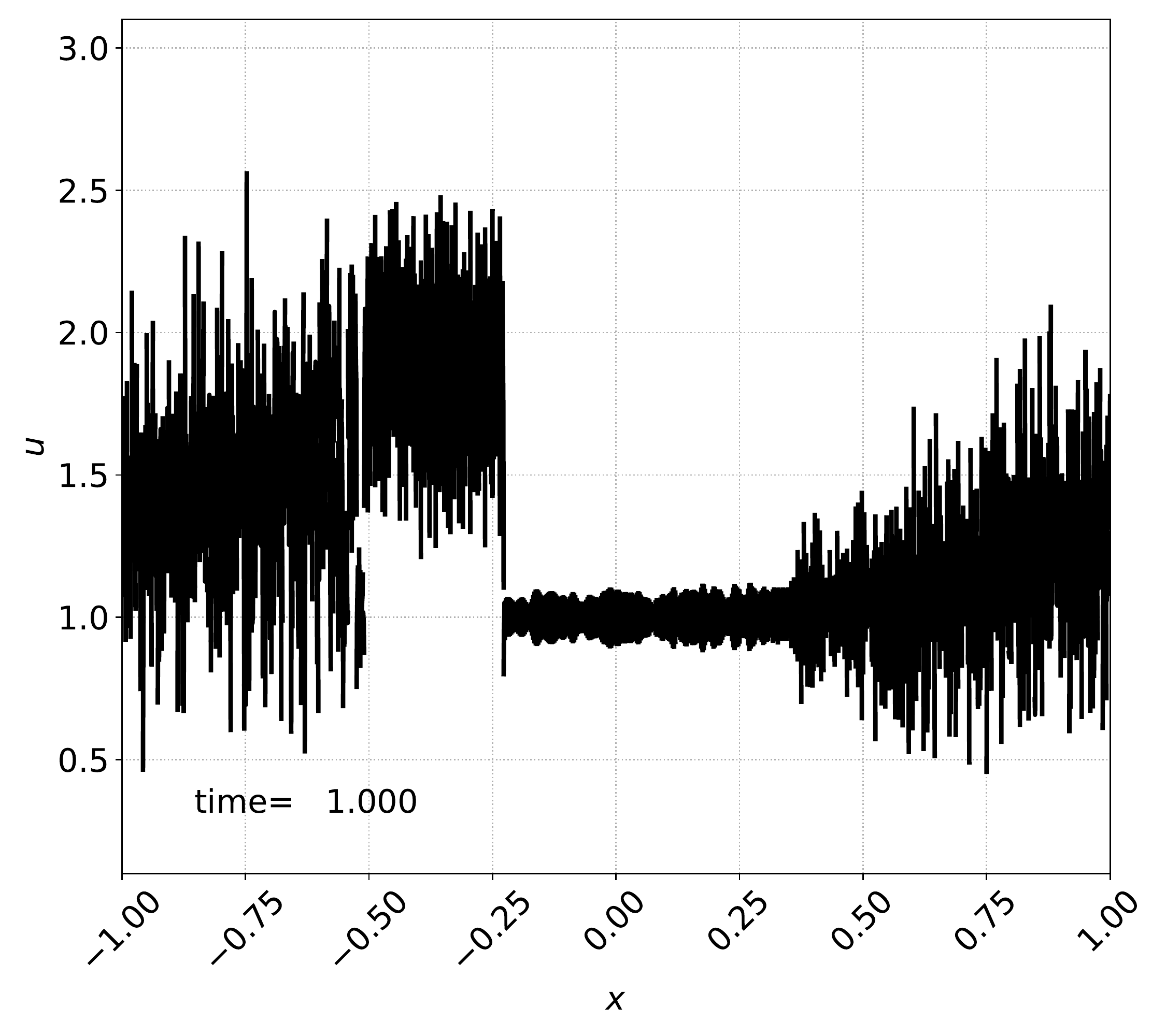}
    \caption{Relaxation-ARK2}
    \figlab{pde-burgers-ss-ec-ark-t1-relaxark2}
  \end{subfigure} 
  \begin{subfigure}[snapshot]{0.45\linewidth}
    \includegraphics[trim=1.5cm 1.0cm 1.5cm 0.0cm, width=0.9\columnwidth]{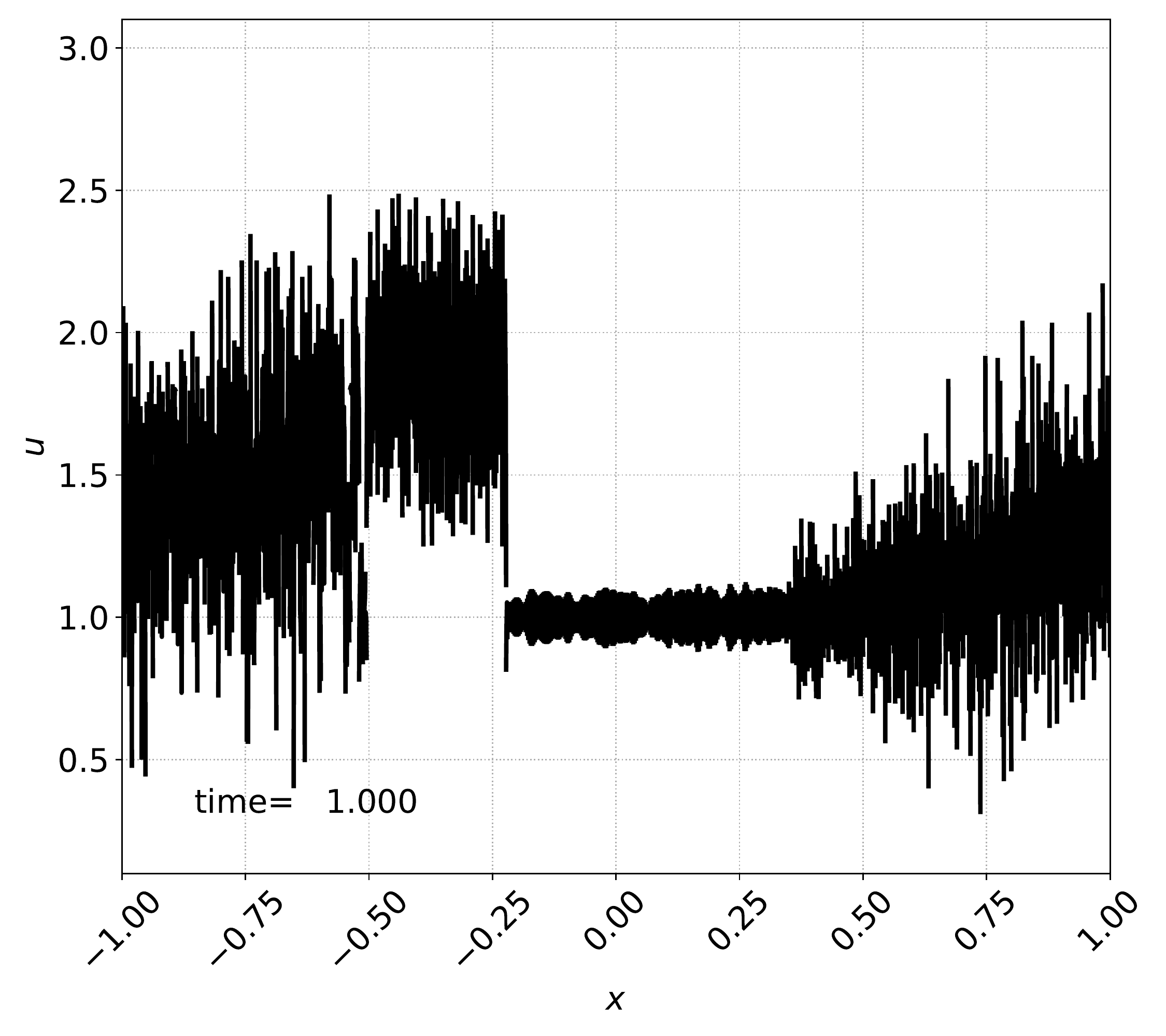}
    \caption{IDT-ARK2}
    \figlab{pde-burgers-ss-ec-ark-t1-idxark2}
  \end{subfigure} 
  \caption{Snapshots of Gaussian profile for the Burgers equation at $t=1$: (a) RK2, (b) ARK2, (c) Relaxation-ARK2, and (d) IDT-ARK2.
  The time step size of RK2 is taken as $\dt_{RK}=3.125\times 10^{-5}$, 
  whereas the time step sizes of ARK2, Relaxation-ARK2, and IDT-ARK2 have $5\times \dt_{RK}$.
  The domain is discretized with a uniform mesh of $N=3$ and $N_E=800$. 
  }
  \figlab{pde-burgers-ec-ss-ark-comparison}
\end{figure}

Next we examine the entropy-stable properties of the ARK methods. 
We perform the simulations for $t\in \LRs{0,2}$ with $N=3$ and $N_E=800$. 
The time step size of RK2 is taken as $\dt_{RK}=2.5 \times 10^{-4}$, 
whereas the time step sizes of the other methods including the ARK2 method have $2.5\times \dt_{RK}$.
\footnote{
  RK2 with $\dt_{RK}=5\times 10^{-4}$ leads to blowup of the numerical solution.
}
Compared with EC flux, ES flux substantially eliminates numerical oscillations but still not enough 
to remove nonphysical oscillations near shocks. 
Thus, we additionally apply the limiter in \eqnref{shock-limiter-burgers-cs17} 
to a marched solution at every time step. 

The snapshots at $t=1$ are reported in Figure \figref{pde-burgers-lf-ss-ark-comparison}.
All the methods with the limiter 
successfully eliminate the spurious oscillations near the shock front. 
The shock front, located near $x=-0.25$, is well captured for all methods with/without the limiter in general.
However, the IDT-ARK2 method with the limiter shows
the shock position error compared with other methods.

\begin{figure} 
  \centering  
  \begin{subfigure}[snapshot]{0.45\linewidth}
    \includegraphics[trim=1.5cm 1.0cm 1.5cm 1.0cm, width=0.9\columnwidth]{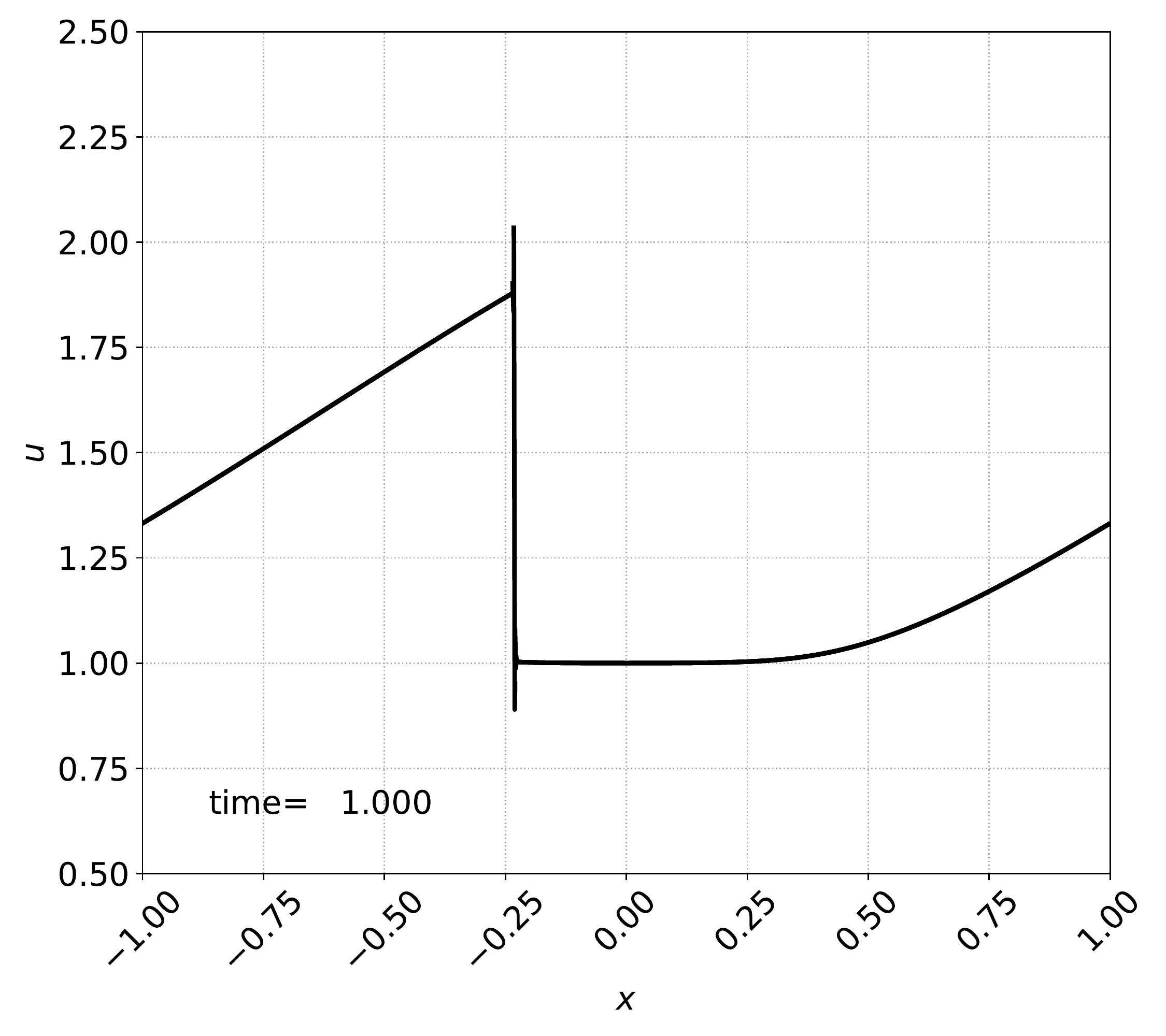}
    \caption{RK2 }
    \figlab{burgers-lf-ark-t1-rk2}
  \end{subfigure} %
  \begin{subfigure}[snapshot]{0.45\linewidth}
    \includegraphics[trim=1.5cm 1.0cm 1.5cm 1.0cm, width=0.9\columnwidth]{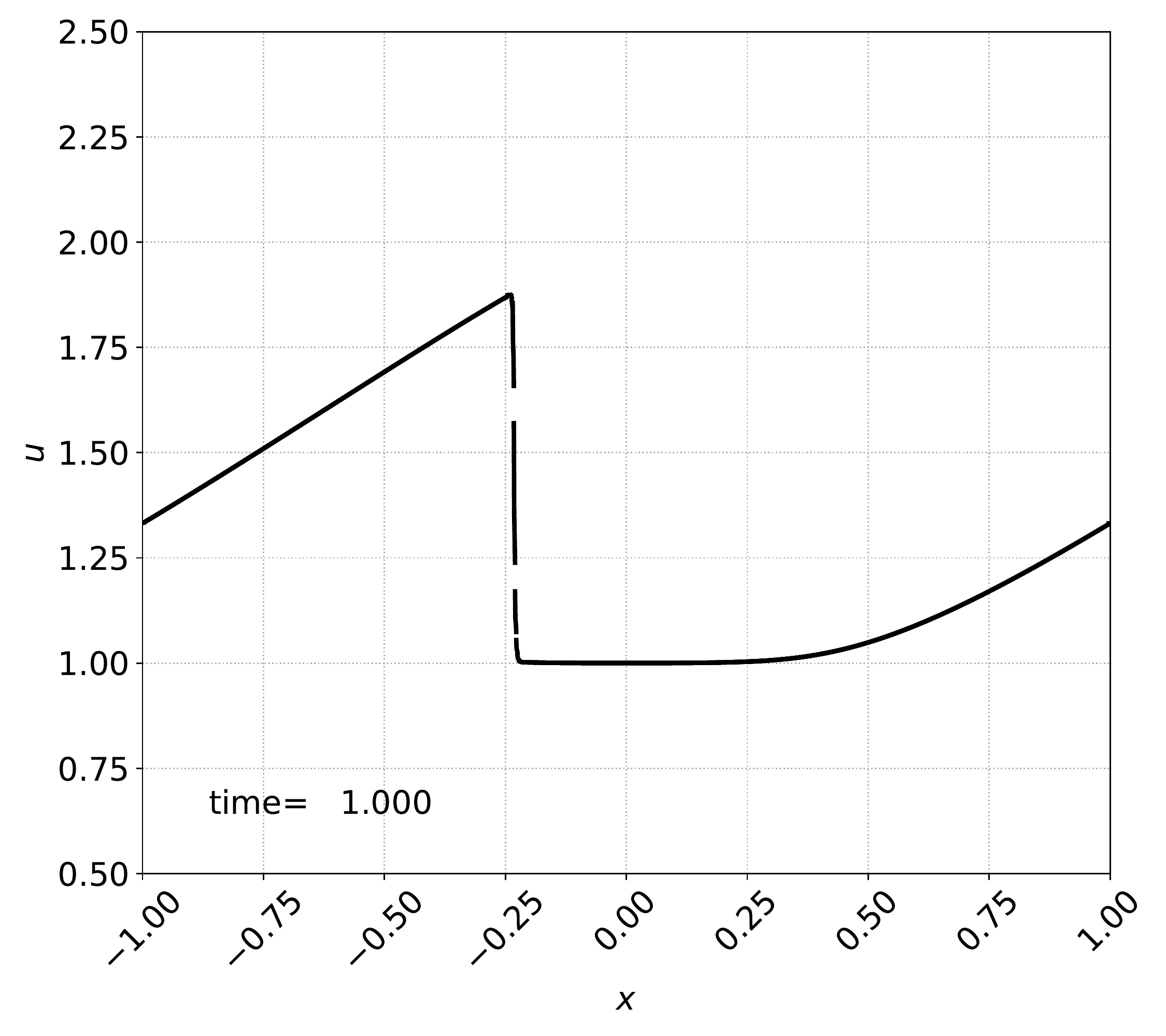}
    \caption{RK2 w/ limiter}
    \figlab{burgers-lf-ark-t1-rk2-cs17}
  \end{subfigure} 
  \begin{subfigure}[snapshot]{0.45\linewidth}
    \includegraphics[trim=1.5cm 1.0cm 1.5cm 0.0cm, width=0.9\columnwidth]{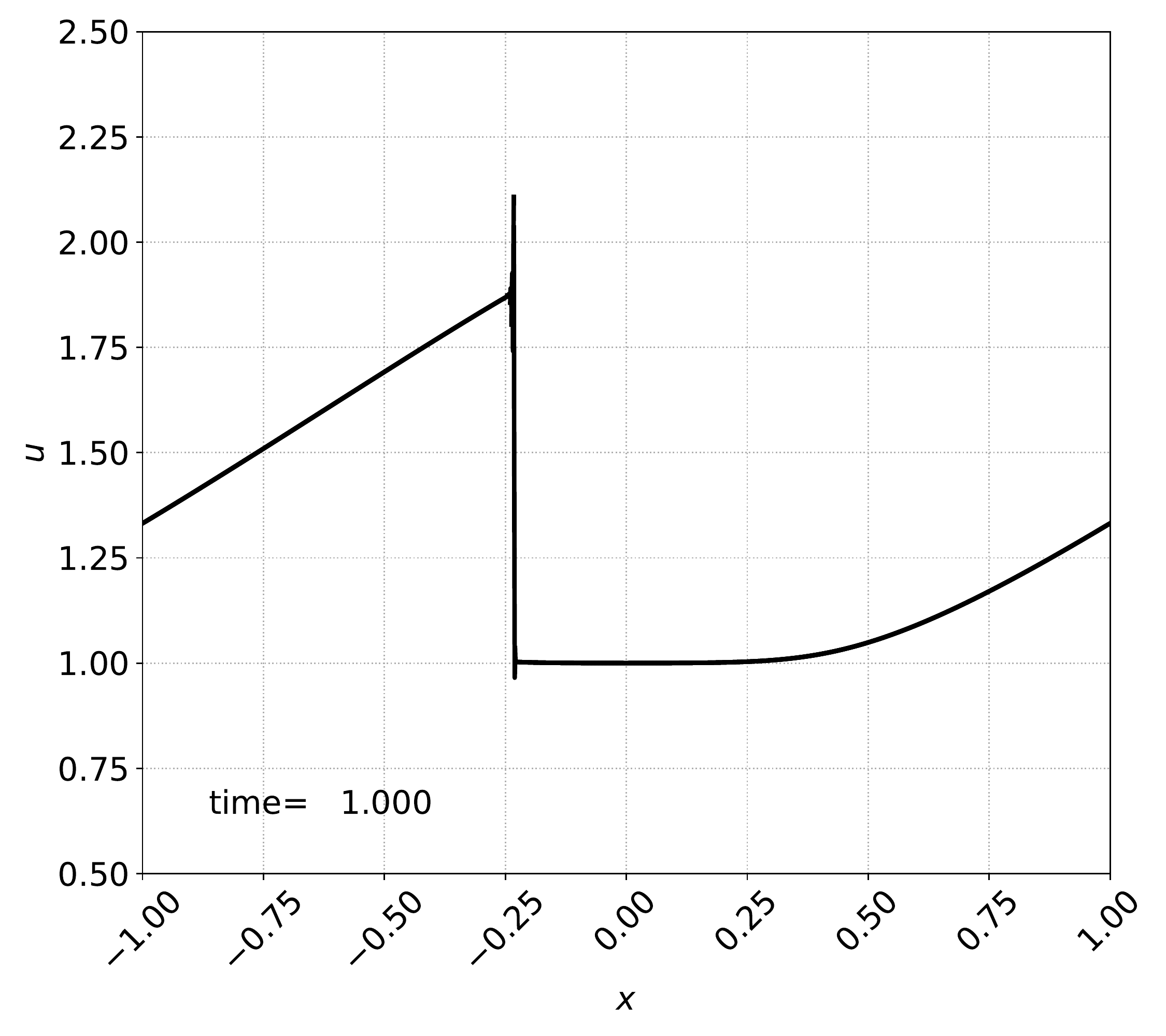}
    \caption{ARK2}
    \figlab{burgers-lf-ark-t1-ark2}
  \end{subfigure} 
  \begin{subfigure}[snapshot]{0.45\linewidth}
    \includegraphics[trim=1.5cm 1.0cm 1.5cm 0.0cm, width=0.9\columnwidth]{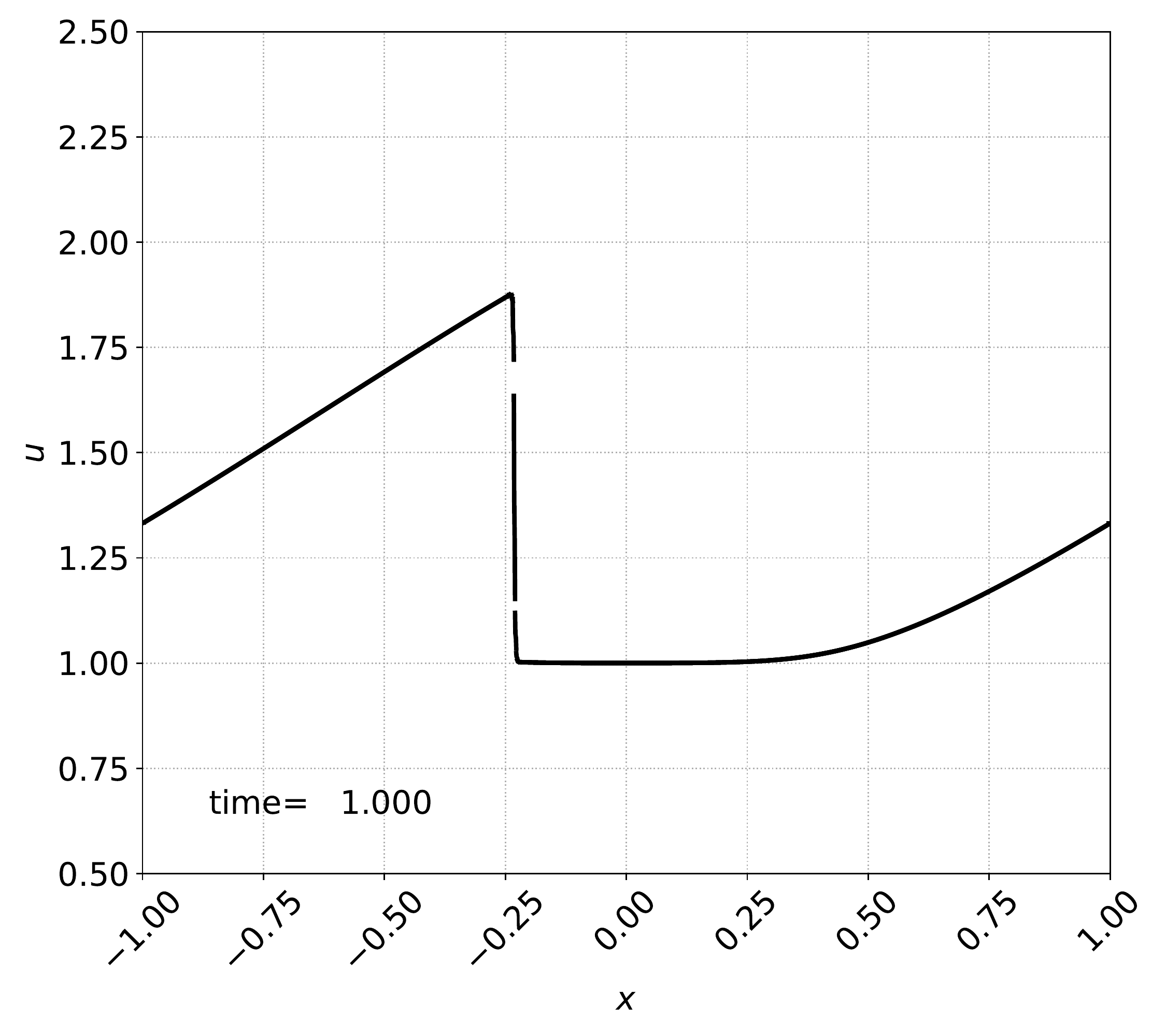}
    \caption{ARK2 w/ limiter}
    \figlab{burgers-lf-ark-t1-ark2-cs17}
  \end{subfigure} 
  \begin{subfigure}[snapshot]{0.45\linewidth}
    \includegraphics[trim=1.5cm 1.0cm 1.5cm 0.0cm, width=0.9\columnwidth]{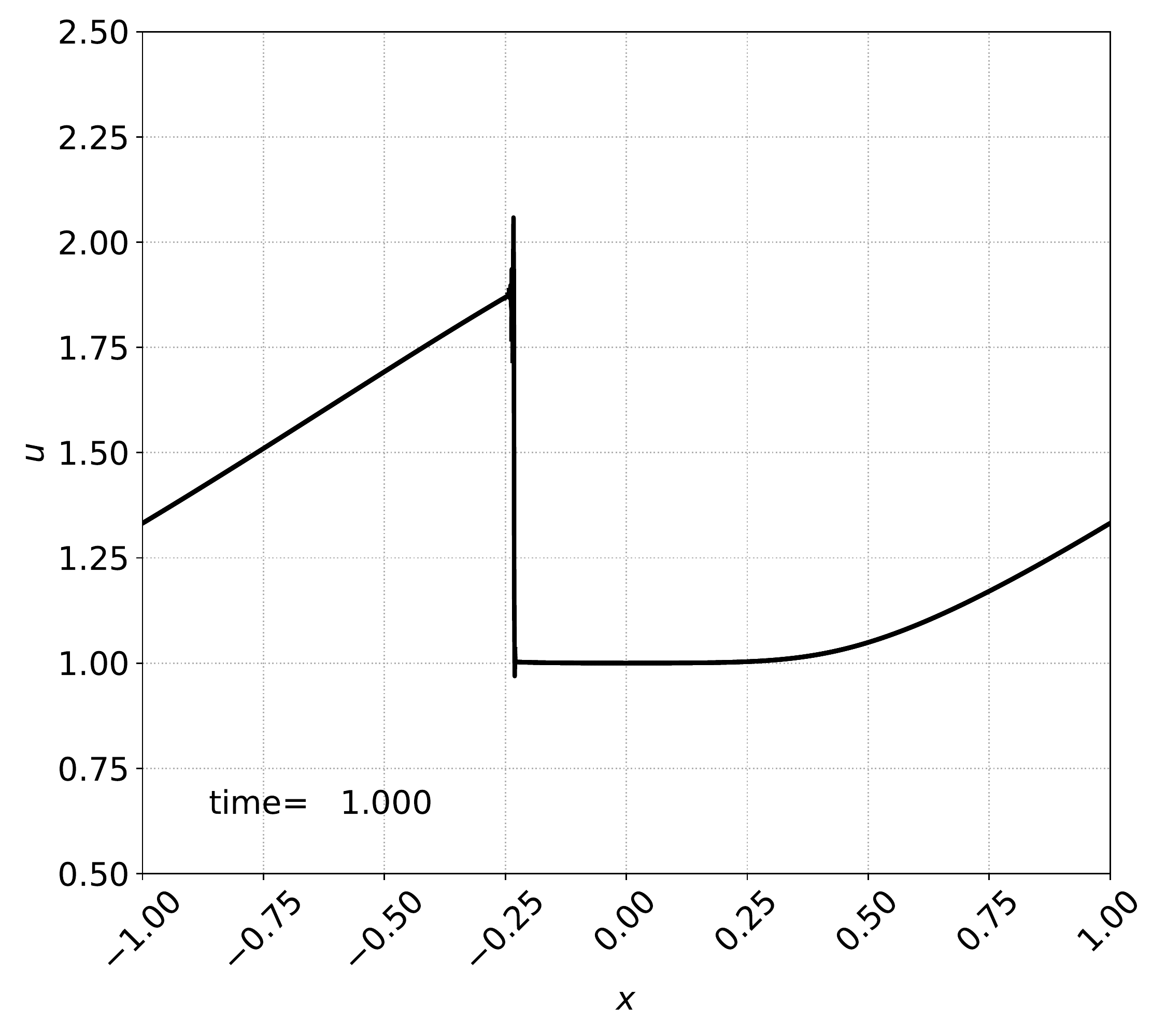}
    \caption{Relaxation-ARK2}
    \figlab{burgers-lf-ark-t1-relaxed-ark2}
  \end{subfigure} 
  \begin{subfigure}[snapshot]{0.45\linewidth}
    \includegraphics[trim=1.5cm 1.0cm 1.5cm 0.0cm, width=0.9\columnwidth]{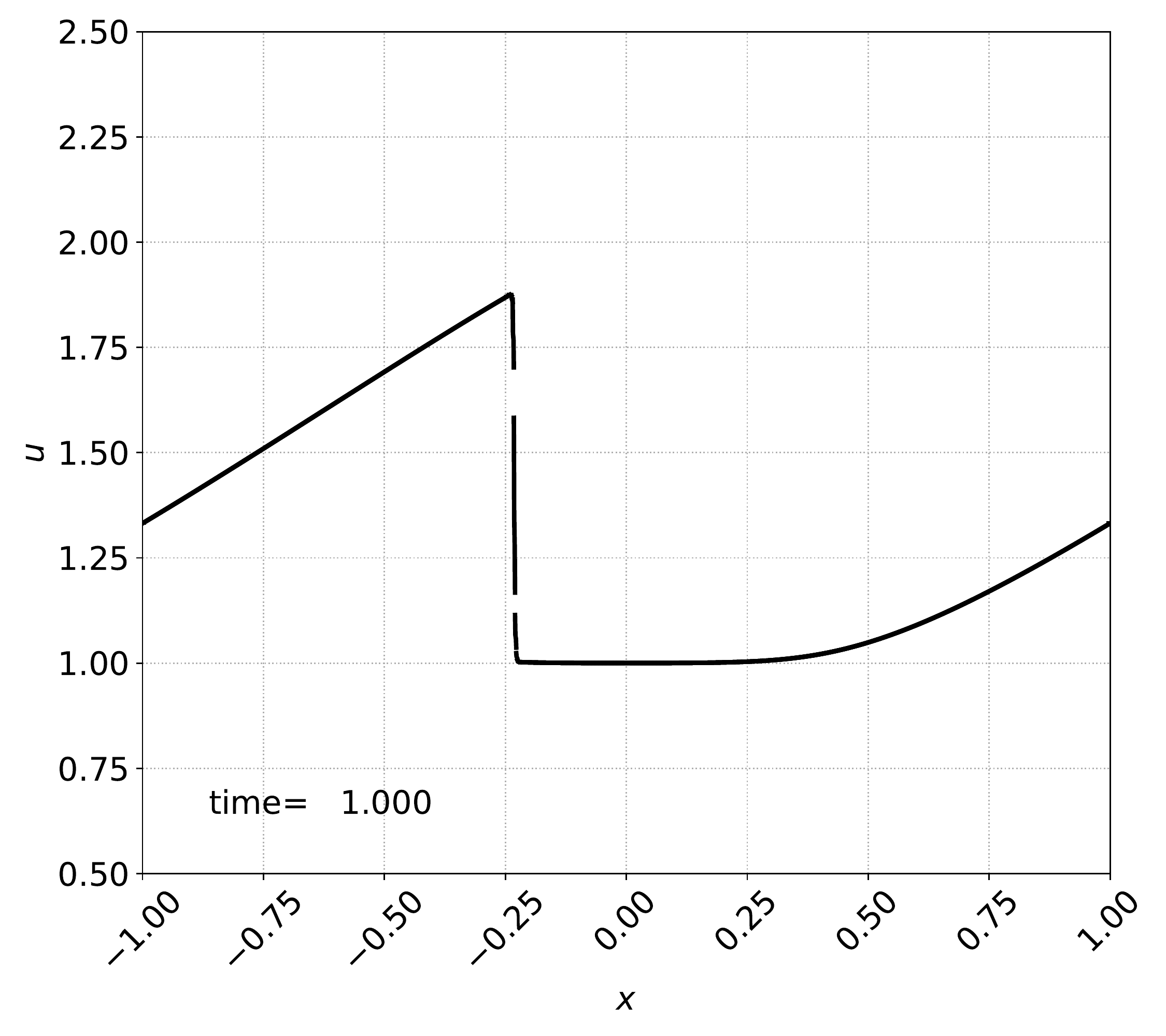}
    \caption{Relaxation-ARK2 w/ limiter}
    \figlab{burgers-lf-ark-t1-relaxed-ark2-cs17}
  \end{subfigure} 
  \begin{subfigure}[snapshot]{0.45\linewidth}
    \includegraphics[trim=1.5cm 1.0cm 1.5cm 0.0cm, width=0.9\columnwidth]{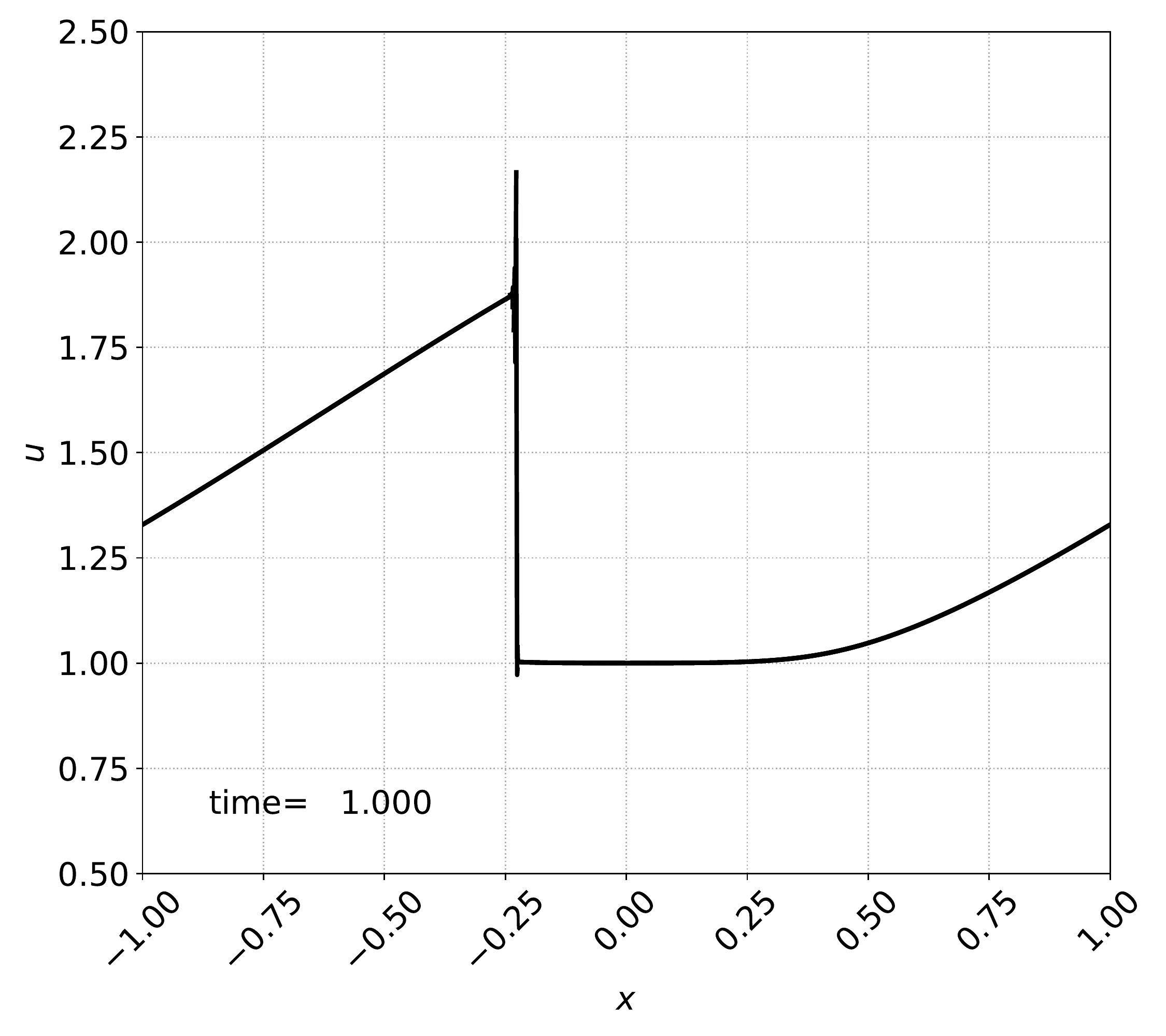}
    \caption{IDT-ARK2}
    \figlab{burgers-lf-ark-t1-relaxed-ark2-idt}
  \end{subfigure} 
  \begin{subfigure}[snapshot]{0.45\linewidth}
    \includegraphics[trim=1.5cm 1.0cm 1.5cm 0.0cm, width=0.9\columnwidth]{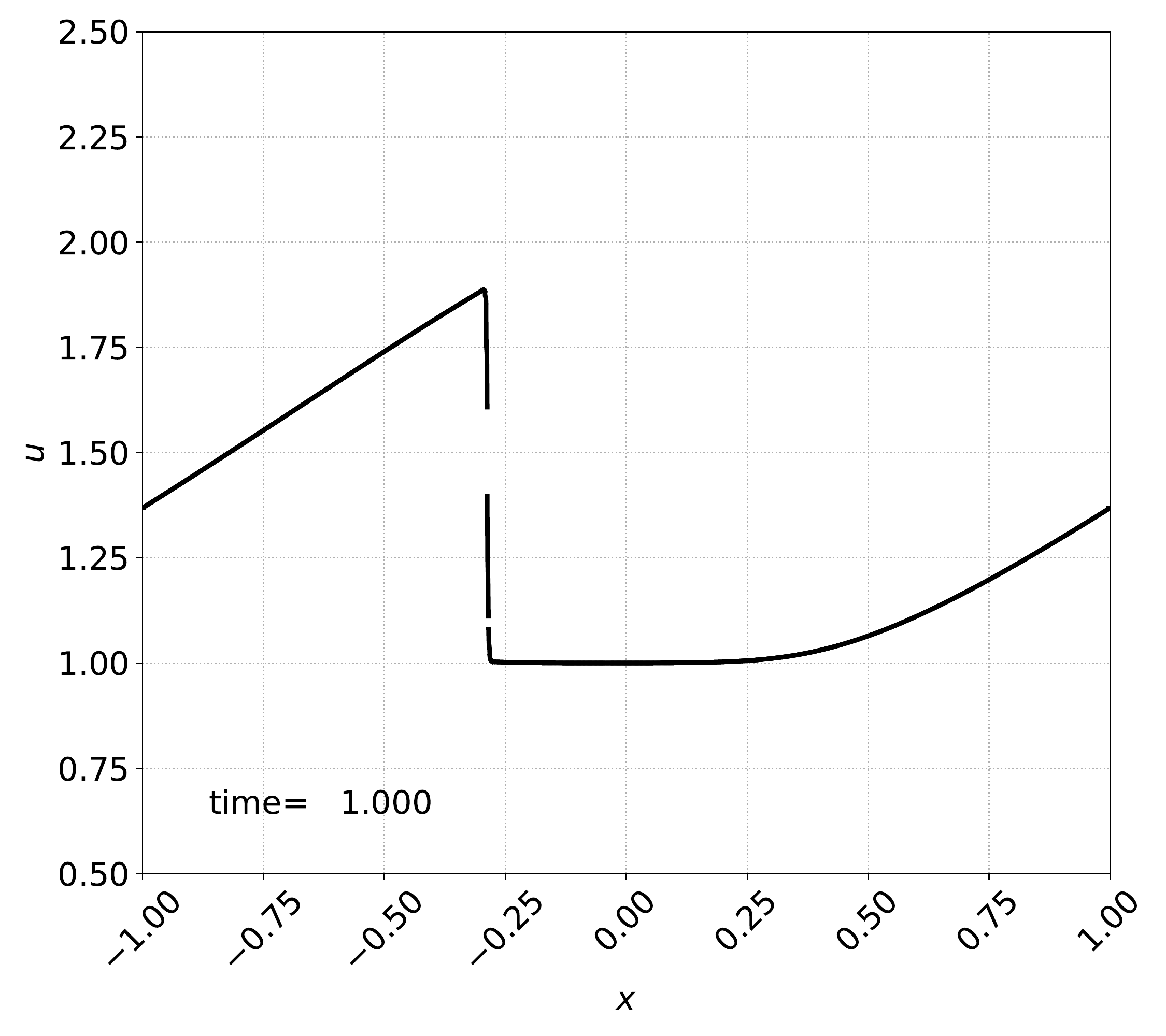}
    \caption{IDT-ARK2 w/ limiter}
    \figlab{burgers-lf-ark-t1-relaxed-ark2-cs17-idt}
  \end{subfigure} 
  \caption{Snapshots of Gaussian profile for the Burgers equation at $t=1$ with entropy-stable (ES) flux 
  for (a) RK2, (b) RK2 with the limiter,
   (c) ARK2, (d) ARK2 with the limiter, 
   (e) Relaxation-ARK2, (f) Relaxation-ARK2 with the limiter, 
   (g) IDT-ARK2, and (h) IDT-ARK2 with the limiter.
  The time step size of RK2 is taken as $\dt_{RK}=2.5 \times 10^{-4}$, 
  whereas the time step sizes of other methods have $2.5\times \dt_{RK}$.
  The domain is discretized with a uniform mesh of $N=3$ and $N_E=800$. 
  }
  \figlab{pde-burgers-lf-ss-ark-comparison}
\end{figure}

In Figure \figref{pde-burgers-lf-totalenergyhistory-ark} the time histories of the total energy and its difference are reported 
for 
ARK2, Relaxation-ARK2, IDT-ARK2,
ARK3, Relaxation-ARK3, and IDT-ARK3 
with/without the limiter.
(The RK2 result is also reported for comparison.)
All the methods with ES flux show entropy-stable behaviors regardless of applying the limiter.
This observation agrees with the work in \cite[Theorem 3.8]{chen2017entropy}. 
   
\begin{figure} 
  \centering
  \begin{subfigure}[Total energy history (ES)]{0.45\linewidth}
      \includegraphics[trim=0.2cm 0.2cm 0.2cm 0.2cm, width=0.95\columnwidth]{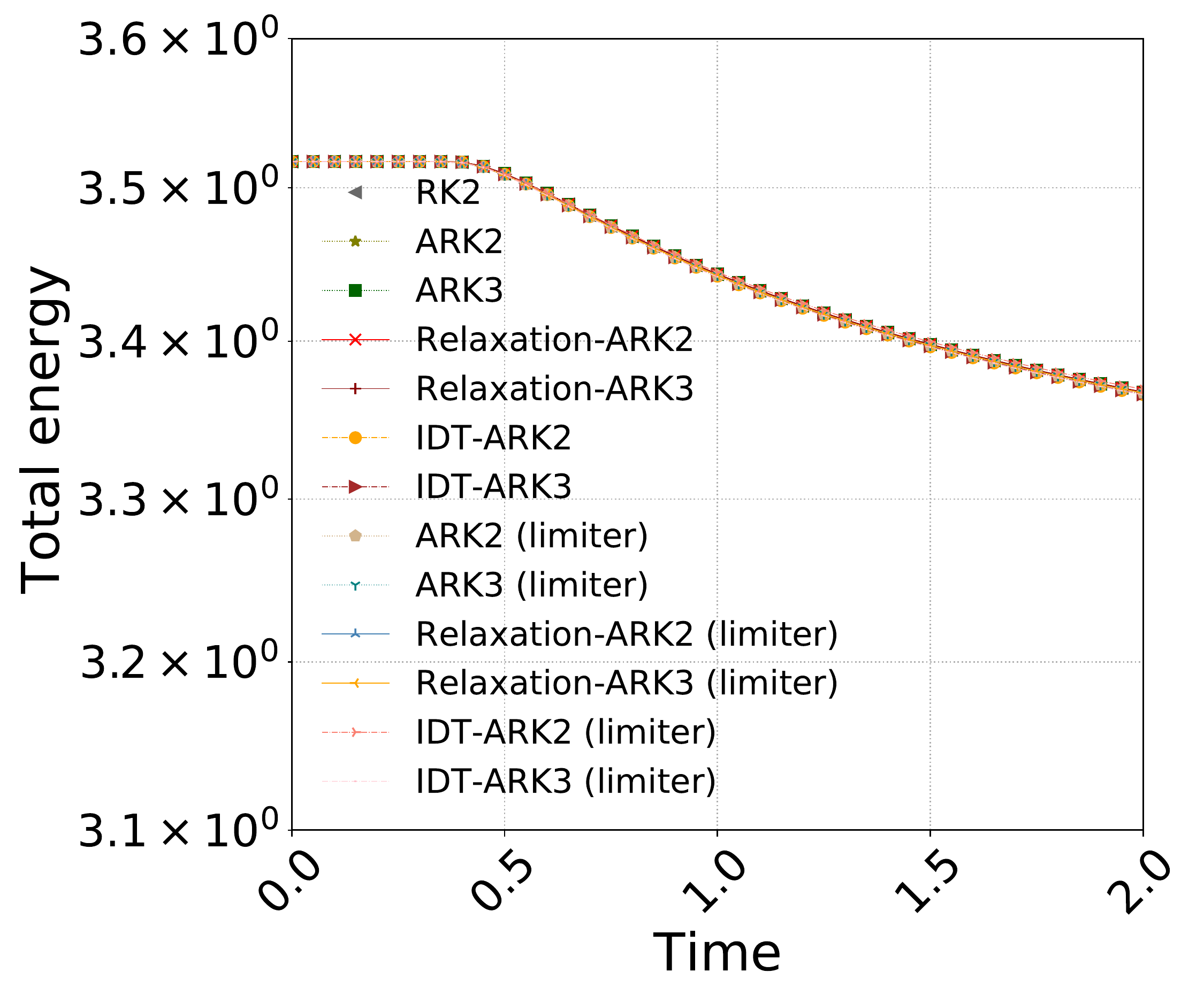}
      \caption{Total energy history (ES)}
      \figlab{pde-burgers-lf-energyloss-ark}
  \end{subfigure} %
  \begin{subfigure}[Total energy difference (ES)]{0.45\linewidth}    
      \includegraphics[trim=0.2cm 0.2cm 0.2cm 0.2cm, width=0.95\columnwidth]{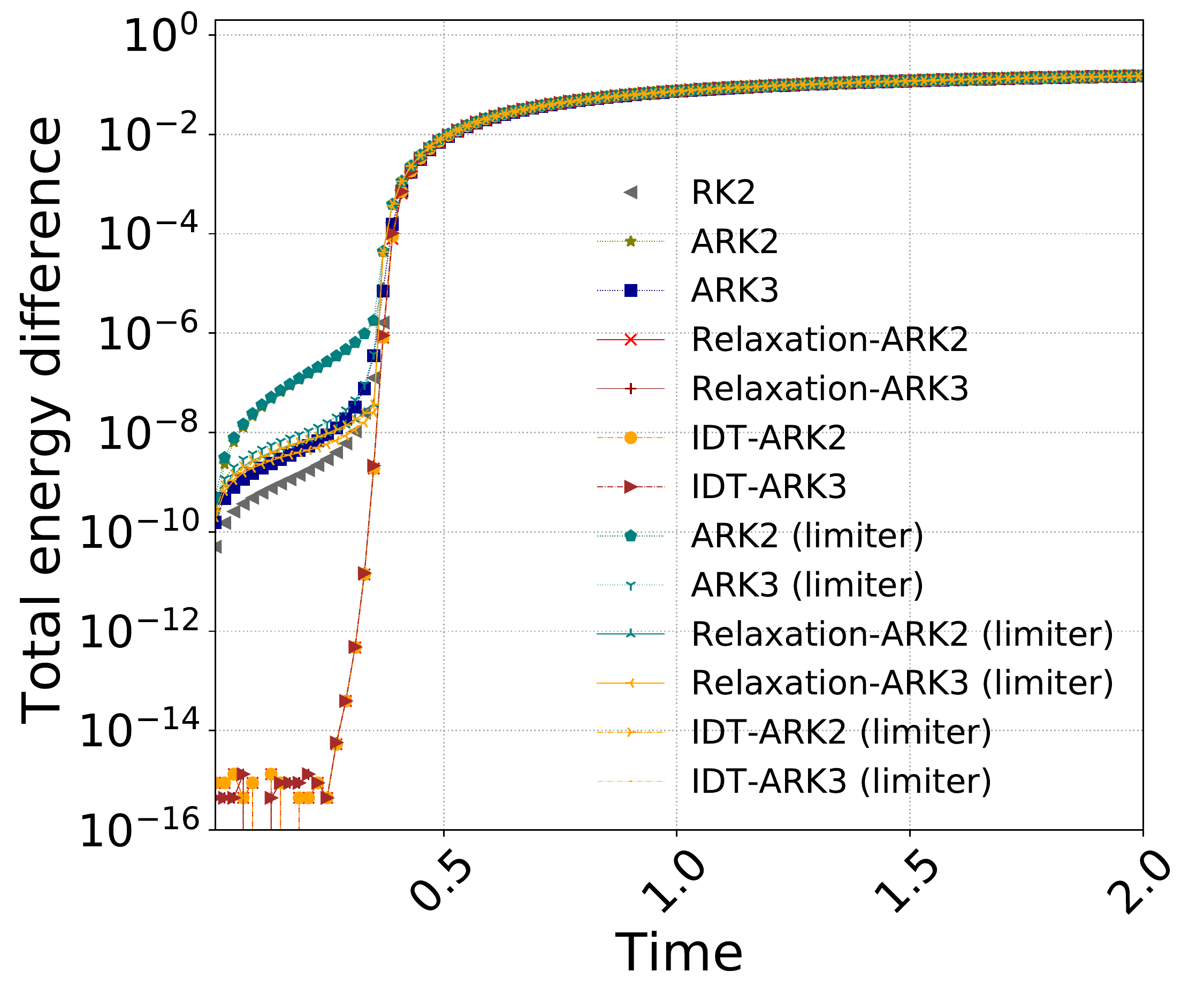}
      \caption{Total energy difference (ES)}
      \figlab{pde-burgers-lf-energyhistory-ark}
  \end{subfigure} 
  \caption{Gaussian example for the Burgers equation: histories of total energy and its difference with ES flux for ARK methods (standard, relaxation, and IDT). 
      All the methods with ES flux show entropy-stable behaviors.
       }
   \figlab{pde-burgers-lf-totalenergyhistory-ark}
\end{figure}

In Figure \figref{pde-burgers-lf-totamassloss-ark} we also plot the time series of the total mass (a linear invariant) 
for ARK2, Relaxation-ARK2, IDT-ARK2,
ARK3, Relaxation-ARK3, and IDT-ARK3 
with/without limiter, as well as RK2.
As expected, all the methods preserve the total mass within $\mc{O}(10^{-14})$ difference.

\begin{figure} 
  \centering
  \includegraphics[trim=0.2cm 0.2cm 0.2cm 0.2cm, width=0.95\columnwidth]{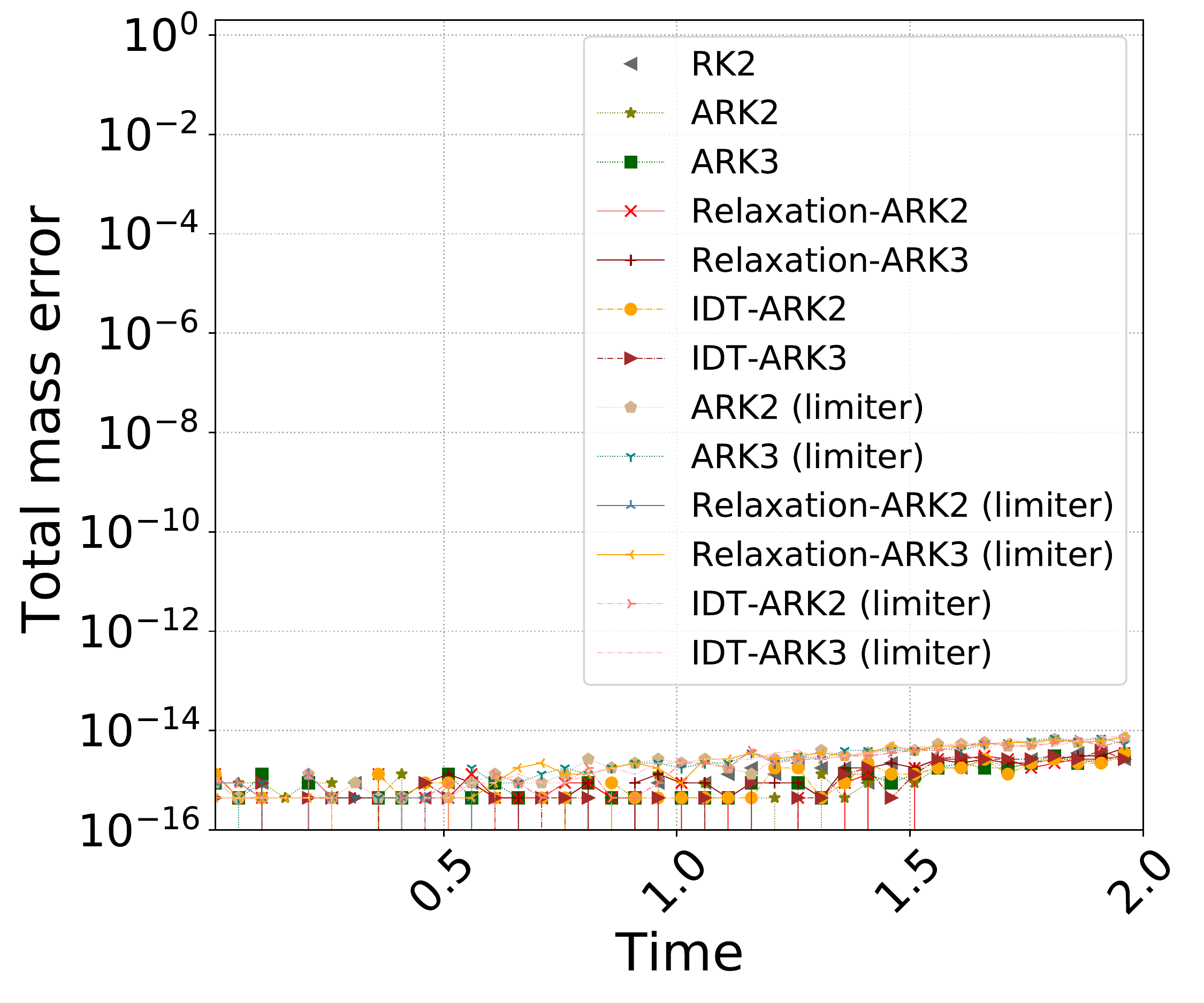}
  \caption{Histories of total mass difference of Gaussian example for the Burgers equation with ES flux: 
  all the methods preserve the total mass within $\mc{O}(10^{-14})$ difference.}
  \figlab{pde-burgers-lf-totamassloss-ark}
\end{figure}

\subsection{Entropy-Stable Multirate Methods for the Burgers Equation on a Nonuniform Mesh}

\begin{figure} 
  \centering  
  \begin{subfigure}[hx]{0.45\linewidth}
    \includegraphics[trim=1.0cm 1.0cm 0.0cm 1.0cm, width=0.95\columnwidth]{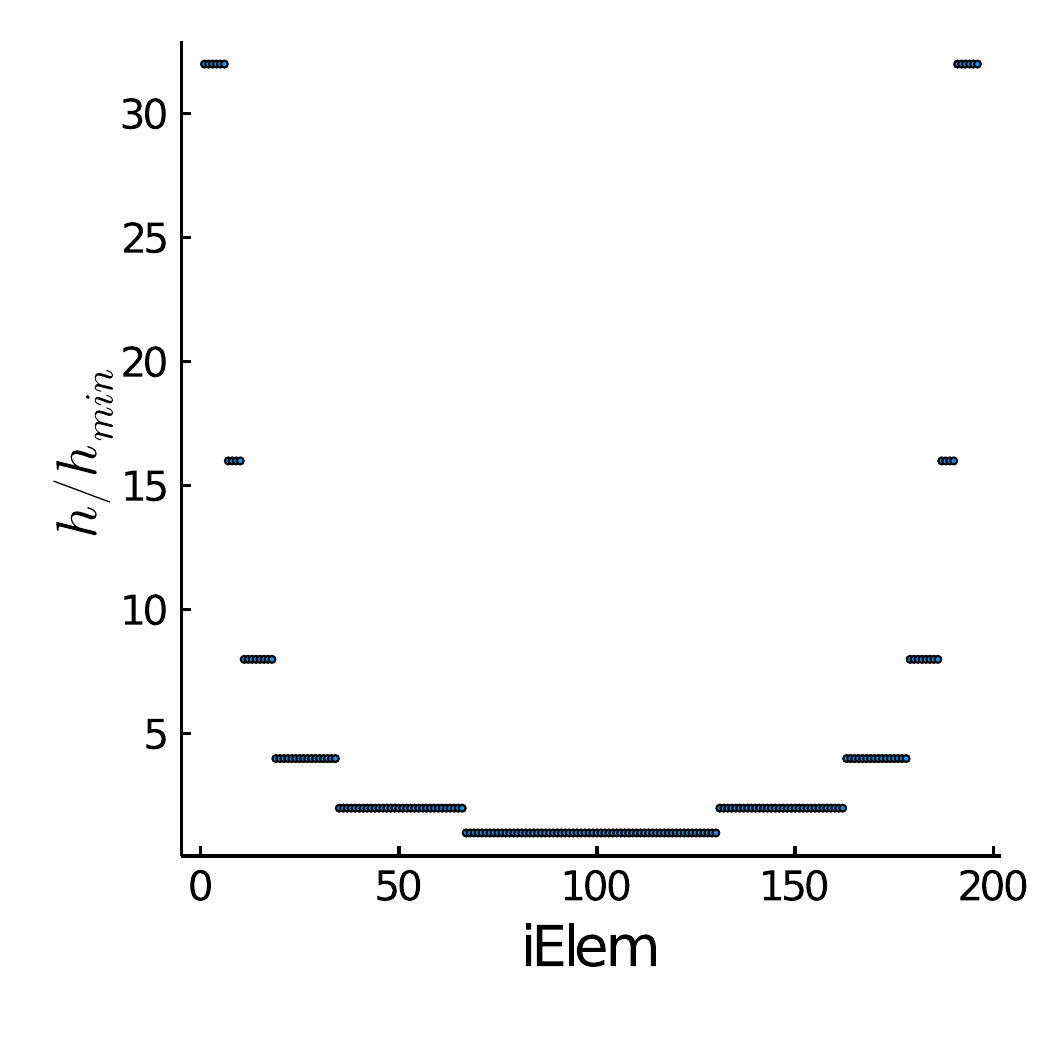}
    \caption{$h/h_{min}$}
    \figlab{pde-burgers-mrk2-ex-hx}
  \end{subfigure} %
  \begin{subfigure}[MR Levels]{0.45\linewidth}
    \includegraphics[trim=0.2cm 1.0cm 0.0cm 1.0cm, width=0.95\columnwidth]{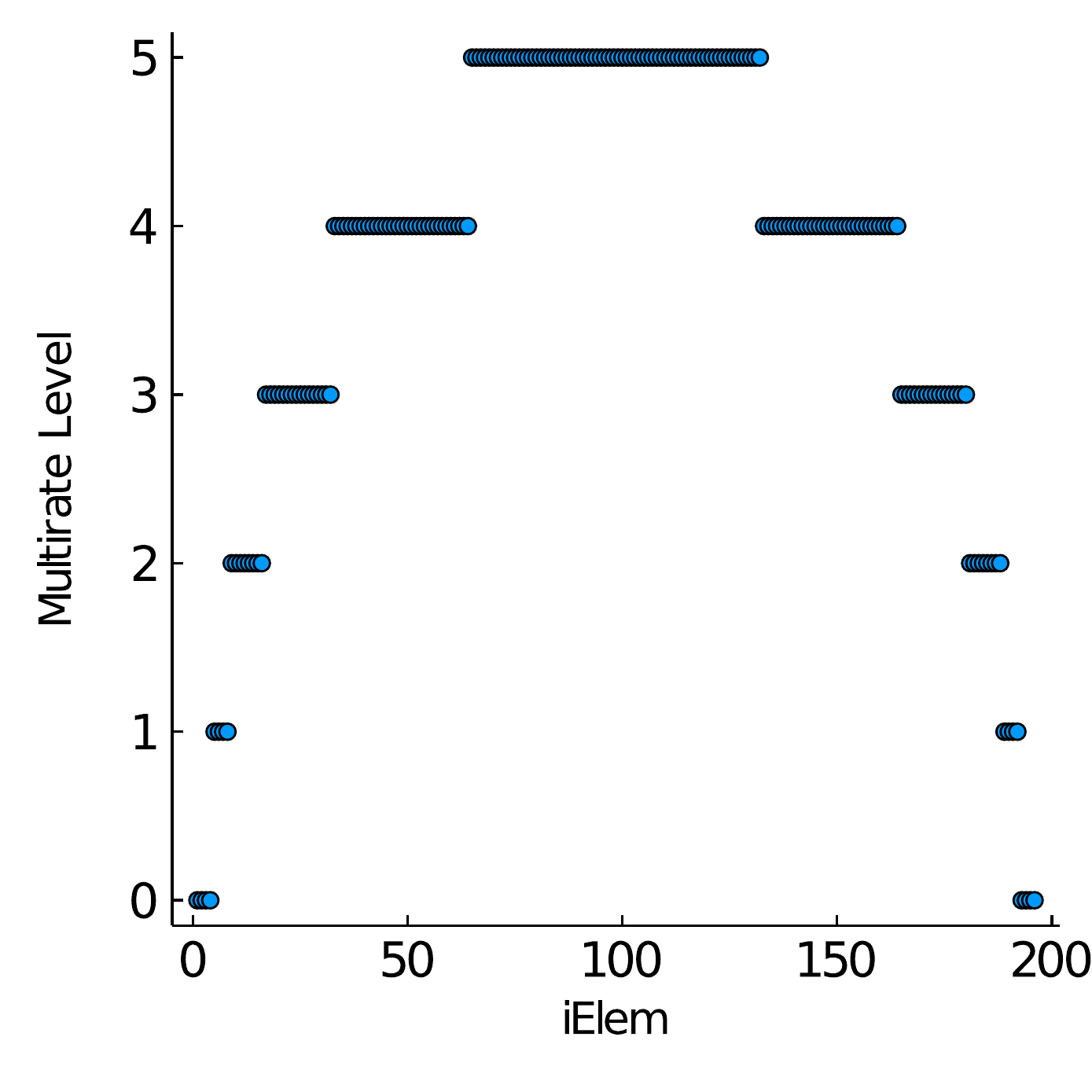}
    \caption{MR levels for $L_{\max}=5$}
    \figlab{pde-burgers-mrk2-ex-mrlev}
  \end{subfigure} %
  \caption{(a) Ratio of element sizes with respect to the minimum size of the elements 
  and (b) multirate levels of elements for $L_{\max}=5$. 
  Here, $L$ is the maximum multirate (MR) level. 
  The center of the domain is five times refined with a $2:1$ ratio so that the largest element is 32 times bigger than the smallest element.
  Multirate level is assigned to each element corresponding to the ratio of element sizes.  }
  \figlab{pde-burgers-mrk2-ex-mrlevels}
\end{figure}

We consider MRK2 methods on a nonuniform mesh for handling geometric-induced stiffness. 
A one-dimensional domain is five times refined at the center of the domain with a 2:1 grid ratio 
so that the biggest element is 32 times larger than the smallest element, as shown in Figure \figref{pde-burgers-mrk2-ex-hx}.
In the  MRK2 algorithm, based on the ratio of the element sizes, multirate levels are assigned to each element 
in Figure \figref{pde-burgers-mrk2-ex-mrlev}, where the highest multirate level is five. 

We first perform temporal convergence studies with the entropy-conserving  and entropy-stable  fluxes 
for MRK2, Relaxation-MRK2, and IDT-MRK2 methods without using the limiter.
We take the RK4 solution (with the fixed step size of $\dt = 5 \times 10^{-6}$, $N=3$, and $N_E=196$) as the ``ground truth" solution 
and measure the relative errors at $t = 0.2$ (before forming a shock) in Table \tabref{burgers-tconv-gaussian-mrk2}. We also report the relative errors at $t=1$ (after forming the shock) for the entropy-stable flux. 

The numerical solutions converge to the reference RK4 solution with second-order accuracy for the MRK2, Relaxation-MRK2, and IDT-MRK2 methods 
regardless of the EC/ES fluxes at $t=0.2$. 
The error differences among MRK2, Relaxation-MRK2, and IDT-MRK2 are within $\mc{O}(10^{-8})$. 
In particular, IDT-MRK2 shows  second-order accuracy in time. 
This is because 
both Relaxation-MRK2 and IDT-MRK2 have tiny relaxation parameters ($\mc{O}(10^{-5})$),
 and the temporal error of IDT-MRK2 is not accumulated enough. 
This agrees with the previous study 
in \cite[Figure 9.]{ketcheson2019relaxation}, where both IDT-RK2 and Relaxation-RK2 show the second-order rate of convergence in time.
However, at $t=1$,
 the error of IDT-MRK2 is at least sixty times larger than that of Relaxation-MRK2. 
 We also observe that the order of temporal accuracy of IDT-MRK2 drops to one with larger time step sizes. The temporal error of IDT-MRK2 has accumulated to the point where the theoretical convergence rate can be seen.
 This agrees with Figure \figref{pde-burgers-lf-ss-ark-comparison} where the location of the shock front for IDT-MRK2 is slightly behind that of Relaxation-MRK2. 

 \begin{table}[t] 
  \caption{Gaussian example: temporal convergence study for MRK2 methods  performed with EC and ES fluxes  
  on a nonuniform mesh of $N=3$ and $K=196$ without using the limiter. 
  We use the time step sizes with $\dt=0.001\LRc{1, 1/2, 1/4, 1/8, 1/16}$ for EC flux 
  and $\dt=0.0025\LRc{1, 1/2, 1/4, 1/8, 1/16}$ for ES flux. 
  By taking the RK4 solution with the fixed step size of $\dt=5.0\times 10^{-6}$ as the ``ground truth" solution, 
  we measure the relative errors of MRK2, Relaxation-MRK2, and IDT-MRK2 methods at $t=0.2$ (before forming a shock). We also report the relative errors at $t=1.0$ (after forming the shock) for ES flux. } 
  \tablab{burgers-tconv-gaussian-mrk2} 
  \begin{center} 
    \begin{tabular}{*{1}{c}|*{1}{c}|*{2}{c}|*{2}{c}|*{2}{c}} 
      \hline 
      \multirow{2}{*}{$flux$}
      & \multirow{2}{*}{$\dt$}
      & \multicolumn{2}{c}{MRK2} 
      & \multicolumn{2}{c}{Relaxation-MRK2 } 
      & \multicolumn{2}{c}{IDT-MRK2} \tabularnewline 
      & & Error & Order &Error & Order &Error & Order \tabularnewline 
      \hline\hline 
        &1.000e-03&       5.67E-06 &     $-$&       5.68E-06 &     $-$&       5.59E-06 & $-$\tabularnewline
        &5.000e-04&       1.43E-06 &    1.98&       1.43E-06 &    1.99&       1.41E-06 &    1.98\tabularnewline
EC      &2.500e-04&       3.61E-07 &    1.99&       3.61E-07 &    1.99&       3.56E-07 &    1.99\tabularnewline
(t=0.2) &1.250e-04&       9.05E-08 &    2.00&       9.06E-08 &    2.00&       8.94E-08 &    1.99\tabularnewline
        &6.250e-05&       2.27E-08 &    2.00&       2.27E-08 &    2.00&       2.24E-08 &    2.00\tabularnewline    
      \tabularnewline 
        &2.500e-03&       7.43E-05 &     $-$&       7.42E-05 &     $-$&       9.22E-05 &      $-$\tabularnewline
        &1.250e-03&       1.68E-05 &    2.14&       1.68E-05 &    2.14&       1.78E-05 &    2.37\tabularnewline
    ES  &6.250e-04&       4.02E-06 &    2.07&       4.02E-06 &    2.07&       4.04E-06 &    2.14\tabularnewline
(t=0.2) &3.125e-04&       9.83E-07 &    2.03&       9.83E-07 &    2.03&       9.77E-07 &    2.05\tabularnewline
        &1.563e-04&       2.43E-07 &    2.02&       2.43E-07 &    2.02&       2.41E-07 &    2.02\tabularnewline

      \tabularnewline 
          &2.500e-03&       1.46E-03 &     $-$&       1.59E-03 &     $-$&       9.68E-02 &     $-$\tabularnewline
          &1.250e-03&       3.44E-04 &    2.08&       3.01E-04 &    2.40&       5.00E-02 &    0.95\tabularnewline
     ES   &6.250e-04&       8.35E-05 &    2.04&       1.29E-04 &    1.22&       1.36E-02 &    1.88\tabularnewline
   (t=1.0)&3.125e-04&       2.06E-05 &    2.02&       2.25E-05 &    2.53&       3.31E-03 &    2.04\tabularnewline
          &1.563e-04&       5.11E-06 &    2.01&       5.18E-06 &    2.12&       8.22E-04 &    2.01\tabularnewline
            
      \hline\hline 
      \end{tabular} 
  \end{center}     
\end{table}

Next, we examine the entropy conservation of MRK2 methods. 
We perform the simulations for $t\in \LRs{0,2}$ with $N=3$ and $K=784$ ($L_{\max}=5$). 
The time step size of RK2 is taken as $\dt_{RK}=6.25 \times 10^{-6}$, 
whereas those of MRK2, Relaxation-MRK2, and IDT-MRK2 have $\dt=20\times \dt_{RK}$.
\footnote{
  RK2 with $\dt_{RK}=1.25\times 10^{-6}$ leads to blowup of its numerical solution.
}
Figure \figref{pde-burgers-ec-totalenergyhistory-mrk2} shows the time histories of the total energy and its difference 
for the RK2, MRK2, Relaxation-RK2, Relaxation-MRK2, IDT-RK2, and IDT-MRK2 methods. 
We see that both the relaxation and the IDT methods preserve the total energy during the simulation. 
The difference between the total energy for the relaxation and the IDT methods is around $\mc{O}(10^{-13})$, 
whereas the standard RK2 and MRK2 counterparts increase to $\mc{O}(10^{-1})$. 
\begin{figure} 
  \centering
  \begin{subfigure}[Total Energy History (EC)]{0.45\linewidth}
      \includegraphics[trim=0.2cm 0.2cm 0.2cm 0.2cm, width=0.95\columnwidth]{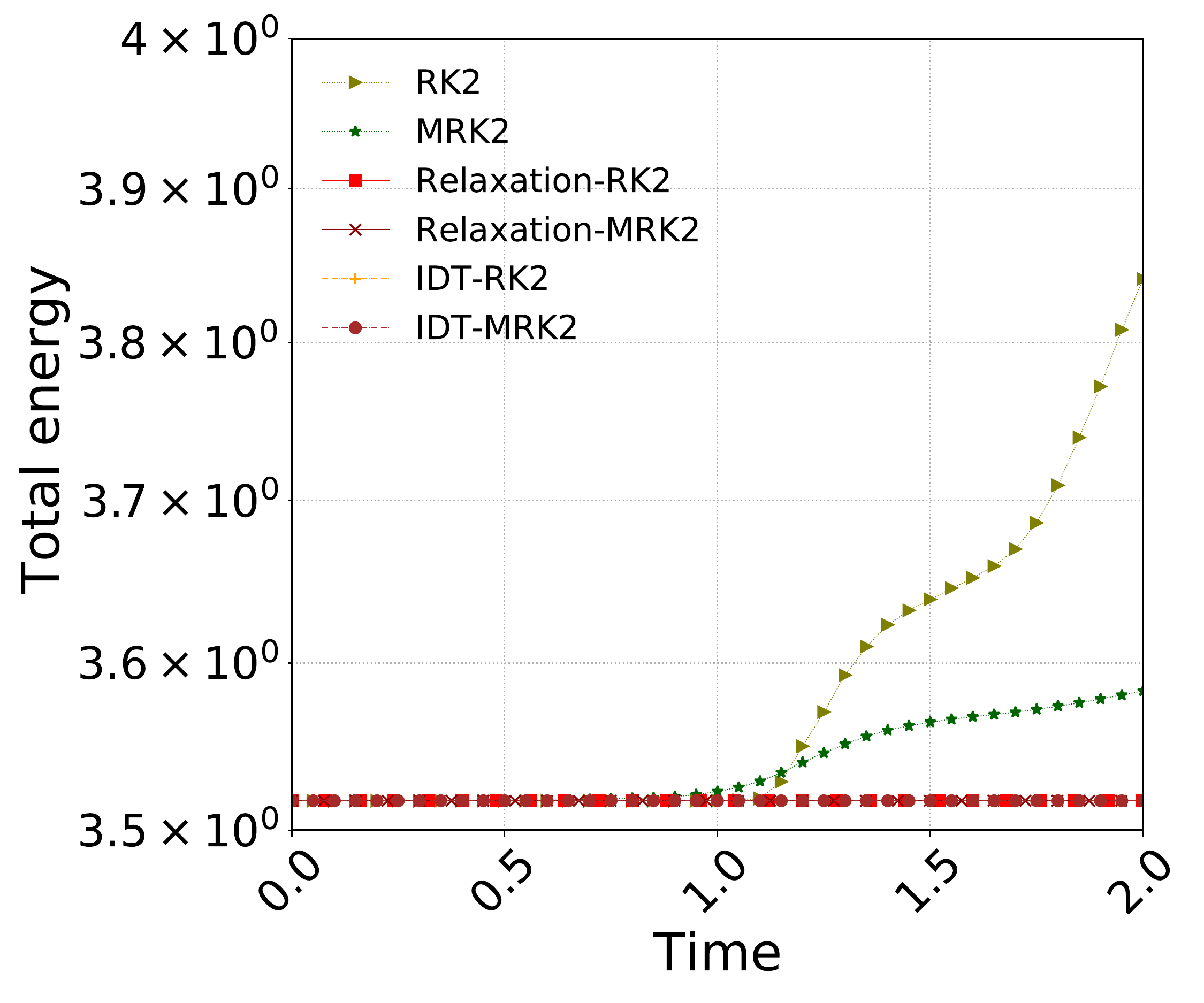}
      \caption{Total Energy History (EC)}
      \figlab{pde-burgers-ec-energyloss-mrk2}
  \end{subfigure} %
  \begin{subfigure}[Total energy difference (EC)]{0.45\linewidth}    
      \includegraphics[trim=0.2cm 0.2cm 0.2cm 0.2cm, width=0.95\columnwidth]{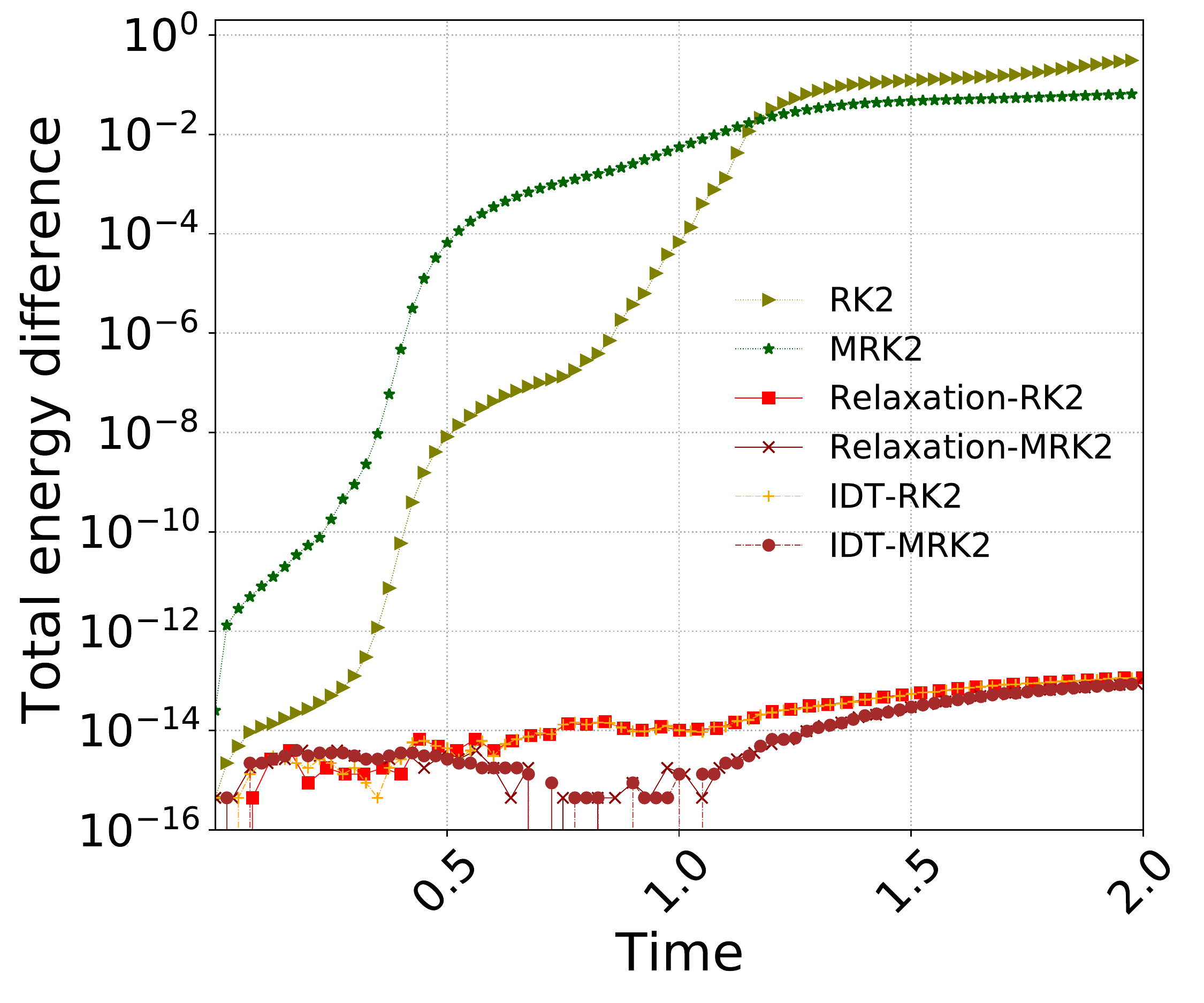}
      \caption{Total energy difference (EC)}
      \figlab{pde-burgers-ec-energyhistory-mrk2}
  \end{subfigure} 
  \caption{Histories of total energy and its difference of Gaussian example for the Burgers equation with EC flux: 
      the relaxation methods with EC flux conserve the total energy within $\mc{O}(10^{-13})$.  }
   \figlab{pde-burgers-ec-totalenergyhistory-mrk2}
\end{figure}
We also show the snapshots at $t=1$ in Figure \figref{pde-burgers-ec-ss-mrk2-comparison}.
As expected, high oscillatory noises are observed, 
but numerical solutions are still stable.

\begin{figure} 
  \centering  
  \begin{subfigure}[snapshot]{0.45\linewidth}
    \includegraphics[trim=1.5cm 1.0cm 1.5cm 1.0cm, width=0.9\columnwidth]{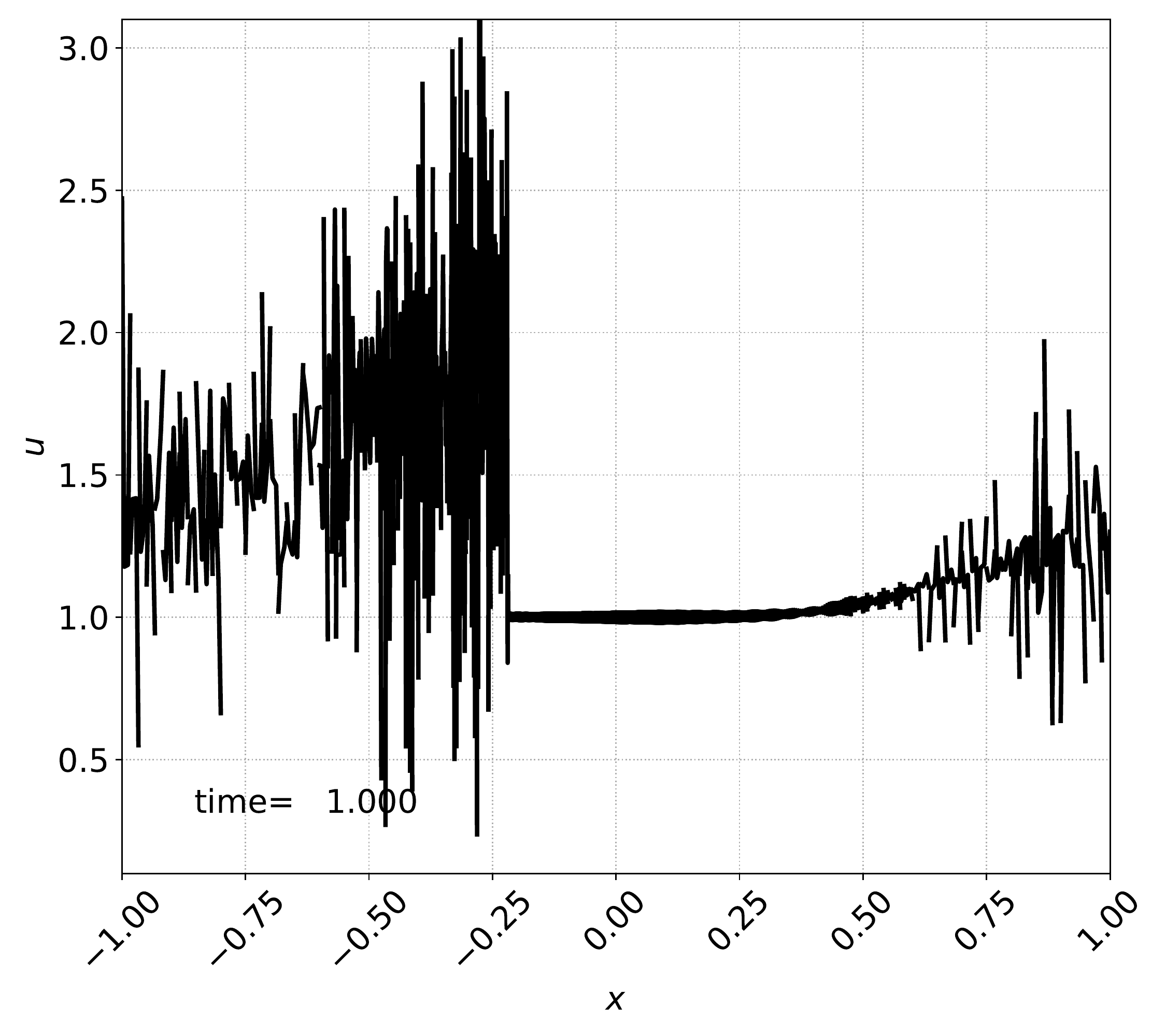}
    \caption{RK2 }
    \figlab{burgers-ec-t1-rk2}
  \end{subfigure} %
  \begin{subfigure}[snapshot]{0.45\linewidth}
    \includegraphics[trim=1.5cm 1.0cm 1.5cm 1.0cm, width=0.9\columnwidth]{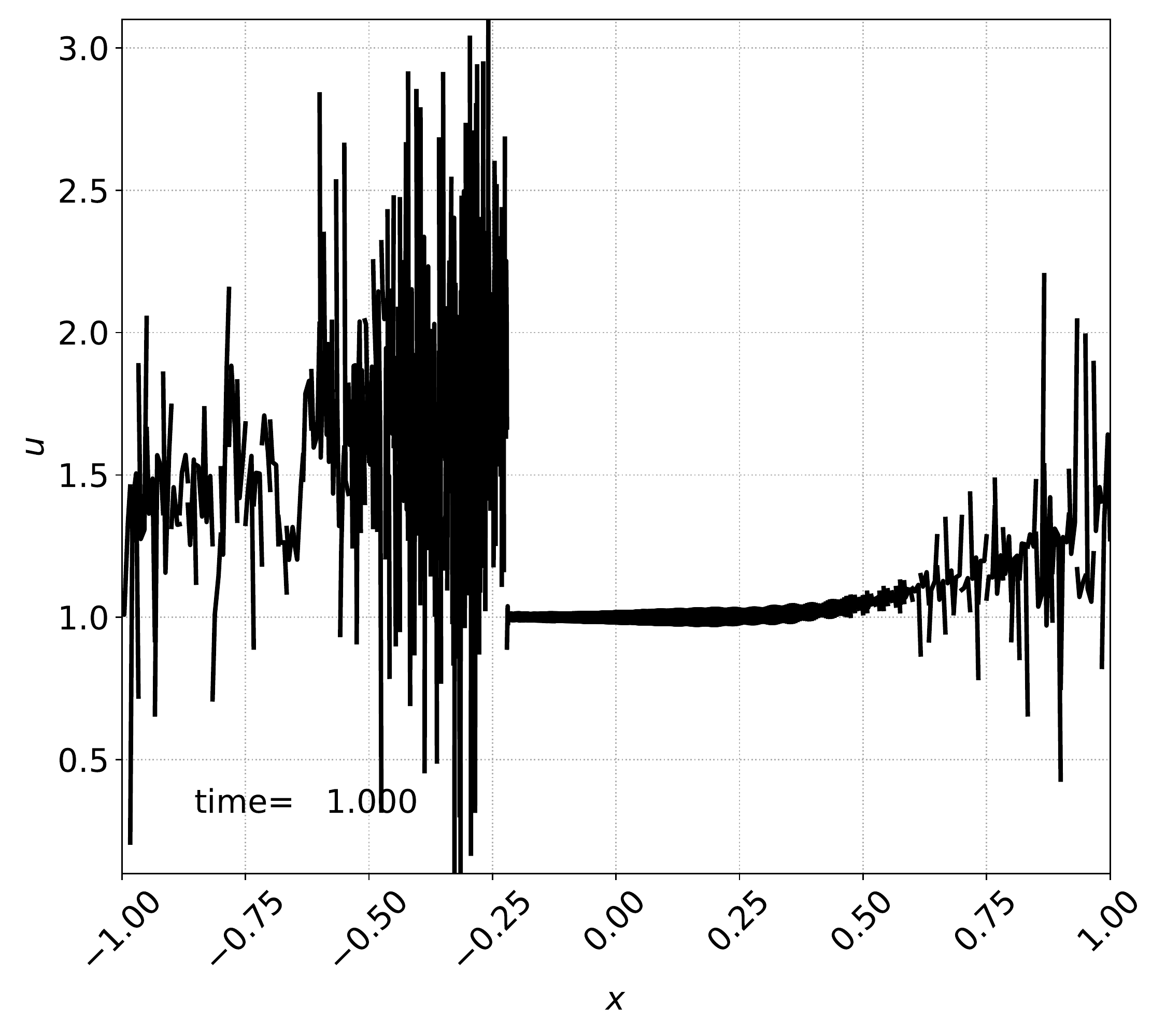}
    \caption{MRK2}
    \figlab{burgers-ec-t1-mrk2}
  \end{subfigure} 
  \begin{subfigure}[snapshot]{0.45\linewidth}
    \includegraphics[trim=1.5cm 1.0cm 1.5cm 0.0cm, width=0.9\columnwidth]{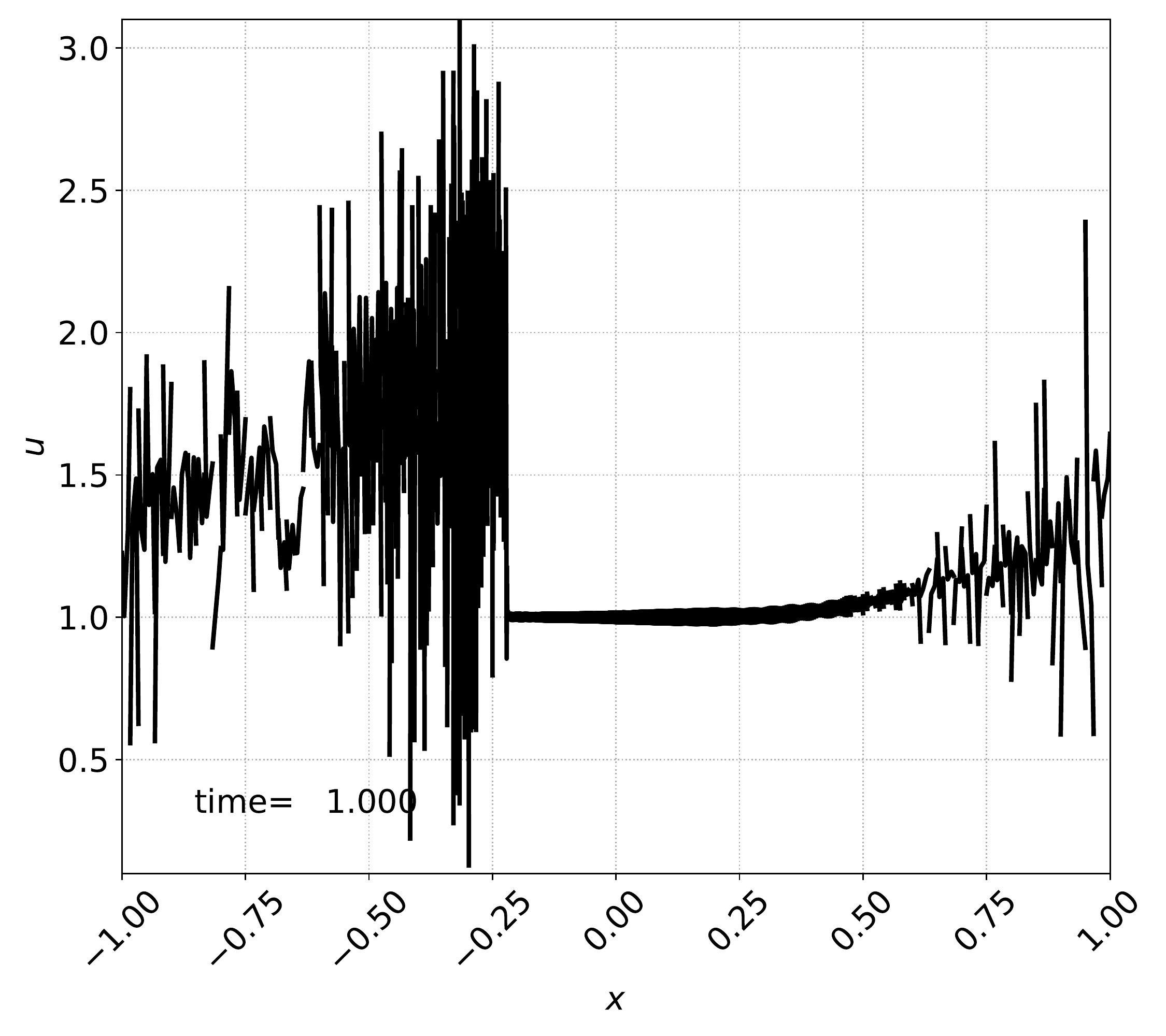}
    \caption{Relaxation-MRK2}
    \figlab{burgers-ec-t1-relaxmrk2}
  \end{subfigure} 
  \begin{subfigure}[snapshot]{0.45\linewidth}
    \includegraphics[trim=1.5cm 1.0cm 1.5cm 0.0cm, width=0.9\columnwidth]{figures/burgers-ec-pde-r-010-relaxmrk2}
    \caption{IDT-MRK2}
    \figlab{burgers-ec-t1-relaxmrk2-idt}
  \end{subfigure} 
  \caption{Snapshots of Gaussian profile for the Burgers equation at $t=1$ on a nonuniform mesh with energy-conserving (EC) flux for (a) RK2, (b) MRK2, (c) Relaxation-MRK2, and (d) IDT-MRK2.
  The time step size of RK2 is taken as $\dt_{RK}=6.25 \times 10^{-6}$, 
  whereas those of MRK2, Relaxation-MRK2, and IDT-MRK2 have $\dt=20\times \dt_{RK}$.
  The domain is discretized with a nonuniform mesh of $N=3$ and $K=784$ ($L_{\max}=5$). 
  }
  \figlab{pde-burgers-ec-ss-mrk2-comparison}
\end{figure}

Now we examine the entropy stability of MRK2 methods with ES flux. 
We perform the simulations for $t\in \LRs{0,2}$ with $N=3$ and $K=784$ ($L_{\max}=5$). 
The time step size of RK2 is taken as $\dt_{RK}=5 \times 10^{-5}$, 
whereas the time step sizes of MRK2, Relaxation-MRK2, and IDT-MRK2 have $\dt=25\times \dt_{RK}$.
\footnote{
  RK2 with $\dt_{RK}=6.25\times 10^{-5}$ yields a blowup solution.
}
The snapshots at $t=1$ are reported in Figure \figref{pde-burgers-lf-ss-mrk2-comparison}.
Similar to Figure \figref{pde-burgers-lf-ss-ark-comparison}, 
the IDT method suffers from phase errors. 
The shock front of IDT-MRK2 is slightly lagged behind, and 
the error becomes severe when the limiter is applied.  
This example demonstrates that the relaxation approach 
is better than the IDT approach in terms of accuracy, especially when the limiter is applied. 
 

\begin{figure} 
  \centering  
  \begin{subfigure}[snapshot]{0.45\linewidth}
    \includegraphics[trim=1.5cm 1.0cm 1.5cm 1.0cm, width=0.9\columnwidth]{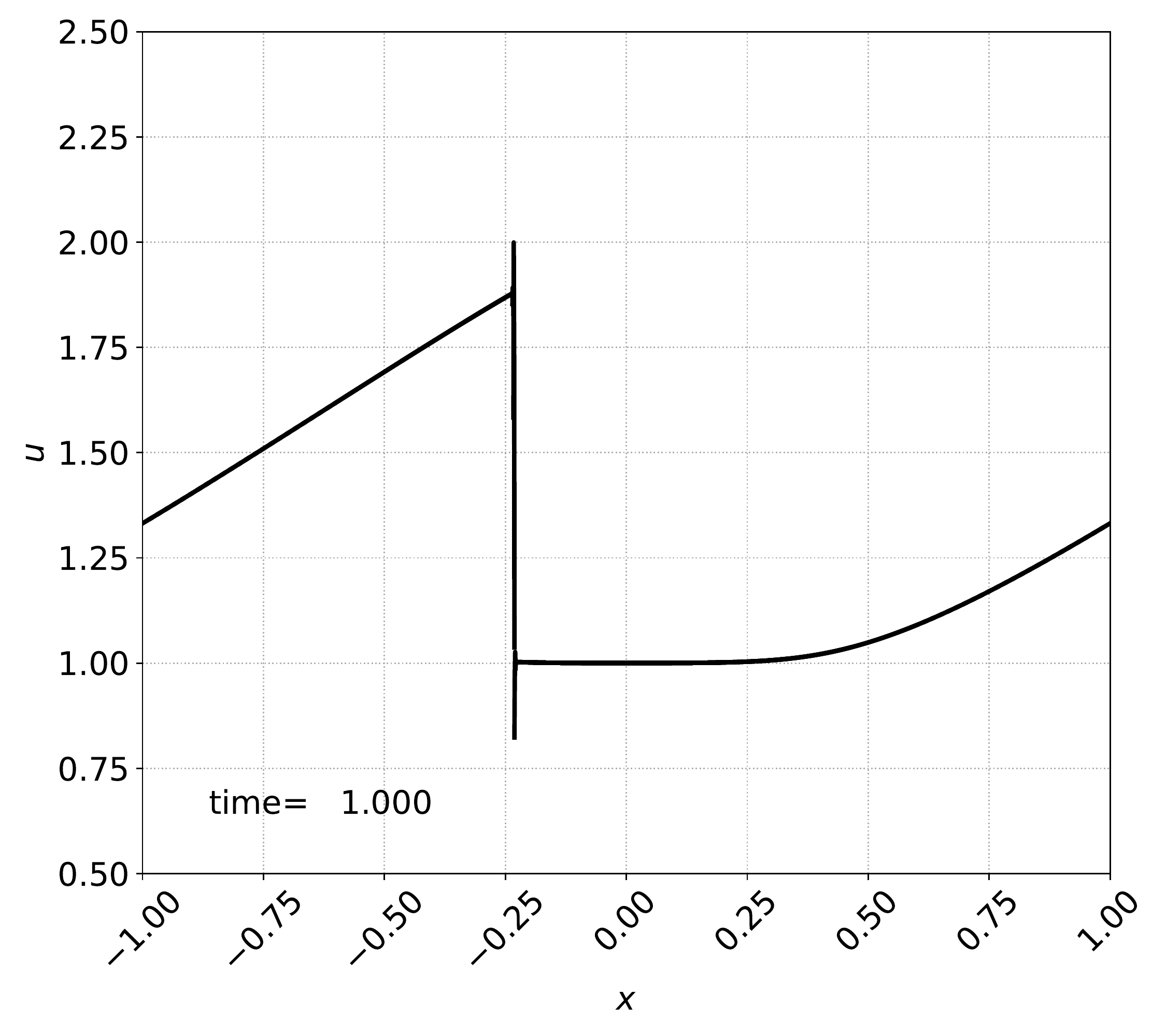}
    \caption{RK2 }
    \figlab{burgers-lf-t1-rk2}
  \end{subfigure} %
  \begin{subfigure}[snapshot]{0.45\linewidth}
    \includegraphics[trim=1.5cm 1.0cm 1.5cm 1.0cm, width=0.9\columnwidth]{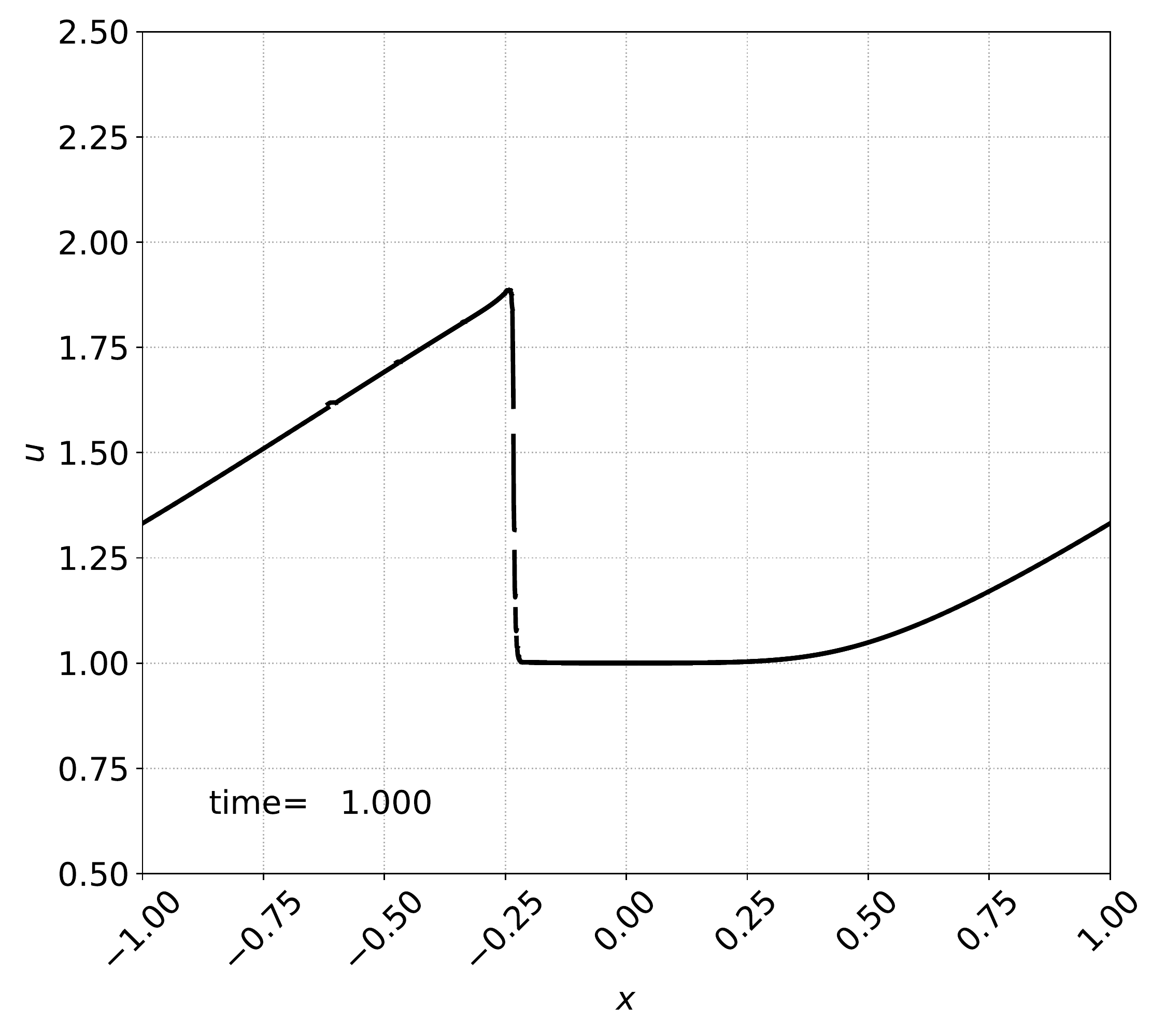}
    \caption{RK2 w/ limiter}
    \figlab{burgers-lf-t1-rk2-cs17}
  \end{subfigure} 
  \begin{subfigure}[snapshot]{0.45\linewidth}
    \includegraphics[trim=1.5cm 1.0cm 1.5cm 0.0cm, width=0.9\columnwidth]{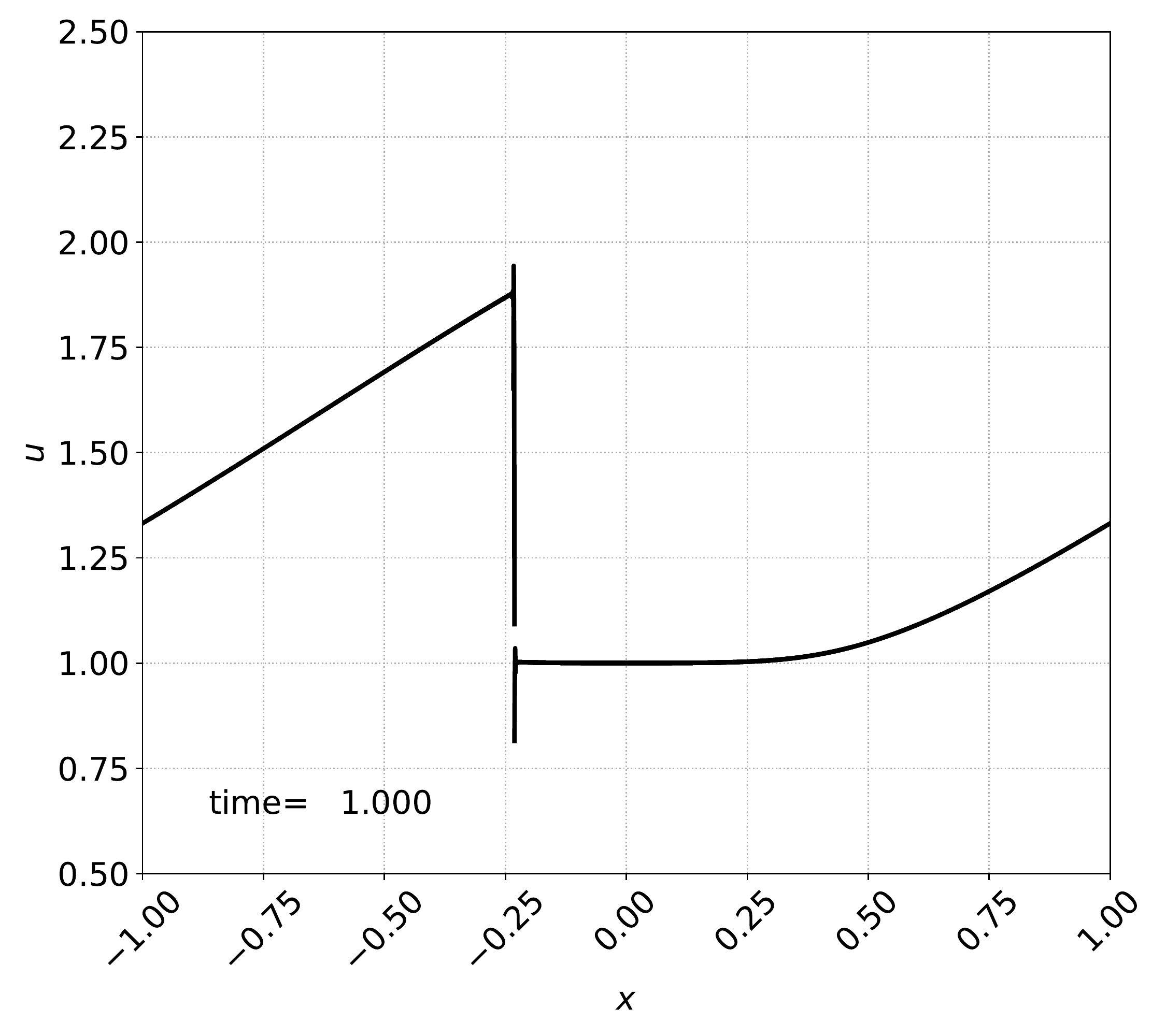}
    \caption{MRK2}
    \figlab{burgers-lf-t1-mrk2}
  \end{subfigure} 
  \begin{subfigure}[snapshot]{0.45\linewidth}
    \includegraphics[trim=1.5cm 1.0cm 1.5cm 0.0cm, width=0.9\columnwidth]{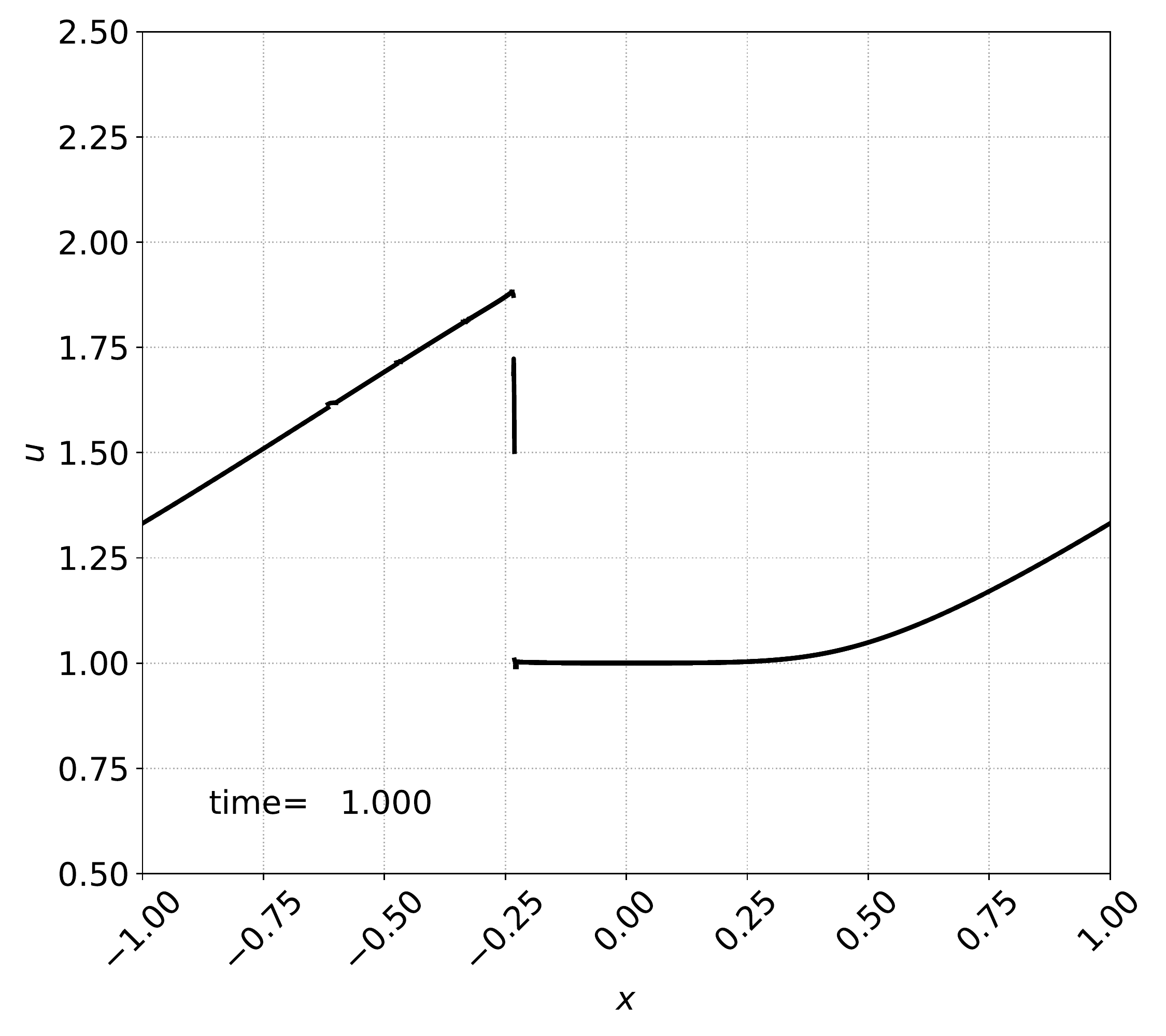}
    \caption{MRK2 w/ limiter}
    \figlab{burgers-lf-t1-mrk2-cs17}
  \end{subfigure} 
  \begin{subfigure}[snapshot]{0.45\linewidth}
    \includegraphics[trim=1.5cm 1.0cm 1.5cm 0.0cm, width=0.9\columnwidth]{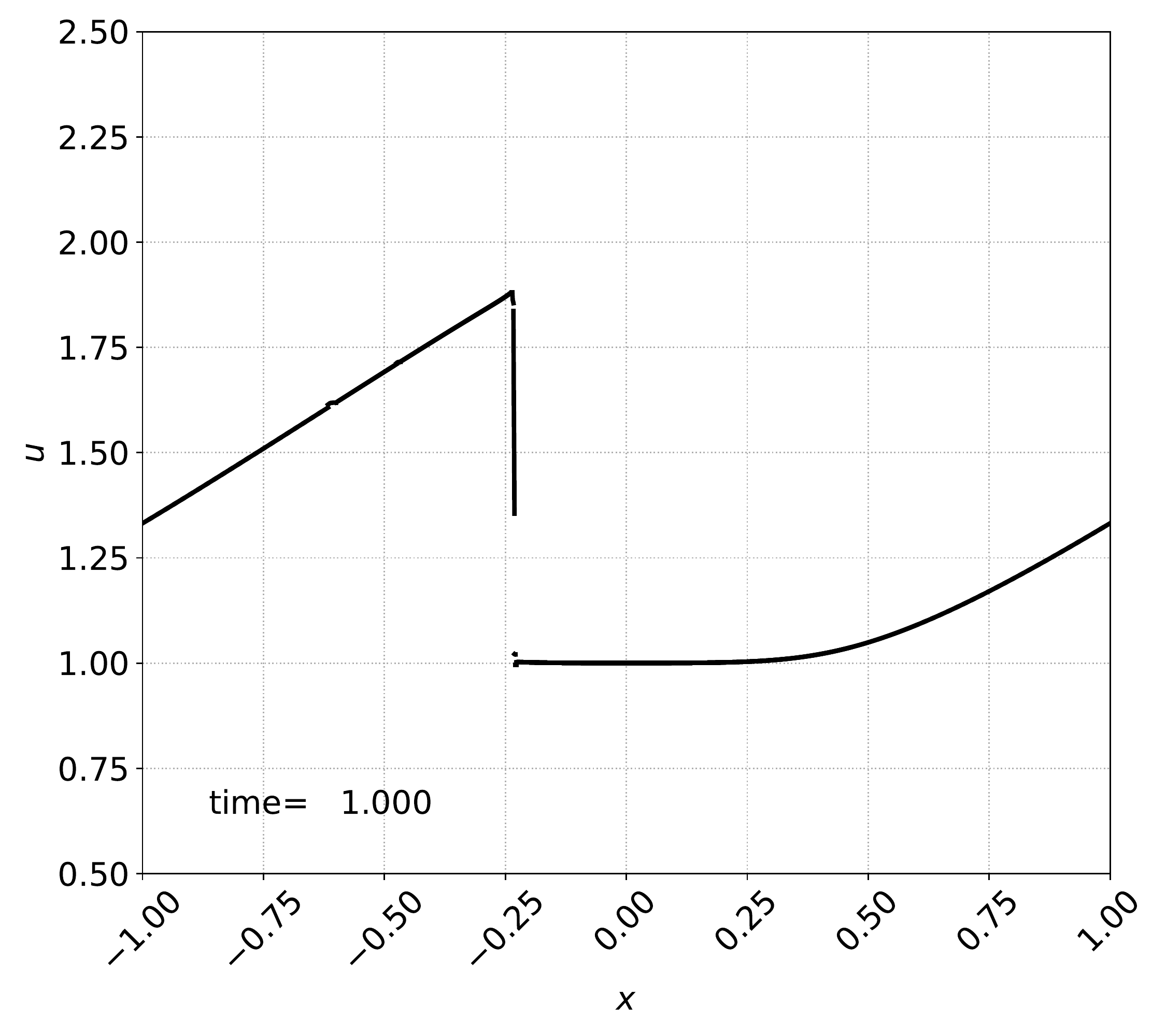}
    \caption{Relaxation-MRK2}
    \figlab{burgers-lf-t1-relaxed-mrk2}
  \end{subfigure} 
  \begin{subfigure}[snapshot]{0.45\linewidth}
    \includegraphics[trim=1.5cm 1.0cm 1.5cm 0.0cm, width=0.9\columnwidth]{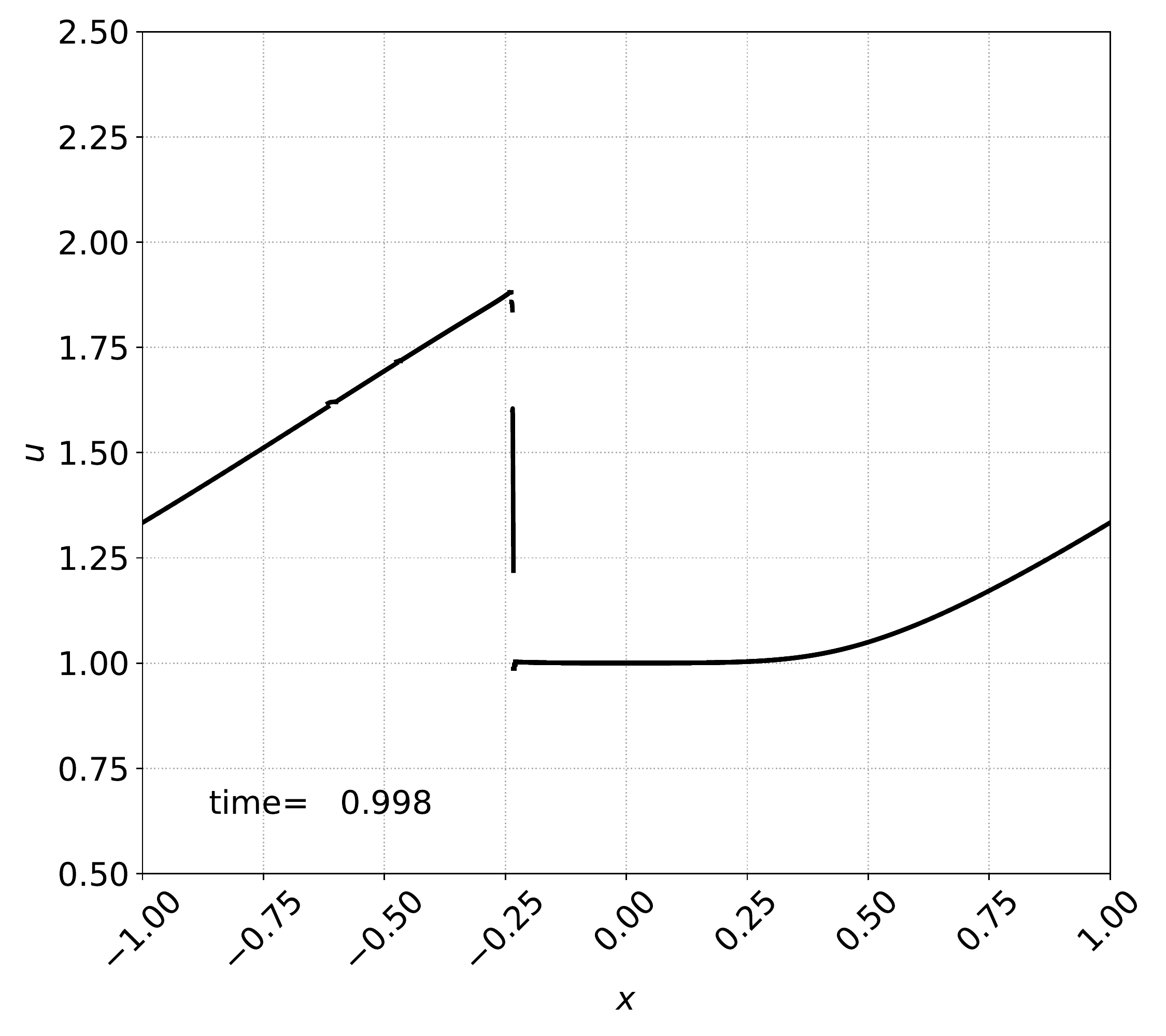}
    \caption{Relaxation-MRK2 w/ limiter}
    \figlab{burgers-lf-t1-relaxed-mrk2-cs17}
  \end{subfigure} 
  \begin{subfigure}[snapshot]{0.45\linewidth}
    \includegraphics[trim=1.5cm 1.0cm 1.5cm 0.0cm, width=0.9\columnwidth]{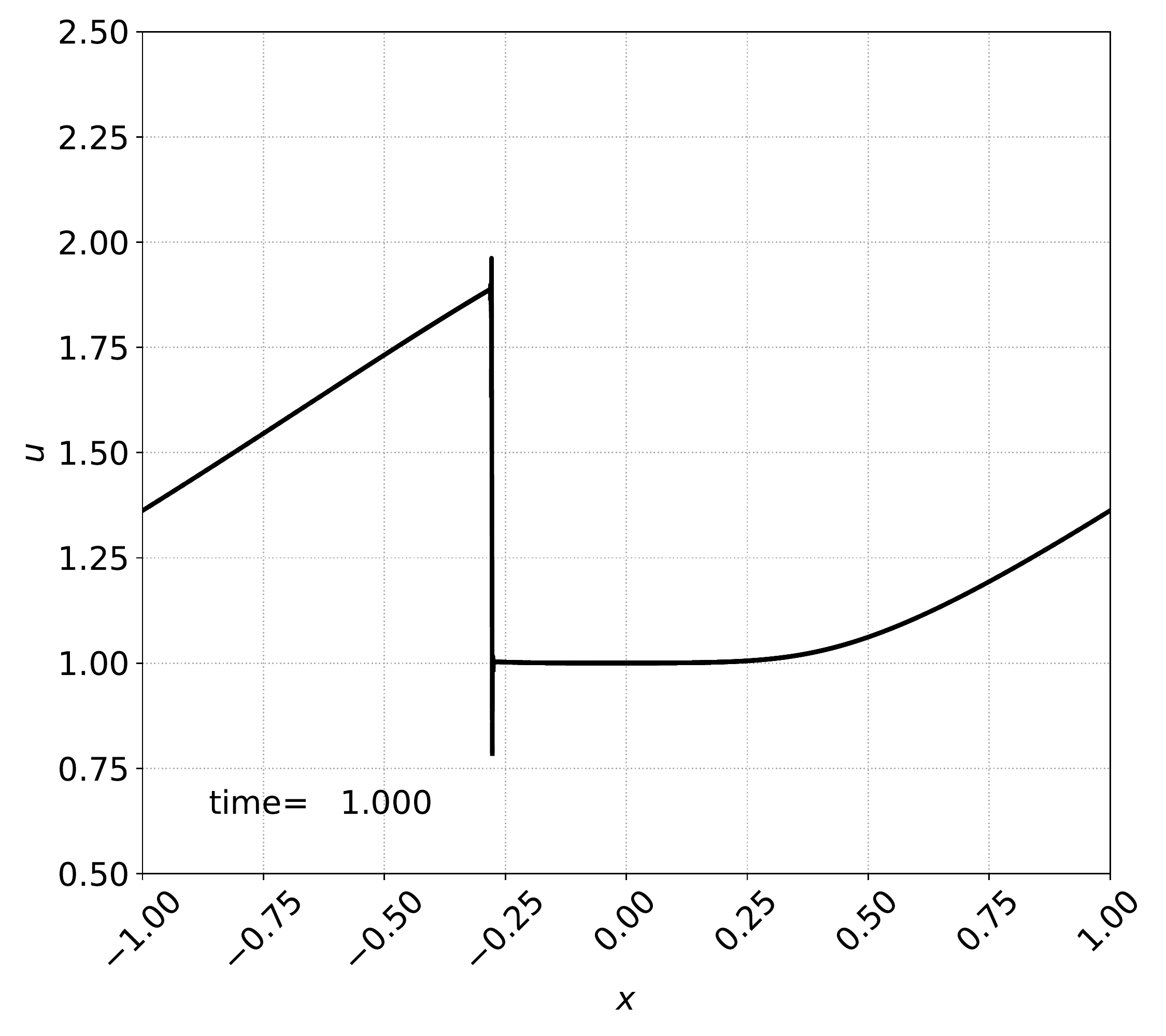}
    \caption{IDT-MRK2}
    \figlab{burgers-lf-t1-relaxed-mrk2-idt}
  \end{subfigure} 
  \begin{subfigure}[snapshot]{0.45\linewidth}
    \includegraphics[trim=1.5cm 1.0cm 1.5cm 0.0cm, width=0.9\columnwidth]{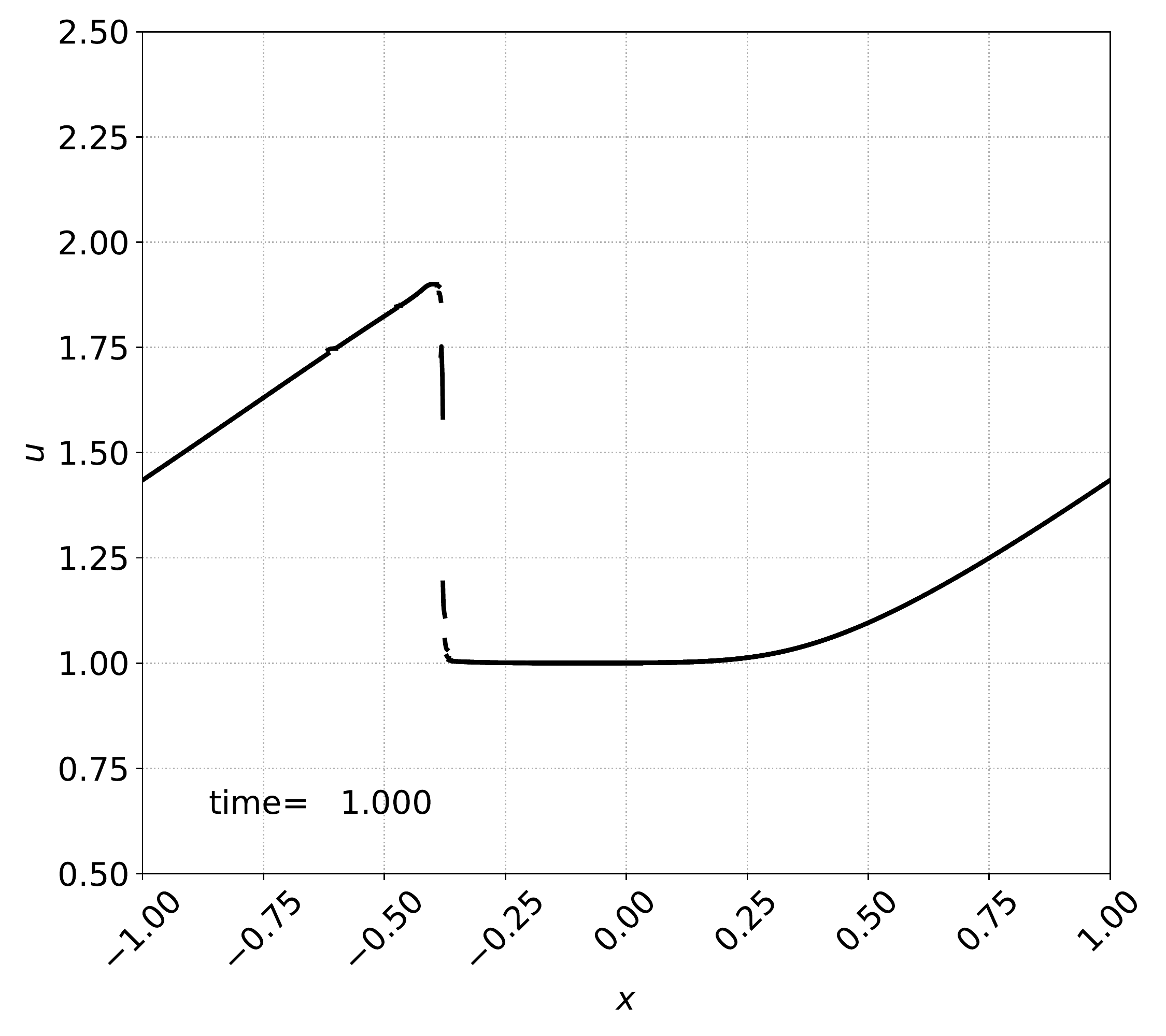}
    \caption{IDT-MRK2 w/ limiter}
    \figlab{burgers-lf-t1-relaxed-mrk2-cs17-idt}
  \end{subfigure} 
  \caption{Snapshots of Gaussian profile for the Burgers equation at $t=1$ on a nonuniform mesh with entropy-stable (ES) flux 
  for (a) RK2, (b) RK2 with limiter,
   (c) MRK2, (d) MRK2 with limiter, 
   (e) Relaxation-MRK2, (f) Relaxation-MRK2 with limiter, 
   (g) IDT-MRK2, and (h) IDT-MRK2 with limiter.
  The time step size of RK2 is taken as $\dt_{RK}=5 \times 10^{-5}$, 
  whereas those of MRK2, Relaxation-MRK2, and IDT-MRK2 have $\dt=25\times \dt_{RK}$.
  The domain is discretized with a nonuniform mesh of $N=3$ and $K=784$ ($L_{\max}=5$). 
  }
  \figlab{pde-burgers-lf-ss-mrk2-comparison}
\end{figure}
Figure \figref{pde-burgers-lf-totalenergyhistory-mrk2} shows the time histories of the total entropy and its difference
for the RK2, MRK2, Relaxation-RK2, Relaxation-MRK2, IDT-RK2, and IDT-MRK2 methods 
with/without the limiter. 
All the methods show entropy-stable behaviors.
The entropy differences of all methods reach $\mc{O}(10^{-1})$ as time passes. 
\begin{figure} 
  \centering
  \begin{subfigure}[Total energy history (ES)]{0.45\linewidth}
      \includegraphics[trim=0.2cm 0.2cm 0.2cm 0.2cm, width=0.95\columnwidth]{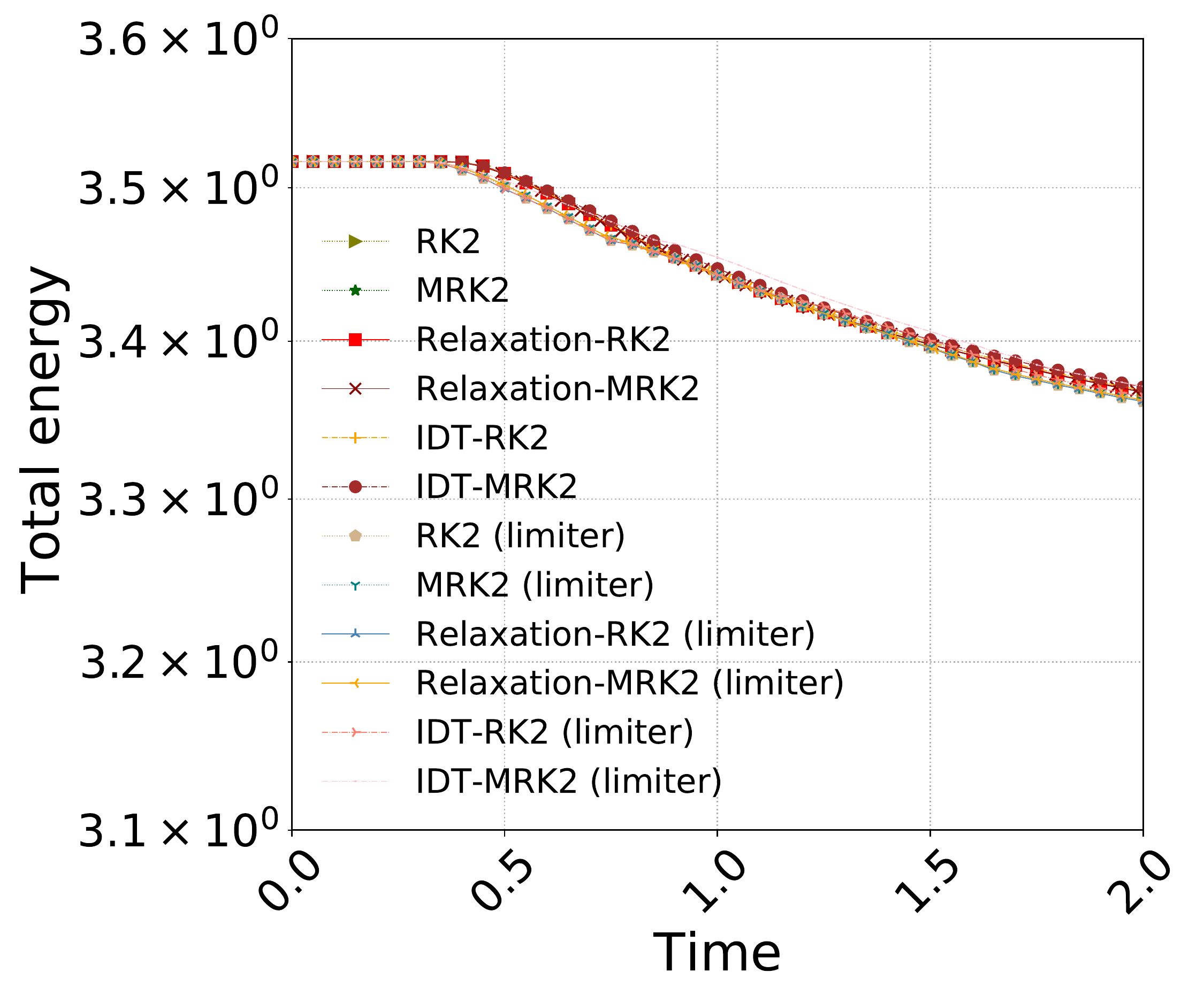}
      \caption{Total energy history (ES)}
      \figlab{pde-burgers-lf-energyloss-mrk2}
  \end{subfigure} %
  \begin{subfigure}[Total energy difference (ES)]{0.45\linewidth}    
      \includegraphics[trim=0.2cm 0.2cm 0.2cm 0.2cm, width=0.95\columnwidth]{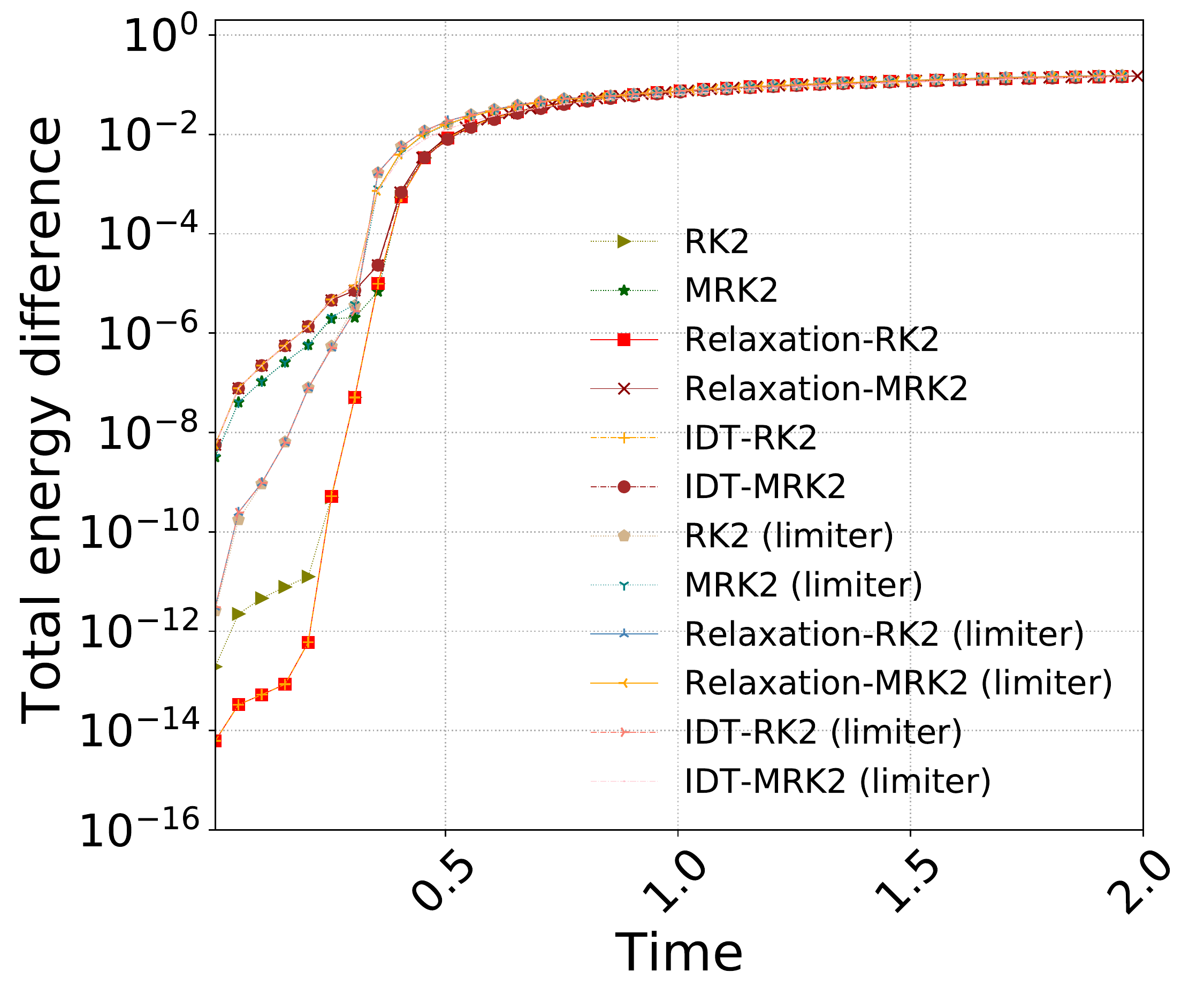}
      \caption{Total energy difference (ES)}
      \figlab{pde-burgers-lf-energyhistory-mrk2}
  \end{subfigure} 
  \caption{Histories of total entropy and its difference of Gaussian example for the Burgers equation with ES flux: 
      all the methods with ES flux show entropy-stable behaviors.
       }
   \figlab{pde-burgers-lf-totalenergyhistory-mrk2}
\end{figure}
In Figure \figref{pde-burgers-lf-totamassloss-mrk2} the time history of the total mass difference is shown 
for the RK2, MRK2, Relaxation-RK2, Relaxation-MRK2, IDT-RK2, and IDT-MRK2 methods with/without the limiter. In general, all the methods demonstrate good total mass conservation. 
In particular, without the limiter, all the methods preserve the total mass within $\mc{O}(10^{-14})$ error.
With the limiter, however, the total mass difference is bounded 
by $\mc{O}(10^{-12})$ for the RK2, Relaxation-RK2, and IDT-RK2 methods 
and by $\mc{O}(10^{-13})$ for the MRK2, Relaxation-MRK2, and IDT-MRK2 methods. 

\begin{figure} 
  \centering
  \includegraphics[trim=0.2cm 0.2cm 0.2cm 0.2cm, width=0.95\columnwidth]{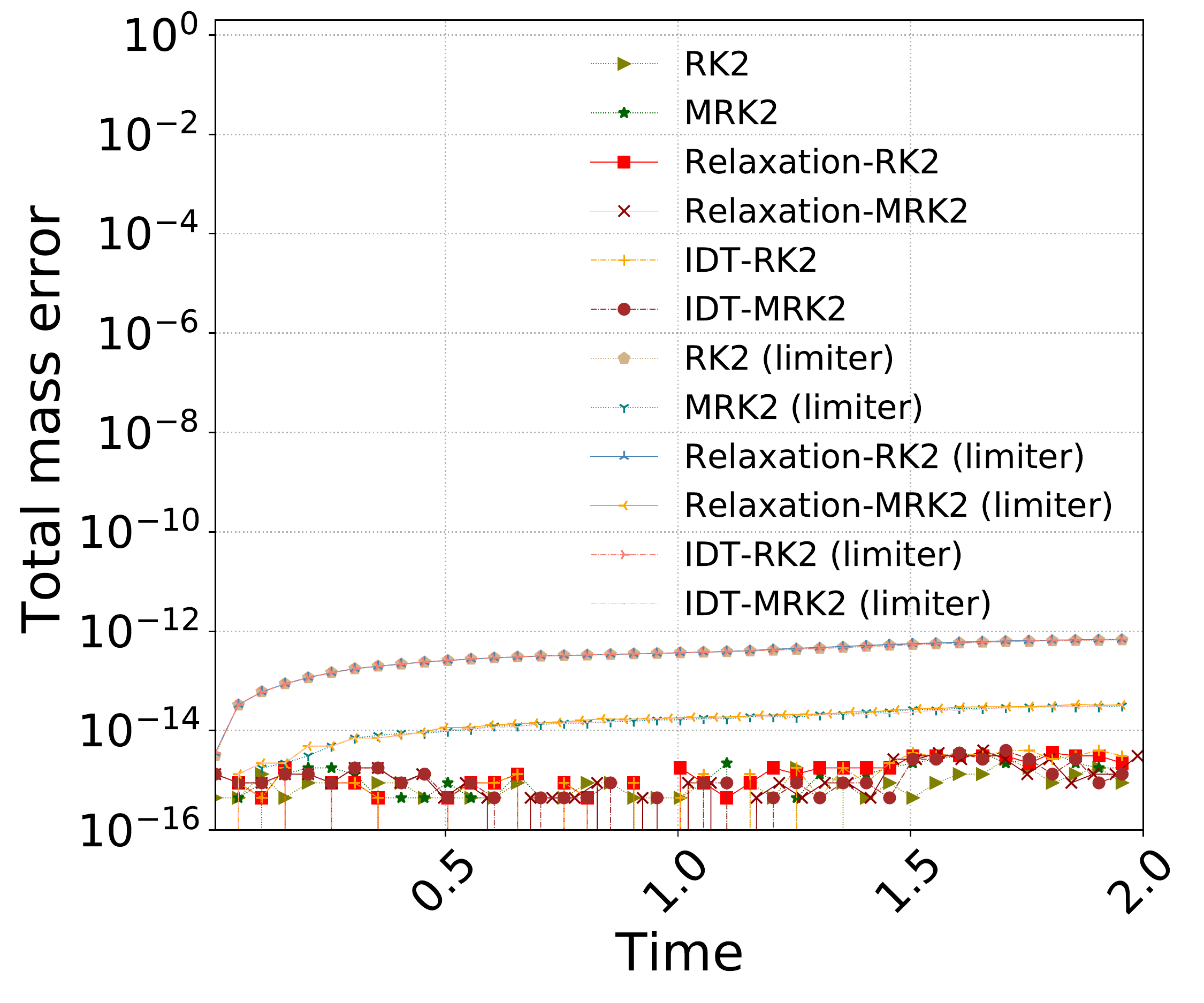}
  \caption{Histories of total mass difference of Gaussian example for the Burgers equation with ES flux: 
  without the limiter, the methods preserve the total mass within $\mc{O}(10^{-14})$.
However, the limiting procedure somehow affects the mass difference quantity, but the methods are bounded 
by $\mc{O}(10^{-12})$}
  \figlab{pde-burgers-lf-totamassloss-mrk2}
\end{figure}

  We note that the relaxation approach in \eqnref{mrk2-multilevel} is ``global.'' The entropy conservation/stability in time is imposed only at the synchronization time for all steps, which corresponds to the coarsest time level. If numerical instability occurs during the stage integration of the multirate method,
then the instability can lead to unstable numerical solutions. 
For this reason, when a shock occurs, we recommend 
using entropy-stable flux rather than entropy-conserving flux  
because the diffusive penalty term in entropy-stable flux 
helps  mitigate the numerical instability. 
Indeed, we numerically observed that the relaxation approach is stable with entropy-stable flux on deeply  nested mesh refinement.
We perform a numerical simulation for $t\in [0,1]$ with $\dt=0.002$.
The computational domain is non-uniformly refined with $L_{\max}=10$, $N=3$, and $N_E=3099$. 
Figure \figref{pde-burgers-lev10-lf-ss-mrk2} shows the snapshot of Gaussian example at $t=1$ without the limiter.
The shock front is highly resolved thanks to the fine resolution, and hence sharp spikes at the shock front are reduced, 
compared with Figure \figref{burgers-lf-t1-relaxed-mrk2}. 
%

\begin{figure} 
  \centering  
  \begin{subfigure}[snapshot]{0.45\linewidth}
    \includegraphics[trim=0cm 1.0cm 0cm 0.5cm, width=0.9\columnwidth]{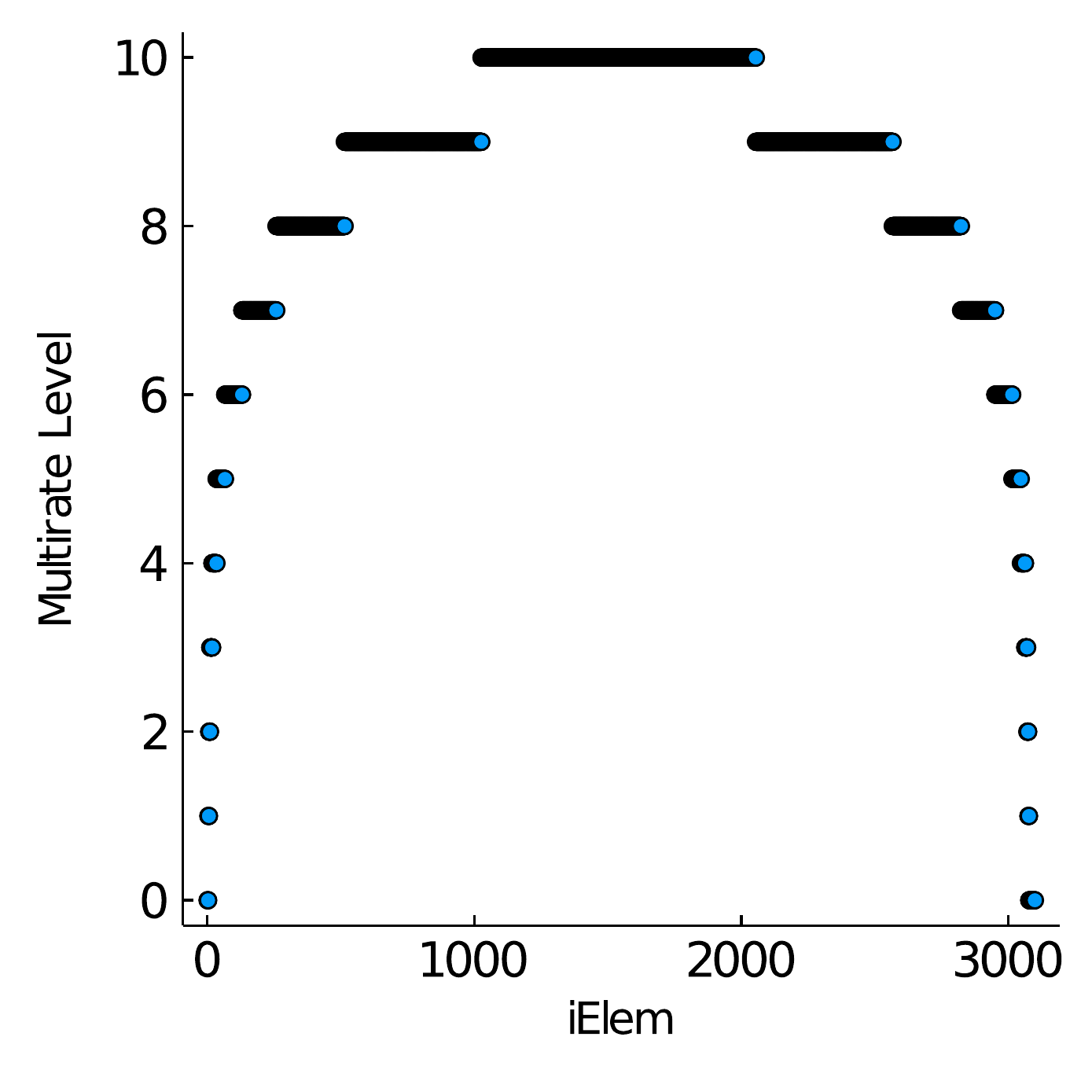}
    \caption{MR Levels}
    \figlab{burgers-lev10-mr}
  \end{subfigure} %
  \begin{subfigure}[snapshot]{0.45\linewidth}
    \includegraphics[trim=1.5cm 1.0cm 1.5cm 1.0cm, width=0.9\columnwidth]{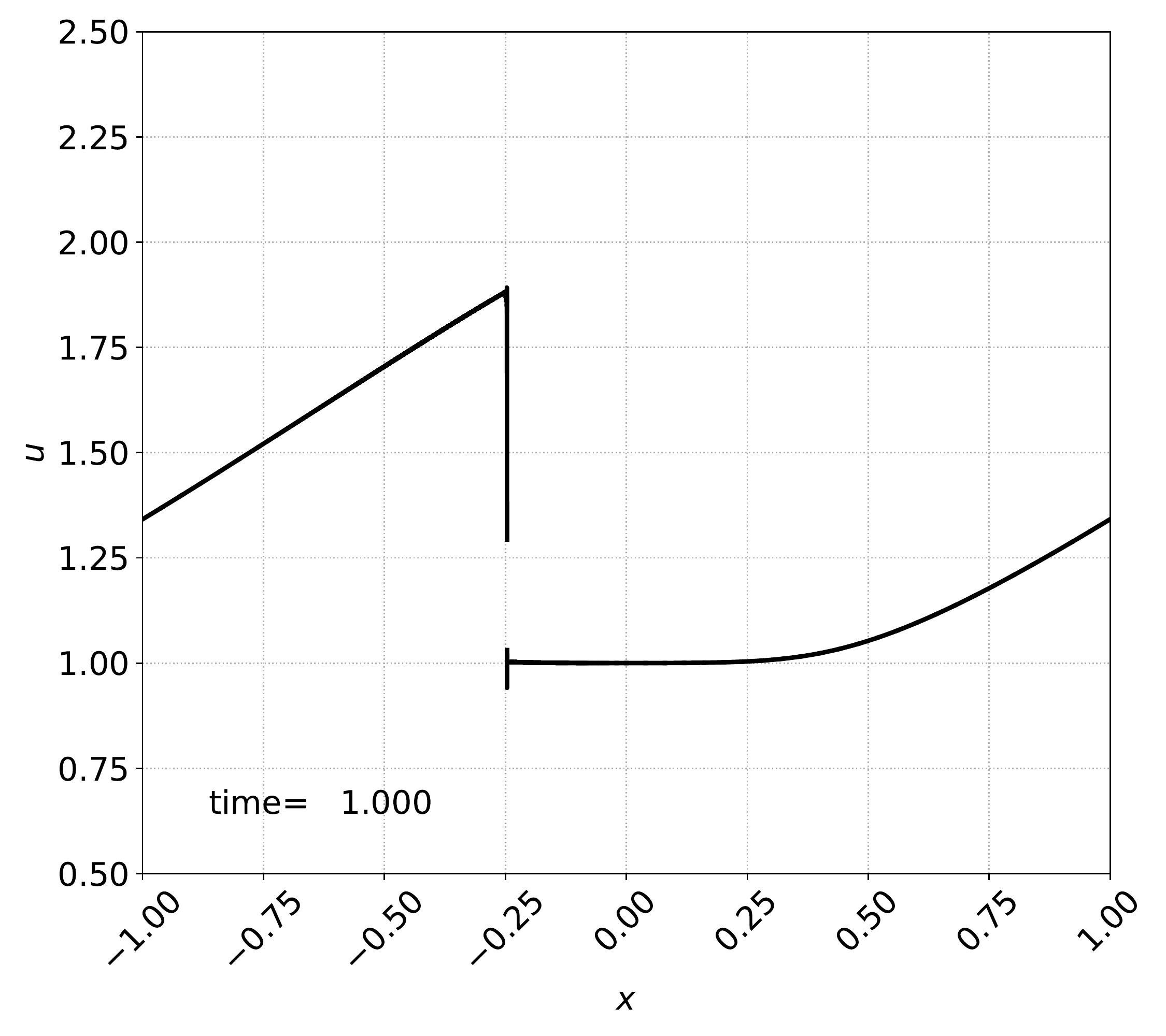}
    \caption{Relaxation-MRK2}
    \figlab{burgers-lev10-relaxation-mrk2}
  \end{subfigure} 
  \caption{(a) Multirate levels of elements for $L_{\max}=10$ 
  and (b) snapshot of Gaussian profile for the Burgers equation at $t=1$ on a nonuniform mesh with entropy-stable (ES) flux 
  for Relaxation-MRK2.
  The domain is discretized with a nonuniform mesh of $N=3$ and $K=3099$. 
  }
  \figlab{pde-burgers-lev10-lf-ss-mrk2}
\end{figure}

\section{Conclusions}
\seclab{Conclusion}

In this paper we present entropy-preserving/stable time discretization methods for partitioned Runge--Kutta schemes. 
Our work is an extension of the explicit relaxation Runge--Kutta methods \cite{ketcheson2019relaxation,ranocha2020general} 
to partitioned Runge--Kutta methods.  
In particular, we use the relaxation method to IMEX--RK methods and to a class of explicit second-order multirate methods.
IMEX-RK methods allow for a longer time step size than that restricted by explicit methods by defining the linearized flux containing the fast wave in the system
 with the stiffness being implicitly treated. Multirate methods decompose the original problem into subproblems,
 where different time step sizes can be used locally on each subproblem. 
 Unlike IMEX-RK methods, multirate methods do not require any linear/nonlinear solve and, hence, are attractive for parallel computing if proper preconditioning is not available. 
In combination with entropy conservation/stable spatial discretization, the proposed method successfully demonstrates
the entropy conservation and stability properties 
for a few ODEs and the Burgers equation. 

We numerically found that Relaxation-ARK approaches provide high-order accuracy in time, 
whereas the Relaxation-MRK2 method has a second-order rate of convergence, as expected. 
We also observed that the relaxation approach is one degree more accurate
 than the incremental direction technique when enough temporal errors have accumulated. 
 The location error of the incremental direction technique is larger than the relaxation strategy, especially in the presence of shocks. 
 When the limiter is used, the inaccuracy becomes substantially worse.
However, regardless of whether or not the limiter is applied, 
all the Relaxation-ARK, Relaxation-MRK2, IDT-ARK, IDT-MRK2 methods
show entropy-conserving/stable behavior for the Burgers equation.



The key idea of the relaxation method is to adjust the step completion with the relaxation parameter so that 
the time-adjusted solution satisfies entropy conservation and stability properties. 
The relaxation parameter is computed by solving a scalar nonlinear equation in general at each timestep; 
but, as for energy entropy, the relaxation parameter can be determined explicitly.
We theoretically provided the explicit forms of the relaxation parameters for IMEX-RK methods and the multirate methods 
and numerically verified that the explicit relaxation parameters work for the Burgers equation. 

We note that entropy conservation/stability in time is guaranteed only at the coarsest time level.
Numerical solutions may become unstable if numerical instability arises during the stage integration of the IMEX or multirate methods.
Because of the implicit correction step at each stage,
Relaxation-ARK approaches can reduce numerical instability. 
Relaxation-MRK2, on the other hand, lacks the ability to manage instability during stage integration. 
Therefore, with Relaxation-MRK2, 
entropy-stable flux is preferred above entropy-conserving flux, 
especially on deep-nested mesh refinement.
We showed that Relaxation-MRK2 with entropy-stable flux performs well on the deep-nested mesh refinement (with 10 levels).

To exploit more sophisticated problems, 
we will focus our future work  on extension to multidimensions 
as well as additional partial differential equations, such as Euler equations.
Working on entropy-conserving/entropy-stable coupling techniques for multiphysics problems is also interesting.

\appendix

\section*{Acknowledgments}
This material is based upon  work  supported by the U.S. Department of Energy, Office of Science, Office of Advanced Scientific Computing Research (ASCR) and Office of Biological and Environmental Research (BER), Scientific Discovery through Advanced Computing (SciDAC) program under Contract DE-AC02-06CH11357 
through the Coupling Approaches for Next-Generation Architectures (CANGA) Project and ASCR Base Program.

\section*{Declaration}


\subsection*{Availability of data and material}





 
The datasets generated during and/or analyzed during the current study are available from the corresponding author on reasonable request.

\subsection*{Code availability}
The code used to generate the results is available from the corresponding author on reasonable request.

\bibliography{main}

 \begin{center}
	\scriptsize \framebox{\parbox{4in}{Government License (will be removed at publication):
			The submitted manuscript has been created by UChicago Argonne, LLC,
			Operator of Argonne National Laboratory (``Argonne").  Argonne, a
			U.S. Department of Energy Office of Science laboratory, is operated
			under Contract No. DE-AC02-06CH11357.  The U.S. Government retains for
			itself, and others acting on its behalf, a paid-up nonexclusive,
			irrevocable worldwide license in said article to reproduce, prepare
			derivative works, distribute copies to the public, and perform
			publicly and display publicly, by or on behalf of the Government.
			The Department of Energy will provide public access to these results of federally sponsored research in accordance with the DOE Public Access Plan.
			http://energy.gov/downloads/doe-public-access-plan.
}}
	\normalsize
\end{center}

\end{document}